\numberwithin{equation}{section}
\newcommand{\eqdef}{\stackrel{\scriptscriptstyle\rm def}{=}}
\begin{document}

\title[Markovian random products]{Stability of the  Markov operator \\
and synchronization\\
of  Markovian random products}

\author[L. J.~D\'\i az]{Lorenzo J. D\'\i az}
\address{Departamento de Matem\'atica PUC-Rio, Marqu\^es de S\~ao Vicente 225, G\'avea, Rio de Janeiro 22451-900, Brazil}
\email{lodiaz@mat.puc-rio.br}

\author[E. Matias]{Edgar Matias}
\address{Instituto de Matem\'atica Universidade Federal do Rio de Janeiro, Av. Athos da Silveira Ramos 149, Cidade Universit\'aria - Ilha do Fund\~ao, Rio de Janeiro 21945-909,  Brazil}
\email{edgarmatias9271@gmail.com}

\begin{abstract}
	We study Markovian random products on a large class of
	``$m$-dimensional'' connected compact metric spaces (including products of closed intervals and trees). We introduce a splitting condition, generalizing the classical one by Dubins and Freedman, 
	 and prove that this condition implies
	the
	 asymptotic stability of the corresponding Markov operator and
	 (exponentially fast) synchronization. \end{abstract}

\begin{thanks}{This paper was partially part of the PhD thesis of EM (PUC-Rio) supported by CAPES.
The authors  thank the hospitality of Centro de Matem\'atica of Univ. of Porto (Portugal) and
IMPAN (Warsaw, Poland)  for their hospitality and 
partial support of 
EU Marie-Curie IRSES ``Brazilian-European partnership in Dynamical
Systems" (FP7-PEOPLE-2012-IRSES 318999 BREUDS). 
LJD is partially supported by CNPq and CNE-Faperj (Brazil).
The authors warmly thank  D. Malicet, K. Gelfert, and A. Zdunik for their useful comments.
}	
\end{thanks}

\keywords{Asymptotic stability, iterated function system, Markov operators,
stationary measures, Markovian random products, synchronization}
\subjclass[2000]{%
37B25, 
37B35, 60J05,  47B80.}

\maketitle
\newtheorem{mtheorem}{Theorem}
\newtheorem{theorem}{\textbf{Theorem}}[section]
\newtheorem{acknowledgement}{Acknowledgement}
\newtheorem{case}{\textbf{Case}}
\newtheorem{claim}[theorem]{\textbf{Claim}}
\newtheorem{conclusion}{\textbf{Conclusion}}
\newtheorem{condition}{\textbf{Condition}}
\newtheorem{corollary}{\textbf{Corollary}}
\newtheorem{criterion}{\textbf{Criterion}}
\newtheorem{definition}[theorem]{\textbf{Definition}}
\newtheorem{example}{\textbf{Example}}
\newtheorem{lemma}[theorem]{\textbf{Lemma}}
\newtheorem{proposition}[theorem]{\textbf{Proposition}}
\newtheorem{remark}[theorem]{\textbf{Remark}}
\newtheorem{solution}{\textbf{Solution}}
\newtheorem{summary}{\textbf{Summary}}
\newtheorem{notation}{\textbf{Notation}}
\newtheorem{question}{\textbf{Question}}

\section{Introduction}
The study of random products of maps has a long history and for independent and 
identically distributed (i.i.d.) random products the first fundamental results go back to Furstenberg \cite{Fur}. In this paper, we study Markovian  random products.
 Given a compact metric space $M$, finitely many continuous maps $f_{i}\colon M \to M$, $i=1,\dots, k$, and a Markov shift $(\Sigma_{k},\mathscr{F},\mathbb{P},\sigma)$ 
(see the precise definition below) the map $\varphi\colon \mathbb{N}\times \Sigma_{k}\times M\to M$ defined by 
\begin{equation} 
\label{e.randomproductmap}
\left\lbrace
\begin{array}{l}
\varphi(n,\omega,x)\eqdef f_{\omega_{n-1}}\circ\dots \circ f_{\omega_{0}}(x)\eqdef f_{\omega}^{n}(x)\quad \text{for}\quad n\geq 1,\\
\varphi(0,\omega,x)\eqdef x
\end{array}\right.
\end{equation}
is called a \emph{random product of maps} over the Markov shift
(or \emph{a Markovian product of random maps}).

The study of random products is of major importance in the study of the asymptotic behavior of Markov chains. This is for example supported by the fact that every homogenous Markov chain admits a certain representation by an i.i.d. random product, see Kifer \cite{Kifer}.   For additional references on Markovian random products see \cite{Recurrent, Edalat, Virtser,Gui,Royer}, and references therein.

Associated to the Markovian random product $\varphi$ there is a family of homogeneous Markov chains
\[
	Z^{x}_{n}(\omega)\eqdef(\omega_{n-1},X_{n}^{x}(\omega)),
	\quad n\geq 1 \text{ and }x\in M,
\]
	 where $X_{n}^{x}$ is the 
random variable $\omega\mapsto f_{\omega}^{n}(x)$.
We study the asymptotic behavior of this family (synchronization) and also the dynamics 
of the corresponding Markov operator.

We will focus on certain ``$m$-dimensional'' compact spaces, which include
products of compact intervals and trees. More precisely, we will assume that the
ambient space is of the form $Y^m=Y \times \cdots \times Y$, where $Y$ is a metric space such that
 every connected non-singleton subset of $Y$ has nonempty interior.
In the particular case of the interval (i.e., $Y=[0,1]$ and $m=1$), the study of products of random maps goes back to Dubins and Freedman \cite{Dubins}, where i.i.d. random products of monotone maps are considered and the asymptotic stability of 
  the corresponding Markov operator is obtained. There they
  assumed a certain splitting condition on the system (for details see Definition~\ref{def1} and the discussion in Remark~\ref{r.discussion}) which we  also invoke. 

In the above context, we will prove that there is a unique stationary measure and, in fact, show the asymptotic stability of the Markov operator (which implies the former), see Theorem~\ref{stable}. 
Let us briefly discuss the results which are our main motivation (see, 
in chronological ordering, \cite{Dubins, Hu, BhLe, Recurrent, Edalat}).
 As observed above, in \cite{Dubins}, assuming a splitting condition, it is proved the asymptotic stability of the Markov operator associated to an i.i.d. random product of injective maps of the closed interval.
 This result was generalized to higher dimensional non-decreasing monotone maps in \cite{BhLe} .
 The asymptotic stability of the Markov operator is studied in \cite{Recurrent} for Markovian random products of maps\footnote{In 
 \cite{Recurrent,Edalat} it is used the terminology {\emph{recurrent I.F.S.}} for a Markovian random products of maps.}  which are contracting in average (this generalizes the results in \cite{Hu} for contractions). More precisely, they show that there exists a unique stationary measure which attracts every probability measure from a  certain ``representative" subspace. Finally, in \cite{Edalat} it is  introduced the notion of
 weak hyperbolicity (i.e., no contraction-like assumptions are involved, see the discussion below) and proved that for every weakly hyperbolic Markovian random product there is a unique stationary measure. With a slight abuse of terminology, we can refer to the settings in \cite{Recurrent,Edalat} as hyperbolic-like ones and observe that our setting is ``genuinely non-hyperbolic".
Here we consider the Markovian case instead of the i.i.d. case and our approach allows to  consider higher dimensional compact metric spaces  and continuous maps which may not be injective, in particular extending the results in \cite{BhLe} to a larger class of random products of maps,
see Section~\ref{sss.monotone} .

%

As noted above, we are also interested in
the asymptotic behavior of orbits and study  synchronization.
This phenomenon  was first observed  by 
 Huygens \cite{Huygens} in the synchronizing movement of two pendulum clocks hanging from a wall and since then has been investigated in several areas, see \cite{syncrhonizationbook}.
 A random product $\varphi$ is said to be \emph{synchronizing} if
 random orbits of different initial points converge to each other with probability $1$
 (see Section \ref{sync} for the precise definition).  
In this context, a first result was 
 obtained by Furstenberg in \cite{Fur} for i.i.d. products of random matrices on projective spaces.
 The occurrence of synchronization for  
i.i.d. random products of circle homeomorphisms is now completely characterized by Antonov's theorem
 \cite {Antonov,KlepAntonov}
and its generalization in
 \cite {Malicet}.
We prove that a Markovian random product defined on 
 a connected compact subset of $\mathbb{R}^m$
  satisfying the splitting condition is (exponentially fast) synchronizing, see
  Theorem~\ref{t.mcsincronizacao}. In the i.i.d.
  case and when $m=1$, this result is obtained in \cite{Malicet}. In Section~\ref{ss.satisfying}  we present some classes of 
  higher dimensional
  random products satisfying the splitting condition, see also Theorem~\ref{specialclass}.
We note that Theorem~\ref{t.mcsincronizacao} is a consequence of a general result of ``contraction of volume" in
compact spaces, see Theorem~\ref{presincronizacao}.


  Finally, let us observe that,
  besides its intrinsic interest, the study of the dynamics of Markov operators associated to random products is closely related to the study of the corresponding step skew products. The latter is currently a subject of intensive research and we refrain to select a list of references. We just would like to
 mention  \cite{Volk}, closely related to our paper, where it is proved that generically Markovian random products of 
 $C^1$ diffeomorphisms on the interval $[0,1]$ (with image strictly contained in $[0,1]$) has a finite number 
 of stationary measures and to each one it corresponds a physical measure of the associated skew product.

  This paper is organized as follows. In Section \ref{s.main} we state the main definitions and the precise 
  statements of our results. In Section~\ref{s.newconsequences}
  we state consequences of the splitting hypothesis for general compact spaces. 
  In Section~\ref{s.consequences} we draw consequences from the splitting condition in one-dimensional settings.
 In Section~\ref{s.Markovop} we prove the asymptotic stability of the Markov operator (Theorem~\ref{stable}) and 
 a strong version of the Ergodic Theorem (Corollary \ref{ergodictheorem}).  In Section \ref{s.sincro} we study the synchronization of Markovian random systems (Theorem~\ref{t.mcsincronizacao}) which will be a consequence of a contraction result for measures for general compact metric spaces (Theorem~\ref{presincronizacao}). Finally, in Section~\ref{ss.splittingcondition} we prove 
 Theorem~\ref{specialclass} which states sufficient conditions for the splitting property.

\section{Main results}
\label{s.main}

\subsection{General setting} 
Let $(E,\mathscr{E})$ be a measurable space and consider a transition probability 
 $P\colon E\times \mathscr{E}\to [0,1]$, i.e.,  
 for every $x\in E$ the mapping $A\mapsto P(x,A)$ is a probability measure on $E$ and 
 for every $A\in \mathscr{E}$ the mapping $x \mapsto P(x,A)$ is measurable with respect to $\mathscr{E}$. Recall that a 
 measure $m$ on $E$ is called a \emph{stationary measure} with respect to $P$ if 
 $$
 m(A)=\int P(x,A)\, dm(x),
 \quad
 \mbox{for every $A\in \mathscr{E}$.}
 $$

Suppose that $E\eqdef\{1,\dots,k\}$ is a finite set endowed with the discrete sigma-algebra $\mathscr{E}$.
 Consider the space of unilateral sequences $\Sigma_{k}=E^{\mathbb{N}}$ endowed 
with the product sigma-algebra $\mathscr{F}=\mathscr{E}^{\mathbb{N}}$. Given a transition probability $P$ on $E$ and  stationary measure $m$, there is a unique probability measure $\mathbb{P}$ 
on $\Sigma_{k}$ such that the sequence of coordinate mappings on $\Sigma_{k}$
is a homogeneous Markov chain with probability transition 
$P$ and starting measure $m$.  For details see, e.g.,
\cite[Chapter 1]{Revuz}.
 The measure $\mathbb{P}$ is the \emph{Markov measure} of the pair $(P,m)$.
 Denote by $\sigma$ the shift map on $\Sigma_{k}$. The measure $\mathbb{P}$ is $\sigma$-invariant  and the metric dynamical system $(\Sigma_{k},\mathscr{F},\mathbb{P},\sigma)$ is called a \emph{Markov shift}.

Throughout this paper $(M,d)$ is a compact metric space. Let $\varphi(n,\omega,x)=f^{n}_{\omega}(x)$ be a random product on $M$ over the Markov shift $(\Sigma_{k},\mathscr{F},\mathbb{P},\sigma)$ as in~\eqref{e.randomproductmap}. Fix $x\in M$ and note that
the sequence of random variables $X_{n}^{x}\colon \omega \mapsto f_{\omega}^{n}(x)$, in general, is not a Markov chain on the probability space $(\Sigma_{k},\mathscr{F},\mathbb{P})$. However, it turns out that the sequence 
$Z_{n}^{x}(\omega)=(\omega_{n-1}, X_{n}^{x}(\omega))$, $n\geq 1$, is 
a Markov chain with range on the space $\widehat M\eqdef E\times M$ (see \cite{Recurrent}) with probability transition given by 
\begin{equation}\label{transition}
\widehat P((i,z),B)\eqdef P(i, \{j\colon (j,f_{j}(z))\in B \})
\end{equation}
for every $B\in \mathscr{E}\otimes \mathscr{B}$, where $P$ is the transition probability on $E$ associated to $\mathbb{P}$.

Let $\mathcal{M}_{1}(\widehat M)$ be the space of probabilities on $\widehat M$ equipped with the weak-$*$ topology.
The {\emph{Markov operator}} associated to $\varphi$ is given by 
\begin{equation}\label{handyform}
 T\colon \mathcal{M}_{1}(\widehat M)\to \mathcal{M}_{1}(\widehat M),\qquad  T\widehat\mu(B)\eqdef \int \widehat P((i,z),B)\,d\widehat\mu(i,z).
 \end{equation}
 This operator is called  \emph{asymptotically stable} if there is a probability measure $\widehat \mu$ such that $T\widehat \mu=\widehat \mu$
and for every $\widehat \nu\in \mathcal{M}_{1}(\widehat M)$ it holds 
$$
\lim_{n\to \infty} T^{n}\widehat \nu= \widehat \mu,
$$
in the weak-$*$ topology. Note that a measure $\widehat \mu$ that satisfies $T\widehat \mu=\widehat\mu$ is, by definition, a stationary measure. 

In this paper ,we obtain sufficient conditions for the asymptotic stability of the Markov operator. We also 
investigate the (common) asymptotic behavior of the family $((X_{n}^{x})_{n\in \mathbb{N}})_{x\in X}$.

 Finally, note
that a probability transition and a stationary measure on the finite set $E=\{1,\dots,k\}$ are given respectively
by a \emph{transition matrix} and a \emph{stationary probability vector}. 
 Recall that
a $k\times k$ matrix  $P=(p_{ij})$ is a 
\emph{transition matrix} if 
$p_{ij}\ge 0$ for all $i,j$ and 
for every $i$ it holds
$\sum_{j=1}^{k}p_{ij}=1$.
A \emph{stationary probability vector} associated to $P$  is a 
  vector $\bar p=(p_{1},\ldots,p_{k})$ whose elements are 
  non-negative real numbers, sum up to $1$, and satisfies $\bar p \, P= \bar p$.  

A Markov shift $(\Sigma_{k},\mathscr{F},\mathbb{P},\sigma)$ 
with transition matrix $P=(p_{ij})$
is called \emph{primitive} if there is $n\geq 0$ such that 
all entries of $P^{n}$ are positive. 
It is called \emph{irreducible} if  
for every $\ell,r\in \{1,\dots,k\}$ there is $n=n(\ell,r)$ such that
$P^{n}=(p^n_{ij})$ satisfies $p^n_{\ell,r}>0$.
An irreducible transition matrix has a unique positive stationary
probability vector $\bar p=(p_{i})$, see 
\cite[page 100]{Kemeny}. Clearly, every primitive Markov shift is irreducible. Finally, recall that a sequence $(a_{1},\dots,a_{\ell})\in \{1,\dots,k\}^{\ell}$ is called \emph{admissible} with respect to $P=(p_{ij})$ if $p_{a_{i}a_{i+1}}>0$ for every $i=1,\dots, \ell-1$.

\subsection{The  splitting condition and the stability of the Markov operator}
Before introducing the main result of this section, let us define the splitting condition.

Let $(Y,d_{0})$ be a separable metric space and let $M$ be a compact subset of $Y^{m}$, $m\geq 1$. 
In $Y^{m}$ consider the metric $d(x,y)\eqdef \sum_{i}d_{0}(x_{i},y_{i})$ and
the continuous projections $\pi_{s}\colon 
X^{m}\to X$, given by $\pi_{s}(x)\eqdef x_{s}$, where $x=(x_{1},\dots ,x_{m})$.

\begin{definition}[Splitting condition]\label{def1}
\emph{
Let $\varphi(n,\omega,x)=f^{n}_{\omega}(x)$ be a random product on a compact subset $M\subset Y^m$ over the Markov shift $(\Sigma_{k},\mathscr{F},\mathbb{P},\sigma)$ and let $P=(p_{ij})$ its transition matrix.
We say that $\varphi$ \emph{splits} if
there exist admissible sequences $(a_{1},\dots,a_{\ell})$ and $(b_{1},\dots b_{r})$ with $a_{\ell}=b_{r}$, such that $M_{1}\eqdef f_{a_{\ell}}\circ\dots\circ f_{a_{1}}(M)$ and $M_{2}\eqdef f_{b_{r}}\circ\dots\circ f_{b_{1}}(M)$
satisfy\begin{equation}\label{splitgene}
\pi_{s}(f_{\omega}^{n}(M_{1}))\cap \pi_{s}(f_{\omega}^{n}(M_{2}))=\emptyset
\end{equation}
 for every $\omega\in \Sigma_k$, every $n\geq 0$, and  
every projection $\pi_{s}$, $i=1, \dots,m$.}
\end{definition}

The splitting equation \eqref{splitgene} means that the action of the random product on the sets $M_{1}$ and $M_{2}$ remains  disjoint when projected in all ``directions''.
We will present in Section~\ref{ss.satisfying}
some
relevant classes of random products for which if the splitting 
equation \eqref{splitgene} holds for $n=0$ then the splitting condition is satisfied
(i.e., equation \eqref{splitgene} holds for every $n\ge 0$). 
This 
is the case for instance, when $m=1$ and all maps of the random product are injective.

%
%

\begin{remark}\label{r.discussion}
\emph{The
splitting condition was introduced in \cite[Section 5]{Dubins},
 in the context of i.i.d. random products of
monotone maps of the interval. Our definition is more general and coincides with the original one in the case of monotone maps of the interval.
The above definition is somewhat similar to the strong open set condition (SOSC) which is very often 
studied in the context of iterated function systems, see \cite{Hu}, although mainly with a deterministic focus.
 The SOSC is for instance satisfied if the interiors of the images $f_i(M)$ are all pairwise disjoint.} 
\end{remark}
%
%
%
%

Our first result deals with the dynamics of the Markov operator.

\begin{mtheorem}[Asymptotic stability of the Markov operator]
\label{stable}Let $Y$ be a separable metric space such that every 
(non-singleton) connected subset of $Y$ has nonempty interior.  Let $M\subset Y^{m}$ be a connected compact subset and consider a random product $\varphi$ on $M$ over a primitive Markov shift and suppose that $\varphi$ splits.
  Then
 the associated Markov operator is asymptotically stable.
\end{mtheorem}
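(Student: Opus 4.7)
The plan is to establish asymptotic stability in three conceptual steps: existence of a stationary measure, almost-sure contraction of diameters under the random product, and transfer of this pathwise contraction to weak-$*$ convergence of measures.

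For existence, the space $\widehat M = E \times M$ is compact metric and the Markov operator $T$ is continuous on $\mathcal{M}_{1}(\widehat M)$ in the weak-$*$ topology. A standard Krylov--Bogolyubov argument (averaging $\tfrac{1}{N}\sum_{n=0}^{N-1} T^{n}\widehat\nu$ and extracting a subsequential weak-$*$ limit) produces at least one fixed point $\widehat\mu = T\widehat\mu$. What remains is uniqueness and global attraction.

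The heart of the proof is to show the diameter contraction
$$\lim_{n\to\infty}\mathrm{diam}\bigl(f^{n}_{\omega}(M)\bigr)=0 \quad \text{for $\mathbb{P}$-a.e.\ $\omega$.}$$
This step crucially combines the splitting condition, primitivity of the shift, connectedness of $M$, and the hypothesis on $Y$. I would argue coordinate-wise: for each projection $\pi_{s}$, the set $\pi_{s}(f^{n}_{\omega}(M))$ is a compact connected subset of $Y$ which by \eqref{splitgene} contains the two disjoint compact sets $\pi_{s}(f^{n}_{\omega}(M_{1}))$ and $\pi_{s}(f^{n}_{\omega}(M_{2}))$. By hypothesis on $Y$, a non-singleton connected subset of $Y$ has nonempty interior, so any positive liminf of $\mathrm{diam}(\pi_{s}(f^{n}_{\omega}(M)))$ would freeze a persistent topological ``slab''. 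By primitivity, every concatenation involving the splitting words $(a_{1},\dots,a_{\ell})$ and $(b_{1},\dots,b_{r})$ occurs with positive probability, and by the Markov version of the Borel--Cantelli lemma such concatenations appear infinitely often in a typical sample $\omega$; combined with the persistence of disjoint projections, this precludes keeping such a slab forever. I expect this step to rely on the consequences of the splitting hypothesis developed in Section~\ref{s.newconsequences}, and to be the main obstacle.

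Finally, once diameter contraction is established, asymptotic stability follows by a standard argument. For any continuous $\phi\colon \widehat M \to \mathbb{R}$ and any $\widehat\nu \in \mathcal{M}_{1}(\widehat M)$, the definition of $T$ and the Markov structure give
$$\int \phi\, d(T^{n}\widehat\nu) = \int_{\widehat M} \mathbb{E}_{i}\bigl[\phi(\omega_{n-1}, f^{n}_{\omega}(x))\bigr]\, d\widehat\nu(i,x),$$
where $\mathbb{E}_{i}$ is expectation with the chain started at $i$. Diameter contraction implies that, $\mathbb{P}$-a.s., $f^{n}_{\omega}(x)$ becomes asymptotically independent of $x\in M$, so bounded convergence forces $\int \phi\, d(T^{n}\widehat\nu)$ to tend to a limit depending only on $\phi$ (and on the symbol marginal, which is itself governed by the unique stationary vector of the primitive matrix $P$). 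Specializing to $\widehat\nu=\widehat\mu$ identifies this limit as $\int \phi\, d\widehat\mu$. Hence $T^{n}\widehat\nu \to \widehat\mu$ weakly for every $\widehat\nu$, which is exactly the asymptotic stability of $T$ and, as a by-product, the uniqueness of $\widehat\mu$.
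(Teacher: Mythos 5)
Your strategy---forward diameter contraction plus a ``transfer'' to weak-$*$ convergence of the Markov operator---differs from the paper's, and it has a genuine gap at its central step. Forward synchronization, $\mathrm{diam}(f^n_\omega(M))\to 0$ for $\mathbb{P}$-a.e.\ $\omega$, is precisely Theorem~\ref{t.mcsincronizacao}, but the paper establishes it \emph{only} for $M\subset\mathbb{R}^m$: the proof rests on Theorem~\ref{presincronizacao} applied to Lebesgue measure together with the identity $\mathrm{diam}(J)=m(J)$ for intervals, i.e.\ on having a nonatomic reference measure that detects diameters. In the general separable-$Y$ setting of Theorem~\ref{stable} no such measure is available, and the measure-contraction result (Theorem~\ref{presincronizacao}) does not imply diameter contraction; your ``a positive liminf would freeze a persistent slab'' sentence is a heuristic, not an argument, and you have not shown how primitivity plus Borel--Cantelli yield a quantitative shrinking in this generality. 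Independently, the transfer step as written is not a direct bounded-convergence argument: for a fixed $\omega$ the points $f^n_\omega(x)$ need not converge even when $\mathrm{diam}(f^n_\omega(M))\to 0$ (the small image drifts), so $\phi(\omega_{n-1},f^n_\omega(x))$ has no $\omega$-wise limit; one must instead compare $T^n\widehat\nu$ to $T^n\widehat\mu$ for a stationary $\widehat\mu$ and handle the mismatch of symbol marginals (your reference to Perron--Frobenius is the right instinct), but this recasting is not what you wrote and needs to be done carefully.

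The paper sidesteps both difficulties by working with the \emph{reverse-order} iterates $f_{\xi_0}\circ\cdots\circ f_{\xi_n}(M)$, which are nested: the fibre $I_\xi$ is a decreasing intersection and always exists as a nonempty connected compact set. Theorem~\ref{abundance} is the real workhorse of Theorem~\ref{stable}: it shows $\mathbb{P}^-(S_\varphi)=1$, i.e.\ this intersection is $\mathbb{P}^-$-a.s.\ a singleton, where $\mathbb{P}^-$ is the inverse Markov measure. This is a Markovian Letac principle, and the inverse measure is exactly what replaces the i.i.d.\ exchangeability that would otherwise let one swap the order of composition. From there, Lemma~\ref{interessante} gives the contraction of \emph{any} sequence of pushforward measures $f_{\omega_0*}\cdots f_{\omega_n*}\mu_n$ to $\delta_{\pi(\omega)}$ (this is where bounded convergence is legitimately applicable), Lemma~\ref{l.perronfrob} equilibrates the symbol marginal, and the argument produces the attracting stationary measure explicitly as $\widehat\pi_*\mathbb{P}^-$, with $\pi$ the coding map. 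Your proposal contains neither the reversal, the inverse Markov measure, nor the coding map, and these are precisely the devices that make the argument work for general $Y$ and for genuinely Markovian (non-i.i.d.)\ products.
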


 
A natural context where the above theorem applies is when $Y$ is the real line.

As a consequence of Theorem \ref{stable} we obtain a strong version 
of the Ergodic Theorem. Under the assumptions of Theorem \ref{stable}, 
the Markov chain $ Z_{n}^{x}(\omega)=(\omega_{n-1},X_{n}^{x}(\omega))$
has a unique stationary measure $\widehat\mu$. This allows  us to apply the Breimans' ergodic theorem \cite{Breiman} to get  a subset $\Omega_{x}\subset \Sigma_{k}$ of full measure (depending on $x$)
such that 
 \begin{equation}\label{ergodic}
 \lim_{n\to \infty}\frac{1}{n}\sum_{i=0}^{n-1}\widehat \phi(Z_{j}^{x}(\omega))=\int \widehat \phi(i,x)\,d\widehat\mu(i,x),
 \end{equation}
 for every $\omega\in \Omega_{x}$ and every continuous function $\widehat \phi\colon  \widehat M\to \mathbb{R}$. 

Thus, we can describe the time average of the sequence $X_{n}^{x}$ for 
every $x$. For every $\omega\in \Omega_{x}$ and every continuous function $\phi\colon M\to \mathbb{R}$ we have 
that 
\begin{equation}\label{ergodic3}
\lim_{n\to \infty}\frac{1}{n}\sum_{i=0}^{n-1}\phi(f_{\omega}^{j}(x))=\sum _{i=1}^{k}\int \phi\,d\mu_{i},
\end{equation}
where $\mu_{i}$ is the $i$-section of $\widehat\mu$ (see  Section~\ref{ss.Markovop} for precise definitions).
To give a more detailed description of the right hand side in \eqref{ergodic3} let us introduce two 
definitions.

Consider a transition matrix $P=(p_{ij})$ and a positive stationary probability vector
$\bar p=(p_{1},\dots,p_{k})$
of $P$. The \emph{inverse transition matrix} associated to $(P,\bar p)$ is the matrix $Q=(q_{ij})$ where 
\begin{equation}\label{e.Q}
 q_{ij}=\frac{p_{j}}{p_{i}}p_{ji}.
\end{equation}
 Note that $Q$ is a transition matrix and $\bar p$ is a stationary probability vector of $Q$. 
 The Markov measure associated to $(Q,\bar{p})$ is called the \emph{inverse Markov measure} and 
 is denoted by $\mathbb{P}^{-}$.

We say that a measurable map $\pi\colon\Sigma_{k}\to  M$ is an \emph{invariant map} 
 of the random product if 
 \begin{equation}\label{f1omega}
f_{\omega_{0}}( \pi(\sigma(\omega)))= \pi(\omega), \quad \mbox{$\mathbb{P}^{-}$-almost everywhere.}
 \end{equation}
 Invariant maps sometimes give relevant information about the 
random system. For instance, in the theory of contracting iterated function systems, see \cite{Hu},
the coding map is the unique (a.e.) invariant map and it is essential in the description 
of properties of attractors and stationary measures of i.i.d random products.

Recall that given a measurable map $\phi \colon X \to Z$ and a measure $\mu$ in $X$
the {\emph{pushforward of $\mu$ by $\phi$}}, denoted $\phi_* \mu$, is the measure on $Z$
defined by  $\phi_{*} \mu (A)\eqdef  \mu (\phi^{-1} (A))$.

\begin{corollary}[Strong version 
of the Ergodic Theorem]
\label{ergodictheorem}
Let $Y$ be a separable metric space such that every 
(non-singleton) connected subset of $Y$ has nonempty interior.  Let $M\subset Y^{m}$ be a connected compact subset and consider a random product $\varphi(n,\omega,x)=f^{n}_{\omega}(x)$ on $M$ over a primitive Markov shift
$(\Sigma_{k},\mathscr{F},\mathbb{P}, \sigma)$ and suppose that $\varphi$ splits.
Then there is a unique ($\mathbb{P}^-$-a.e.) invariant map
 $\pi\colon \Sigma_{k}\to M$. Moreover,  for every $x\in M$ and 
 for $\mathbb{P}$-almost every $\omega\in\Sigma_{k}$ it holds
$$
\lim_{n\to \infty}\frac{1}{n}\sum_{i=0}^{n-1}\phi(f_{\omega}^{i}(x))=\int \phi\,d(\pi_{*}\mathbb{P}^{-}),
$$
for every continuous function $\phi\colon M\to \mathbb{R}$.
\end{corollary}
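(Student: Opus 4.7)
The plan is to combine the asymptotic stability of Theorem~\ref{stable} with the synchronization (volume-contraction) result underlying Theorem~\ref{t.mcsincronizacao} to construct the invariant map, and then identify its pushforward with the $M$-marginal of the stationary measure $\widehat\mu$. By Theorem~\ref{stable} the Markov chain $Z_n^x$ on $\widehat M$ admits a unique stationary measure $\widehat\mu$, so Breiman's ergodic theorem, applied to $\widehat\phi(i,x)\eqdef\phi(x)$, already gives, for every $x\in M$ and $\mathbb{P}$-a.e.\ $\omega$,
$$\frac{1}{n}\sum_{j=0}^{n-1}\phi(f_\omega^j(x))\longrightarrow \int_M\phi\,d\mu_M,$$
where $\mu_M$ is the image of $\widehat\mu$ under the projection $E\times M\to M$. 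Thus the whole corollary reduces to constructing a $\mathbb{P}^-$-a.e.\ unique invariant map $\pi$ with $\pi_*\mathbb{P}^-=\mu_M$.

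For the construction I would consider the nested compact sets
$$F_n(\omega)\eqdef f_{\omega_0}\circ f_{\omega_1}\circ\cdots\circ f_{\omega_{n-1}}(M),\qquad F_{n+1}(\omega)\subseteq F_n(\omega),$$
and show that $\mathrm{diam}(F_n(\omega))\to 0$ for $\mathbb{P}^-$-a.e.\ $\omega$, which defines $\pi(\omega)$ as the unique point of $\bigcap_n F_n(\omega)$. To prove the diameter shrinks, I would pass to the two-sided stationary Markov extension $(\widetilde\Sigma,\widetilde{\mathbb{P}},\sigma)$ of $(\Sigma_k,\mathbb{P},\sigma)$: setting $\omega_n=\xi_{-n-1}$, the law of $(\omega_0,\omega_1,\ldots)$ under $\widetilde{\mathbb{P}}$ is precisely $\mathbb{P}^-$ (which is exactly why the inverse transition matrix $Q$ of \eqref{e.Q} appears), and
$$F_n(\omega)=f_{\xi_{-1}}\circ f_{\xi_{-2}}\circ\cdots\circ f_{\xi_{-n}}(M).$$
By stationarity the block $(\xi_{-n},\ldots,\xi_{-1})$ has the same law under $\widetilde{\mathbb{P}}$ as $(\omega_0,\ldots,\omega_{n-1})$ under $\mathbb{P}$, so $F_n(\omega)$ is distributed as $f_\omega^n(M)$. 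The volume-contraction result underlying Theorem~\ref{t.mcsincronizacao} (namely Theorem~\ref{presincronizacao}) yields $\mathrm{diam}(f_\omega^n(M))\to 0$ $\mathbb{P}$-a.s., hence in probability, so $\mathbb{P}^-(\mathrm{diam}(F_n)>\varepsilon)\to 0$ for every $\varepsilon>0$; the monotonicity of $\mathrm{diam}(F_n)$ in $n$ upgrades this to $\mathbb{P}^-$-a.s.\ convergence to $0$. The invariance $f_{\omega_0}(\pi(\sigma\omega))=\pi(\omega)$ is then immediate from $f_{\omega_0}(F_n(\sigma\omega))=F_{n+1}(\omega)$, and uniqueness follows because any other invariant map $\pi'$ satisfies $\pi'(\omega)=f_{\omega_0}\circ\cdots\circ f_{\omega_{n-1}}(\pi'(\sigma^n\omega))\in F_n(\omega)$ for every $n$, forcing $\pi'=\pi$ $\mathbb{P}^-$-a.e.

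For the identification $\pi_*\mathbb{P}^-=\mu_M$, I would extend the Markov chain $(Z_n^x)$ on $\widehat M$ to a stationary two-sided chain $(\xi_n,Y_n)_{n\in\mathbb{Z}}$ with one-dimensional marginal $\widehat\mu$. Then $Y_0=f_{\xi_{-1}}\circ\cdots\circ f_{\xi_{-n}}(Y_{-n})\in F_n(\omega)$ for every $n$, so $Y_0=\pi(\omega)$ almost surely; since $\xi_{-1}=\omega_0$, the joint law of $(\omega_0,\pi(\omega))$ under $\mathbb{P}^-$ coincides with the joint law of $(\xi_{-1},Y_0)$, which is $\widehat\mu$, and projecting on the $M$-factor yields $\pi_*\mathbb{P}^-=\mu_M$, as desired. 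The main obstacle is the a.s.\ shrinking of $\mathrm{diam}(F_n)$: the synchronization theorem directly controls only the inside-out composition $f_\omega^n=f_{\omega_{n-1}}\circ\cdots\circ f_{\omega_0}$, while $\pi$ naturally arises from the outside-in composition; the reversal is exactly what forces the inverse Markov measure $\mathbb{P}^-$ into the statement, and the interplay of stationarity, the duality $\mathbb{P}\leftrightarrow\mathbb{P}^-$, and the monotonicity of the nested images is the key device allowing one to promote the convergence in probability to almost surely.
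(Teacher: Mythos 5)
Your overall architecture is sound, and several pieces (Breiman for the time averages, the uniqueness argument via $\rho(\omega)\in F_n(\omega)$, the identification of $\pi_*\mathbb{P}^-$ with the $M$-marginal of $\widehat\mu$) are essentially what the paper does. But the construction of the invariant map has a genuine gap: you build $\pi$ by showing $\mathrm{diam}(F_n(\omega))\to 0$ for $\mathbb{P}^-$-a.e.\ $\omega$ via the two-sided stationary extension and Theorem~\ref{t.mcsincronizacao} (equivalently Theorem~\ref{presincronizacao}), but those results live under strictly stronger hypotheses than the corollary. First, the corollary's ambient space is a general separable $Y$ whose nontrivial connected subsets have nonempty interior; diameter contraction is proved in the paper only for $M\subset\mathbb{R}^m$, because it needs to convert the measure bound of Theorem~\ref{presincronizacao} into a metric bound via Lebesgue measure on $\mathbb{R}$, and there is no analogous reference measure on a general $Y$. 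Second, both Theorems~\ref{presincronizacao} and~\ref{t.mcsincronizacao} require the extra hypothesis that some row of $P$ is strictly positive ($\exists u:\ p_{uj}>0$ for all $j$), which primitivity does not imply (e.g.\ a primitive $3\times 3$ matrix can have a zero in every row). So even if $M\subset\mathbb{R}^m$, your route would be invoking a result under hypotheses the corollary does not grant you.

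The paper avoids both problems: the shrinking-to-a-point under $\mathbb{P}^-$ is exactly Theorem~\ref{abundance}, which states $\mathbb{P}^-(S_\varphi)=1$ for general $Y$ and only under primitivity, proved directly via the substitution-function estimate (Proposition~\ref{p.l.menorouigual}) together with the "countable dense subset meets a nonsingleton connected set" argument, rather than through any reference measure or the synchronization theorem. Once you replace your two-sided-extension step with a citation of Theorem~\ref{abundance} (which already delivers the a.s.\ singleton fibres you want), the rest of your proof goes through: define $\pi$ as the coding map on $S_\varphi$, get invariance from $f_{\omega_0}(F_n(\sigma\omega))=F_{n+1}(\omega)$, uniqueness from $\rho(\omega)\in F_n(\omega)$, identify the stationary measure as $\widehat\pi_*\mathbb{P}^-$, and apply Breiman. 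Your two-sided extension idea is a clean way to transport $\mathbb{P}$-a.s.\ statements to $\mathbb{P}^-$-a.s.\ ones, but here it is a detour around a result the paper has already proved in the needed generality.
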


\begin{remark}
\label{r.coding}{\em{
The proof of the corollary provides the unique invariant map: $\pi$ is the so-called
{\emph{(generalised) coding map}} (see \eqref{coding}) that is defined on the subset of weakly hyperbolic sequences of $\Sigma_k$ (see \eqref{weakly}).}}
\end{remark}

\subsection{Synchronization} \label{sync}
We now consider random products over irreducible Markov shifts.
Let $\varphi(n,\omega,x)=f^{n}_{\omega}(x)$ be a random product
over a Markov shift $(\Sigma_{k},\mathscr{F},\mathbb{P}, \sigma)$. We say that $\varphi$ is \emph{synchronizing} if 
for every pair $x$ and $y$ we have that 
$$
\lim_{n\to \infty}d(f_{\omega}^{n}(x), f_{\omega}^{n}(y))=0, \quad
\mbox{for $\mathbb{P}$-a.e. $\omega\in \Sigma_{k}$.}
$$

Let us start with a general result that states a weak form of synchronization in general compact metric spaces.

\begin{mtheorem}[Contraction of measures]\label{presincronizacao}
Let $\varphi(n,\omega,x)=f^{n}_{\omega}(x)$ be a random product on a compact metric space $M\subset Y^{m}$ over an irreducible Markov
shift $(\Sigma_{k},\mathscr{F},\mathbb{P}, \sigma)$. Suppose that $\varphi$ splits. 
Let $P=(p_{ij})$ be the corresponding transition matrix 
and suppose that there is $u$ such that $p_{uj}>0$ for every $j$. 
Then there is $q<1$  such that for every probability measure $\mu$ on $Y$ and for every $s=1,\dots, m$, for $\mathbb{P}$-almost every $\omega$ there is  $C_{s}(\omega)>0$ such that
$$
\mu ( \,\pi_{s}(f^{n}_{\omega}(M)))\leq C_{s}(\omega) q^{n}, \quad \mbox{for every} \quad n\geq 1.
$$
\end{mtheorem}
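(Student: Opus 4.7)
The plan is to prove that $h^{(s)}_n(\omega) := \mu(\pi_s(f^n_\omega(M)))$ decays exponentially in $n$ for $\mathbb{P}$-almost every $\omega$, with a rate $q<1$ independent of $\mu$, $s$ and $\omega$. The strategy combines the splitting condition with the recurrence of the Markov chain to the distinguished state $u$ and the ergodic theorem for finite irreducible chains.

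\emph{Setup.} Let $L := \max(\ell, r)$. Since $p_{uj}>0$ for every $j$ and both words are admissible, the conditional probabilities
\[
p_A := p_{u, a_1}\prod_{i=1}^{\ell-1} p_{a_i, a_{i+1}}, \qquad p_B := p_{u, b_1}\prod_{i=1}^{r-1} p_{b_i, b_{i+1}},
\]
of observing, right after a visit to $u$, the $a$-word or the $b$-word, are both strictly positive. By a minor modification of the words one may also ensure that these two events are disjoint. Fix $s\in\{1,\dots,m\}$ and any time $t$ with $\omega_{t-1}=u$. On the $a$-event one has
\[
f^{t+\ell}_\omega(M) = f_{a_\ell}\circ\cdots\circ f_{a_1}\bigl(f^t_\omega(M)\bigr)\subset f_{a_\ell}\circ\cdots\circ f_{a_1}(M)=M_1,
\]
so $h^{(s)}_{t+\ell}(\omega)\leq c_1 := \mu(\pi_s(M_1))$; on the $b$-event, similarly, $h^{(s)}_{t+r}(\omega)\leq c_2:=\mu(\pi_s(M_2))$. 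The $n=0$ case of~\eqref{splitgene} gives $\pi_s(M_1)\cap\pi_s(M_2)=\emptyset$, so $c_1+c_2\leq 1$.

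\emph{Main step: multiplicative contraction in expectation.} The core analytic task is to show that there exist $L_0\geq L$ and $\eta\in(0,1)$, depending only on the system, such that
\[
\mathbb{E}\bigl[h^{(s)}_{t+L_0}(\omega)\mid \omega_0,\dots,\omega_{t-1}\bigr] \leq \eta\, h^{(s)}_t(\omega) \quad \text{on the event } \{\omega_{t-1}=u\}.
\]
A trivial absolute bound already yields $\mathbb{E}[h^{(s)}_{t+L}\mid\cdots]\leq p_A c_1+p_B c_2+(1-p_A-p_B)\leq 1-\tfrac{1}{2}\min(p_A,p_B)<1$ (using that $c_1+c_2\leq 1$ forces $\max(1-c_1,1-c_2)\geq 1/2$), but extracting a genuine multiplicative factor in front of $h^{(s)}_t(\omega)$ is more delicate. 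My plan is to exploit the full splitting condition, valid for \emph{all} $\omega$ and $n$: $\pi_s(f^n_\omega(M_1))\cap\pi_s(f^n_\omega(M_2))=\emptyset$. Prepending the $a$-word or the $b$-word to an arbitrary admissible continuation, this yields a nested, Cantor-like refinement of $\pi_s(f^N_\omega(M))$ at successive visits to $u$: at each refinement step the ``opposite-side'' projection is missed by $\pi_s(f^N_\omega(M))$ and accounts, on average, for a definite fraction of the total $\mu$-mass at that scale, producing the multiplicative factor $\eta$. The structural lemmas from Section~\ref{s.newconsequences} on the general consequences of the splitting hypothesis should supply the coupling estimates needed to make this precise.

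\emph{Conclusion and obstacle.} Once the multiplicative contraction holds, the argument concludes in a standard way. By the strong Markov property and the ergodic theorem for the finite irreducible Markov chain, visits of $(\omega_n)$ to $u$ recur at positive linear density $\pi(u)>0$ almost surely; iterating the conditional contraction along these visits gives $\mathbb{E}[h^{(s)}_n]\leq C\rho^n$ for some $\rho<1$. For any $q\in(\rho,1)$, Markov's inequality yields $\mathbb{P}(h^{(s)}_n>q^n)\leq C(\rho/q)^n$, which is summable; by Borel--Cantelli, $h^{(s)}_n(\omega)\leq q^n$ for all sufficiently large $n$, $\mathbb{P}$-a.s., and absorbing the finite initial transient into $C_s(\omega)$ completes the proof with $q$ uniform in $\mu$ and $s$. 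The main obstacle is the multiplicative contraction in the middle step: the disjointness $\pi_s(M_1)\cap\pi_s(M_2)=\emptyset$ alone gives only an absolute bound, and converting it into a geometric decay rate uniform in the arbitrary test measure $\mu$ requires the iterated splitting together with careful control of the residual mass that lies outside the $M_1$- and $M_2$-sides.
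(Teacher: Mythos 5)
There is a genuine gap here, and you yourself flag it: the ``main step'' -- a conditional multiplicative contraction of the form $\mathbb{E}\bigl[h^{(s)}_{t+L_0}\mid\omega_0,\dots,\omega_{t-1}\bigr]\leq\eta\,h^{(s)}_t$ on $\{\omega_{t-1}=u\}$ -- is not established, only sketched as a plan. The more serious problem is that this step is not merely delicate; the supermartingale framing is misaligned with the structure of the problem. The forward iterates $f^n_\omega(M)=f_{\omega_{n-1}}\circ\cdots\circ f_{\omega_0}(M)$ are \emph{not} nested in $n$, so $\pi_s(f^{t+L_0}_\omega(M))$ can land in a region where the arbitrary test measure $\mu$ carries far more mass than it carries on $\pi_s(f^t_\omega(M))$. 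Nothing forces $h^{(s)}_{t+L_0}(\omega)$ to be comparable to $h^{(s)}_t(\omega)$, let alone a definite fraction of it, uniformly in $\mu$. The disjointness $\pi_s(M_1)\cap\pi_s(M_2)=\emptyset$ gives only the absolute bound $c_1+c_2\leq1$ that you note, and the ``iterated splitting / residual mass control'' you gesture at has no content as written.

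The paper avoids this entirely by reversing the order of quantification and of composition. By Fubini, $\mathbb{E}\bigl[\mu(\pi_s(f^n_\omega(M)))\bigr]=\int\mathbb{P}\bigl(x\in\pi_s(f^n_\omega(M))\bigr)\,d\mu(x)$, so it suffices to prove a pointwise bound $\mathbb{P}(x\in I^s_n)\leq\lambda^n$ uniform in $x$. The crucial observation is the time-reversal identity $\mathbb{P}(x\in I^s_n)=\mathbb{P}^-(x\in J^s_n)$, where $J^s_n(\omega)=\pi_s(f_{\omega_0}\circ\cdots\circ f_{\omega_{n-1}}(M))$ are the reverse-order iterates under the inverse Markov measure $\mathbb{P}^-$; unlike $I^s_n$, these \emph{are} nested. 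The pointwise bound is then proven in Lemma~\ref{fastfast} using Proposition~\ref{p.l.menorouigual}: an injective, measure-nondecreasing ``substitution map'' that replaces every occurrence of the word $\xi_0\dots\xi_{N-1}$ by $\eta_0\dots\eta_{N-1}$ in a cylinder; the splitting condition makes this map injective on the cylinders contributing to $\{x\in J^s_{\ell N}\}$ and sends them into the set of cylinders avoiding a fixed cylinder $W$ at the $iN$-slots, whose $\mathbb{P}^-$-measure decays geometrically (Claim~\ref{fast}, using $p_{\xi_0 j}>0$). Your final step (Markov's inequality plus Borel--Cantelli) matches the paper's use of Monotone Convergence plus summability in spirit, but the decisive combinatorial and measure-theoretic machinery in the middle is missing from your proposal.
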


The next result states the synchronization of Markovian random products on compact 
subsets of $\mathbb{R}^{m}$ in a strong version: uniform and exponential. We consider in $\mathbb{R}^{m}$ the metric $d(x,y)\eqdef \sum_{i}|x_{i}-y_{i}| $.

\begin{mtheorem}[Synchronization]\label{t.mcsincronizacao}
Let $\varphi(n,\omega,x)=f^{n}_{\omega}(x)$ be a random product on a compact set $M\subset \mathbb{R}^{m}$ over an irreducible Markov shift $(\Sigma_{k},\mathscr{F},\mathbb{P}, \sigma)$. Suppose that $\varphi$ splits. 
Let $P=(p_{ij})$ be the corresponding transition matrix 
and suppose that there is $u$ such that $p_{uj}>0$ for every $j$. Then there is $q<1$ such that for $\mathbb{P}$-almost every $\omega$ there is  $C(\omega)$ such that
$$
\mathrm{diam} \,( f^{n}_{\omega}(M)) \leq C(\omega) q^{n}, \quad \mbox{for every} \quad n\geq 1.
$$
\end{mtheorem}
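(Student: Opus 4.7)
The plan is to deduce Theorem~\ref{t.mcsincronizacao} directly from Theorem~\ref{presincronizacao}, exploiting the special feature of $\mathbb{R}$ that the Lebesgue measure of a closed bounded interval coincides with its diameter. All the analytic work is already packed into Theorem~\ref{presincronizacao}; what remains is a geometric conversion from a measure bound on each coordinate projection into a diameter bound on the image.

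First, the sum metric $d(x,y)=\sum_{s=1}^{m}|x_{s}-y_{s}|$ on $\mathbb{R}^{m}$ yields the elementary estimate
$$\mathrm{diam}(f^{n}_{\omega}(M)) \le \sum_{s=1}^{m}\mathrm{diam}(\pi_{s}(f^{n}_{\omega}(M))),$$
so it is enough to bound each projected diameter exponentially in $n$. Since $M$ is connected (as is implicit here and as in Theorem~\ref{stable}) and the maps $f_{i}$ are continuous, $f^{n}_{\omega}(M)$ is connected, and hence each $\pi_{s}(f^{n}_{\omega}(M))$ is a compact connected subset of $\mathbb{R}$, that is, a closed bounded interval. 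For such an interval the one-dimensional Lebesgue measure $\lambda$ coincides with the diameter, which is the mechanism that transfers a measure bound into a diameter bound.

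For each $s\in\{1,\dots,m\}$ I would fix a closed bounded interval $I_{s}\subset\mathbb{R}$ containing the compact set $\pi_{s}(M)$ (if $\pi_{s}(M)$ is a singleton that coordinate contributes $0$ for all $n$ and can be discarded) and set $\mu_{s}\eqdef \lambda|_{I_{s}}/\lambda(I_{s})$, a probability measure on $Y=\mathbb{R}$. Applying Theorem~\ref{presincronizacao} to $\mu_{s}$ and to the index $s$ yields, for $\mathbb{P}$-a.e.\ $\omega$, a constant $C_{s}(\omega)>0$ such that
$$\mathrm{diam}(\pi_{s}(f^{n}_{\omega}(M)))=\lambda(\pi_{s}(f^{n}_{\omega}(M)))=\lambda(I_{s})\mu_{s}(\pi_{s}(f^{n}_{\omega}(M)))\le \lambda(I_{s})C_{s}(\omega)q^{n},$$
using that $\pi_{s}(f^{n}_{\omega}(M))\subset I_{s}$ is an interval. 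Intersecting the $m$ resulting $\mathbb{P}$-full-measure sets (one per $s$) and setting $C(\omega)\eqdef \sum_{s=1}^{m}\lambda(I_{s})C_{s}(\omega)$ produces $\mathrm{diam}(f^{n}_{\omega}(M))\le C(\omega)q^{n}$ on a $\mathbb{P}$-full-measure set, with the same $q$ as in Theorem~\ref{presincronizacao}.

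The main obstacle is not in this short deduction, which is essentially two lines, but is packed into Theorem~\ref{presincronizacao} itself; the only subtlety in the present step is the identification of Lebesgue measure with diameter, which requires $\pi_{s}(f^{n}_{\omega}(M))$ to be an interval and is precisely where the connectedness of $M$ (plus the linearity of $\pi_{s}$) enters.
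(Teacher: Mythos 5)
Your proof is correct and follows essentially the same route as the paper's: decompose the diameter in the sum metric into the $m$ projected diameters, identify each projected diameter with a Lebesgue measure, and invoke Theorem~\ref{presincronizacao} coordinate by coordinate. You are in fact slightly more careful than the paper at two points: you normalize the Lebesgue measure on a bounding interval so that Theorem~\ref{presincronizacao} applies verbatim to a probability measure (the paper applies it to unrestricted Lebesgue measure), and you make explicit that the identification $\mathrm{diam}=\lambda$ needs each $\pi_{s}(f^{n}_{\omega}(M))$ to be an interval, hence requires the connectedness of $M$, a hypothesis the paper uses implicitly but omits from the statement of Theorem~\ref{t.mcsincronizacao}.
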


The above results extends  
\cite[Corollary 2.11]{Malicet} stated for  i.i.d. random products of monotone
(injective) interval maps.
Note that our result holds in 
higher dimensions.

\subsection{Random products with the splitting property}
\label{ss.satisfying}
We now describe some classes
of random products for which  the splitting 
equation \eqref{splitgene} for $n=0$ guarantees the splitting condition.

\subsubsection{Injective maps on ``one-dimensional''  compact spaces}
Let $Y$  be a compact metric space
 such that every 
(non-singleton) connected subset of $Y$ has nonempty interior. 
Let $\varphi(n,\omega,x)=f_{\omega}^{n}(x)$ be  a Markovian random product  on $Y$ such that the maps $f_{i}$ are injective.  If there exist admissible sequences $(a_{1},\dots,a_{\ell})$ and $(b_{1},\dots b_{r})$ with $a_{\ell}=b_{r}$, such that 
$$
f_{a_{\ell}}\circ\dots\circ f_{a_{1}}(Y)\cap f_{b_{r}}\circ\dots\circ f_{b_{1}}(Y)=\emptyset
$$
then $\varphi$ splits. This is  a direct consequence of the injectivity hypothesis and the fact that there is only one direction to project the space.

A natural context where the above comment applies is when $Y$ is an interval.
 Observe that in the one-dimensional setting, there are certain topological restrictions. For example,
although every (non-singleton) connected subset of $S^{1}$ has nonempty interior,  
continuous injective maps 
  on the circle have to be homeomorphisms and hence the splitting equation 
 \eqref{splitgene} cannot be satisfied. 
   We refer to \cite{Malicet,Anna} for recent results in the study of i.i.d random products of homeomorphisms on the circle.
Let us observe, however, that there are other  `one-dimensional''  metric spaces which are not intervals which carry Markovian random products 
that split, for example trees.

\subsubsection{Monotone maps on compact subsets of $\mathbb{R}^{m}$}
\label{sss.monotone}
 
Let $f\colon \mathbb{R}^{m}\to \mathbb{R}^{m}$ be a continuous injective map. Let $f^{i}\colon \mathbb{R}^{m}\to \mathbb{R}$ be the 
coordinates function of $f$, i.e., $\pi_i \circ f$. We write $f=(f^{1},\dots,f^{m})$. Since $f$ is injective then for every $i$ and every fixed $x_{1},\dots,x_{j-1},x_{j+1},\dots x_{m}$ the map $x\mapsto f_{i}(x_{1},x_{j-1},x,x_{j+1},\dots, x_{m})$
is monotone. Following \cite{BhLe} (although our setting is more general) we will introduce  a special class of injective maps, called \emph{monotone maps.}
For that  we need to  
introduce two definitions. 

A monotone map $g\colon \mathbb{R} \to \mathbb{R}$ is of  \emph {type $+$}
if it is increasing  and is of \emph{type $-$} if it is decreasing.  We say that $f^{i}$ is of \emph{type $(t_{1},\dots, t_{m})\in \{+,-\}^{m}$} if  
 for every $j$ and every $(x_{1},\dots,x_{j-1},x_{j+1},\dots x_{m})\in \mathbb{R}^{m-1}$ the map 
$$
x \mapsto f^{i}(x_{1},x_{j-1},x,x_{j+1},\dots, x_{m})
$$
is of type $t_{j}$.

Given $(t_{1},\dots, t_{m})\in  \{+,-\}^{m}$  a map 
 $f=(f^{1},\dots,f^{m})$ belongs to  $\mathcal{S}(t_{1},\dots, t_{m})$ 
 if and only if
\begin{itemize}
\item[(1)] If $t_{j}=t_{1}$ then $f^{j}$ is of type $(t_{1},\dots, t_{m})$,

\item[(2)] If $t_{j}\neq t_{1}$ then $f^{j}$ is of type
 $(s_{1},\dots, s_{m})$, where $s_{\ell}=+$ if $t_{\ell}=- $ and $s_{\ell}=-$ if $t_{\ell}=+$.
\end{itemize}

We denote by $\mathcal{S}_M(t_{1},\dots, t_{m})$ the maps $f \colon M\to M$ 
that admit an extension to a map $\widehat f \colon \mathbb{R}^m \to \mathbb{R}^m$ 
in $\mathcal{S}(t_{1},\dots, t_{m})$. 
 We say that a  Markovian random product $\varphi(n,\omega,x)=f^{n}_{\omega}(x)$ 
 is in
$ \widetilde{\mathcal{S}}_M(t_{1},\dots, t_{m})$ if
the maps $f_i$ belongs to
$\mathcal{S}_M(t_{1},\dots, t_{m})$ for every $i$.

 Let $A$ and $B$ be bounded subsets of $\mathbb{R}$. If $\sup A<\inf B$ then we write $A<B$. In particular, if $A<B$ then  
$A \cap B=\emptyset$. 

\begin{mtheorem}\label{specialclass}
 Given 
 $(t_{1},\dots, t_{m}) \in   \{+,-\}^{m} $ 
consider  a Markovian random product $\varphi(n,\omega,x)=f^{n}_{\omega}(x)$
in $\widetilde{\mathcal{S}}_M(t_{1},\dots, t_{m})$ defined on a compact set  $M\subset R^{m}$. 
Suppose that there exist admissible sequences $(a_{1},\dots,a_{\ell})$ and $(b_{1},\dots b_{r})$ with $a_{\ell}=b_{r}$, such that 
$M_{1}\eqdef f_{a_{\ell}}\circ\dots\circ f_{a_{1}}(M)$ and $M_{2}\eqdef f_{b_{r}}\circ\dots\circ f_{b_{1}}(M)$
 satisfy
\begin{itemize}
\item
$\pi_{s}(M_{1})< \pi_{s}(M_{2})$ if $t_{s}=+$ and 
\item
$\pi_{s}(M_{1})> \pi_{s}(M_{2})$ if $t_{s}=-$
\end{itemize}
for every projection $\pi_{s}$. Then $\varphi$ splits.
\end{mtheorem}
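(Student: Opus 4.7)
The plan is to exhibit a ``twisted'' strict order on $\mathbb{R}^m$ that (i) encodes the orientation hypothesis on $M_1, M_2$ and (ii) is preserved along the random orbit up to a global sign flip that does not affect disjointness of projections. In this way the splitting condition propagates from $n=0$ to every $n$.

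For $\epsilon \in \{+,-\}$, I define on $\mathbb{R}^m$ the strict partial order
\[
 x \prec_{\epsilon t} y \quad \iff \quad (\epsilon t_s)\, x_s < (\epsilon t_s)\, y_s \text{ for every } s = 1, \dots, m,
\]
and extend it to subsets by $A \prec_{\epsilon t} B$ iff $x \prec_{\epsilon t} y$ for every $x \in A$ and $y \in B$. With this notation the hypothesis of the theorem is exactly $M_1 \prec_t M_2$, and if I can show that $f^n_\omega(M_1) \prec_{\pm t} f^n_\omega(M_2)$ for every admissible $\omega$ and every $n \geq 0$, then for each $s$ the projections $\pi_s(f^n_\omega(M_1))$ and $\pi_s(f^n_\omega(M_2))$ are strictly ordered and hence disjoint, which is precisely \eqref{splitgene}.

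The heart of the argument is the following sign-tracking lemma, which supplies the inductive step.

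\textbf{Key Lemma.} If $f \in \mathcal{S}_M(t_1,\dots,t_m)$ and $x, y \in M$ satisfy $x \prec_{\epsilon t} y$, then $f(x) \prec_{\epsilon' t} f(y)$ with $\epsilon' \eqdef \epsilon\, t_1$.

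I would prove this by passing to an extension $\widehat f \in \mathcal{S}(t_1,\dots,t_m)$ and analysing one coordinate $f^s$ at a time. Unpacking the two-case definition of $\mathcal{S}(t_1,\dots,t_m)$, the monotonicity type of $\widehat f^s$ in variable $x_j$ is
\[
 \tau_j^s \;=\; t_s\, t_1\, t_j,
\]
i.e.\ $t_j$ when $t_s = t_1$ and $-t_j$ when $t_s \neq t_1$. The hypothesis $x \prec_{\epsilon t} y$ forces $\operatorname{sign}(y_j - x_j) = \epsilon t_j$, so interpolating from $x$ to $y$ one coordinate at a time and using strict monotonicity of $\widehat f^s$ gives an increment of sign
\[
 \tau_j^s \cdot (\epsilon t_j) \;=\; \epsilon\, t_s t_1 t_j^2 \;=\; \epsilon\, t_s t_1,
\]
independent of $j$. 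All coordinate increments therefore conspire, giving $\operatorname{sign}(f^s(y) - f^s(x)) = \epsilon t_s t_1 = \epsilon' t_s$, which is exactly the $s$-th coordinate inequality demanded by $f(x) \prec_{\epsilon' t} f(y)$.

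Given the Key Lemma, a straightforward induction on $n$ yields, for every admissible $\omega \in \Sigma_k$,
\[
 f^n_\omega(M_1) \prec_{t_1^n t}\, f^n_\omega(M_2) \qquad \text{for every } n \geq 0.
\]
Since $t_1^n t \in \{+t,-t\}$, the splitting condition \eqref{splitgene} follows as observed above. The main obstacle is extracting the clean product formula $\tau_j^s = t_s t_1 t_j$ from the two-case definition of $\mathcal{S}(t_1,\dots,t_m)$: it is precisely this identity that produces the $j$-independent sign $\epsilon t_s t_1$ and makes every coordinate-wise increment of $\widehat f^s$ pull in the same direction. Everything downstream of this cancellation is routine sign bookkeeping and mechanical induction.
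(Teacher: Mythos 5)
Your proposal is correct and is in essence the paper's own proof: the strict partial orders $\prec_{\pm t}$ are exactly the set $A(t_1,\dots,t_m)$ and its transpose, your Key Lemma with $\epsilon' = \epsilon t_1$ is a unified restatement of the paper's Claim~\ref{tclaim} (which separates the $t_1=+$ and $t_1=-$ cases), and the induction plus the disjointness-of-projections observation is the same closing step. The only difference is cosmetic: you make explicit the coordinate-by-coordinate interpolation and the sign identity $\tau_j^s = t_s t_1 t_j$ that the paper leaves to the reader.
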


In particular, under the hypotheses of the previous theorem and adequate assumptions on the transition matrix
(see Theorems \ref{stable} and \ref{t.mcsincronizacao}) it follows the exponential synchronization
and the asymptotic stability of the Markov operator. The case of
i.i.d. random products of maps in  $\widetilde{\mathcal{S}}_M(1,\dots, 1)$
 was treated in \cite{BhLe}, where the asymptotic stability of the Markov operator is obtained. Here 
 we consider the Markovian random  maps in the set $\widetilde{\mathcal{S}}_M$ defined by
$$
\widetilde{\mathcal{S}}_M \eqdef \bigcup_{(t_1,\ldots,t_m)\in \{+,-\}^m} \,
\widetilde{\mathcal{S}}_M(t_{1},\dots, t_{m}).
$$

\subsubsection{Minimal iterated function systems}
Given a compact subset $M\subset \mathbb{R}^{m}$ and finitely many continuous maps $f_{i}\colon M \to M$, $i=1,\dots, k$, we consider its associated  \emph{iterated function system} denoted by $\textrm{IFS}(f_{1},\dots,f_{k})$. Let $E_n=\{1,\dots,k\}^n$. The IFS is called 
\emph{minimal} if for every $x\in M$ it holds

$$
\mathrm{closure} \left( \bigcup_{n\in \mathbb{N}} \, \, \bigcup_{(a_1,\dots,a_n)\in E_n} f_{a_{n}}\circ\dots \circ f_{a_{1}}(x) \right)=M .
$$

The next result is a consequence of Theorem~\ref{specialclass}.
\begin{corollary}\label{c.corollaryefect}
Consider a compact set $M\subset \mathbb{R}^{m}$ with non-empty interior and
a random product  $\varphi(n,\omega,x)=f_{\omega}^{n}(x)$
over 
a Markov shift $(\Sigma_{k},\mathscr{F},\mathbb{P}, \sigma)$
 in $\widetilde{\mathcal{S}}_{M}$. Assume  $\mathbb{P}$ has full support, the $\mathrm{IFS}(f_{1},\dots,f_{k})$ is minimal, and there is a sequence $\omega\in\Sigma_{k}$ such that 
\begin{equation}\label{weaklyly}
\bigcap f_{\omega_{0}}\circ \dots \circ f_{\omega_{n}}(M) =\{p\}
\end{equation}
for some point $p$. Then $\varphi$ splits. 
\end{corollary}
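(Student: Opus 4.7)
The plan is to reduce Corollary \ref{c.corollaryefect} to Theorem \ref{specialclass}: I will exhibit admissible finite words $\alpha$ and $\beta$ ending in a common letter whose images $f_{\alpha}(M)$ and $f_{\beta}(M)$ are strictly ordered in every coordinate projection according to the monotonicity type $(t_{1},\dots,t_{m})$ of the system. Because $\mathbb{P}$ has full support, the transition matrix has strictly positive entries, so admissibility of any finite word is automatic and plays no role in the construction.

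Fix once and for all some letter $u\in\{1,\dots,k\}$. Since $f_{u}$ is the restriction of an injective continuous map $\mathbb{R}^{m}\to\mathbb{R}^{m}$, invariance of domain yields that $f_{u}(\mathrm{int}(M))$ is open in $\mathbb{R}^{m}$, hence a nonempty open subset of $M$. Inside it I choose two small open balls $V_{1},V_{2}$ whose coordinate projections $\pi_{s}(V_{1})$ and $\pi_{s}(V_{2})$ are strictly ordered in the sign dictated by $t_{s}$, for every $s=1,\dots,m$; the two preimages $U_{i}\eqdef f_{u}^{-1}(V_{i})\cap M$ are then nonempty open subsets of $M$.

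Next I invoke the remaining hypotheses in turn. By minimality of $\mathrm{IFS}(f_{1},\dots,f_{k})$, the forward orbit of $p$ is dense in $M$, so there exist finite words $\alpha'$ and $\beta'$ with $f_{\alpha'}(p)\in U_{1}$ and $f_{\beta'}(p)\in U_{2}$. Continuity of these compositions yields $\delta>0$ such that $f_{\alpha'}(B(p,\delta)\cap M)\subset U_{1}$ and $f_{\beta'}(B(p,\delta)\cap M)\subset U_{2}$. Finally, the nested compact sets in \eqref{weaklyly} shrink to $\{p\}$, so for $N$ sufficiently large the set $f_{\omega_{0}}\circ\cdots\circ f_{\omega_{N}}(M)$ lies inside $B(p,\delta)$. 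Setting $\tau=(\omega_{0},\dots,\omega_{N})$ and forming $\alpha=(\tau,\alpha',u)$ and $\beta=(\tau,\beta',u)$, I obtain admissible words ending in the common letter $u$ with $f_{\alpha}(M)\subset V_{1}$ and $f_{\beta}(M)\subset V_{2}$. The projection ordering required by Theorem \ref{specialclass} is inherited from that of $V_{1},V_{2}$, and the theorem then yields that $\varphi$ splits.

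The only genuine obstacle is that minimality, applied naively, produces two words reaching two desired regions but with uncontrolled last letter. I resolve this by the ``sandwich'' step above: place the two targets $V_{1},V_{2}$ inside $f_{u}(M)$, first contract $M$ to a tiny neighborhood of $p$ via $\tau$, then steer this tiny set into the preimages $U_{1},U_{2}$ using minimality plus continuity, and finally apply the common map $f_{u}$ to terminate both words with the same symbol~$u$.
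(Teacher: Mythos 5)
Your argument is correct and is precisely the reduction to Theorem~\ref{specialclass} that the paper leaves implicit (the authors only say the corollary ``is a consequence of Theorem~\ref{specialclass}'' without supplying the derivation). The key steps are all in order: invariance of domain, applied to the injective continuous extension $\widehat{f_u}$, makes $f_u(\mathrm{int}(M))$ a nonempty open set in $\mathbb{R}^m$, inside which one can certainly place two balls $V_1,V_2$ whose coordinate projections are strictly ordered according to the type $(t_1,\dots,t_m)$; their preimages $U_i=f_u^{-1}(V_i)\cap M$ are nonempty open in $M$; minimality gives finite compositions taking $p$ into $U_1$ and into $U_2$, and by continuity a whole $B(p,\delta)\cap M$ is mapped there; the nested sets in \eqref{weaklyly} have diameter tending to $0$ so some prefix $\omega_0,\dots,\omega_N$ maps all of $M$ into $B(p,\delta)\cap M$; and full support of $\mathbb{P}$ forces $p_i>0$ and $p_{ij}>0$ for all $i,j$, so every finite word is admissible. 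Terminating both concatenated words with the single letter $u$ gives $a_\ell=b_r=u$, and the containments $M_1\subset V_1$, $M_2\subset V_2$ transfer the required strict coordinate ordering, so Theorem~\ref{specialclass} applies.

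Two small points worth tightening in a written-up version. First, be careful with which end of a word is ``first'' versus ``last'': in Theorem~\ref{specialclass} the composition is $f_{a_\ell}\circ\cdots\circ f_{a_1}$, so $a_1$ is applied first and $a_\ell$ last, while \eqref{weaklyly} composes as $f_{\omega_0}\circ\cdots\circ f_{\omega_n}$ with $\omega_n$ innermost; when you concatenate $\tau$, $\alpha'$, $u$ you should write the resulting word with $\omega_N$ as $a_1$ and $u$ as $a_\ell$, the reverse of the order in which you listed $\tau$. Second, the claim that the nested sets eventually lie in $B(p,\delta)$ deserves one line: nested nonempty compacta whose intersection is a singleton have diameters $\to 0$ (otherwise a compactness argument produces two distinct points in the intersection). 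Neither issue affects the validity of the argument.
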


Condition \eqref{weaklyly} is guaranteed, for instance,  if 
 some composition $f_{a_{\ell}}\circ \dots \circ f_{a_{1}}$  is a contraction. 
Note that this condition  is compatible with the minimality of the IFS. 
Indeed, 
Corollary~\ref{c.corollaryefect} is mainly illustrative, since the minimality condition is harder to check than the splitting condition (which in many cases can be easily obtained).

\subsubsection{Injective maps in a box}
Let $Y$  be a compact metric space
 such that every 
(non-singleton) connected subset of $Y$ has non-empty interior. 
Let $\varphi(n,\omega,x)=f_{\omega}^{n}(x)$ be a Markovian random product defined on $Y$. Suppose that there is a compact subset (a box) $J\subset Y$ such that $f_{i}(J)\subset J$
and $f_{i|J}$ is injective
for every $i=1,\dots,k$.  If there exist admissible sequences $(a_{1},\dots,a_{\ell})$ and $(b_{1},\dots b_{r})$ with $a_{\ell}=b_{r}$, such that 
$$
\begin{array}{l}
f_{a_{\ell}}\circ\dots\circ f_{a_{1}}(Y)\cap f_{b_{r}}\circ\dots\circ f_{b_{1}}(Y)=\emptyset \quad 
\mbox{and}\\
f_{a_{\ell}}\circ\dots \circ f_{a_{1}}(Y)\cup f_{b_{r}}\circ\dots\circ f_{b_{1}}(Y)\subset J,
\end{array}
$$
 then $\varphi$ splits.

\section{Consequences of the splitting hypothesis}
\label{s.newconsequences}
We now explore consequences of the splitting condition for 
Markovian random products 
$\varphi(n,\omega,x)=f^{n}_{\omega}(x)$ on a general compact metric space $M\subset Y^{m}$ over an
irreducible
 Markov
shift $(\Sigma_{k},\mathscr{F},\mathbb{P}, \sigma)$.  Note that with these assumptions
$\mathbb{P}^-$ is well defined.

We begin with some general definitions.
Consider
the {\emph{cylinders}}
 $$
 [a_{0}\dots a_{\ell}]\eqdef \{\omega\in \Sigma_k \colon \omega_{0}=a_{0},\dots,\omega_{\ell}=a_{\ell}\}
 \subset\Sigma_k,
 $$ 
which is a semi-algebra that generates the Borel sigma-algebra of $\Sigma_k$.
Given a Markov measure $\mathbb{P}$ on $\Sigma_{k}$ 
with
 transition matrix $P=(p_{ij})$ and a stationary measure
$\bar p=(p_{1},\dots,p_{k})$ we have that 
 $$
 \mathbb{P}([a_{0}\ldots a_\ell])=
 p_{a_{0}}p_{a_{0}a_{1}}\ldots p_{a_{\ell-1}a_\ell}.
 $$ 
 A cylinder $C=[a_{0}a_{1}\dots a_{\ell}]$ is $\mathbb{P}$-\emph{admissible} if $\mathbb{P}(C)>0$.

For the random product $\varphi(n,\omega,x)=f_{\omega}^{n}(x)$ and the projection $\pi_{s}$ 
we define two family of subsets of $\Sigma_{k}$. First, for each $x\in \pi_{s}(M)$ and $n\geq 1$ 
let
\begin{equation}\label{e.snx} 
S_{n}^{x}(s)\eqdef \{\omega\in \Sigma_{k}\colon x\in\pi_{s}( f_{\omega_{0}}\circ\cdots\circ
f_{\omega_{n-1}}(M))\} 
\end{equation}
and observe that
\begin{equation}
\label{e.inclusionSn}
S_{n+1}^{x}(s) \subset S_n^{x}(s).
\end{equation}
Second, 
for each cylinder $C$ of size $N$ define
\begin{equation}\label{particao}
\Sigma_{n}^C\eqdef 
\{\omega\in \Sigma_{k} \colon \sigma^{iN}(\omega)
 \cap C=\emptyset\,\, \mbox{for all}\,\, i=0,\ldots ,n-1\}
  \end{equation}
 and note that
 \begin{equation}
\label{e.inclusioncylinders}
\Sigma_{n+1}^{C} \subset \Sigma_n^C.
\end{equation}
 
\begin{proposition} \label{p.l.menorouigual}
Let $M\subset Y^{m}$ be a compact metric space and 
$\varphi(n,\omega,x)=f^{n}_{\omega}(x)$ be
a random product on $M$ over an
irreducible  Markov shift $(\Sigma_{k},\mathscr{F},\mathbb{P},\sigma)$.
Let $[\xi_{0}\dots \xi_{N-1}]$
 and $[\eta_{0}\dots\eta_{N-1}]$ be
$\mathbb{P}^{-}$-admissible cylinders such that $\xi_{0}=\eta_{0}$, $\xi_{N-1}=\eta_{N-1}$, and
\begin{equation}\label{injective}
\pi_{s}(f_{\omega}^{n}( f_{\xi_{0}}\circ\dots\circ f_{\xi_{N-1}}(M)))\cap \pi_{s}(f_{\omega}^{n}(f_{\eta_{0}}\circ\dots\circ f_{\eta_{N-1}}(M)))=\emptyset
\end{equation}
 for every $\pi_{s}$, every $n\geq 0$, and every $\omega$.
If
\[
0<\mathbb{P}^{-}([\xi_{0}\ldots\xi_{N-1}])
\leq
 \mathbb{P}^{-}([\eta_{0}\ldots\eta_{N-1}]),
\]
then the cylinder $W\eqdef [\xi_{0}\ldots\xi_{N-1}]$ satisfies
$$
\mathbb{P}^{-}(W)>0 
\quad
\mbox{and}
\quad
 \mathbb{P}^{-}(S_{\ell N}^x(s))\leq \mathbb{P}^{-}(\Sigma_{\ell}^W),
 $$ 
  for every $\ell\geq 1$, every $s$ and every $x$.
\end{proposition}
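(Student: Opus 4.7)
The proof proceeds by partitioning the admissible length-$\ell N$ words into \emph{swap classes}: two words are equivalent if they differ only by toggling the block $W = [\xi_0\ldots\xi_{N-1}]$ with $W' := [\eta_0\ldots\eta_{N-1}]$ at certain positions among $0, N, 2N, \ldots, (\ell-1)N$. The endpoint matching $\xi_0=\eta_0$ and $\xi_{N-1}=\eta_{N-1}$ makes each toggle preserve $\mathbb{P}^-$-admissibility, because the two bridging $q$-factors $q_{\omega_{iN-1}\,\xi_0}$ and $q_{\xi_{N-1}\,\omega_{iN+N}}$ that connect a swapped block to its neighbors remain unchanged.

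Within a swap class, let $B \subset \{0,\ldots,\ell-1\}$ be the set of positions at which the word carries either $W$ or $W'$, and parametrize class elements by subsets $T \subset B$, with $T$ marking the positions currently carrying $W$. The Markov formula $\mathbb{P}^-([a_0\ldots a_{m}])=p_{a_0}q_{a_0 a_1}\cdots q_{a_{m-1} a_m}$, together with the endpoint matching, yields
\[
\mathbb{P}^-([\omega_T]) \;=\; C\cdot \alpha^{|T|}\beta^{|B|-|T|}, \qquad \alpha := \frac{\mathbb{P}^-(W)}{p_{\xi_0}},\quad \beta := \frac{\mathbb{P}^-(W')}{p_{\xi_0}},
\]
for a class-dependent constant $C>0$. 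The hypothesis $\mathbb{P}^-(W)\le \mathbb{P}^-(W')$ gives $\alpha\le\beta$, so the all-$W'$ element $\omega_\emptyset$ maximizes the class measure; moreover $\omega_\emptyset$ is the only class element lying in $\Sigma_\ell^W$, since any $T\neq\emptyset$ places a $W$ at some block position.

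The splitting hypothesis now forces that each swap class contains \emph{at most one} element $\omega_T$ in $S_{\ell N}^x(s)$. If $\omega_T$ and $\omega_{T'}$ first differ at block $i$, and $h := f_{a_0}\circ\cdots\circ f_{a_{iN-1}}$ denotes their common prefix composition, then
\[
f_{a_0}\circ\cdots\circ f_{a_{\ell N-1}}(M)\;\subset\; h\bigl(f_{\xi_0}\circ\cdots\circ f_{\xi_{N-1}}(M)\bigr),
\]
with the analogous inclusion for $\omega_{T'}$ into $h(f_{\eta_0}\circ\cdots\circ f_{\eta_{N-1}}(M))$. Reading the letters of $\omega_T|_{[0,iN-1]}$ in reverse produces a sequence $\omega'$ with $f^{iN}_{\omega'}=h$, so the splitting hypothesis forces these two enclosing sets to have disjoint $\pi_s$-projections; hence $x$ lies in at most one.

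Assembling the pieces: the swap classes partition the length-$\ell N$ cylinders, and their unique $\Sigma_\ell^W$-representatives are exactly the $[\omega_\emptyset]$'s. Summing the class-by-class inequality $\mathbb{P}^-([\omega_{T_*}]\cap S_{\ell N}^x(s))\le \mathbb{P}^-([\omega_{T_*}]) \le \mathbb{P}^-([\omega_\emptyset])$ yields $\mathbb{P}^-(S_{\ell N}^x(s))\le \mathbb{P}^-(\Sigma_\ell^W)$; the positivity $\mathbb{P}^-(W)>0$ is already part of the hypothesis. The main obstacle I expect is bookkeeping: aligning the reversed-order composition $f^n_\omega$ appearing in the splitting hypothesis with the forward-order composition used in $S_n^x$, and checking that every transition factor touched by a swap is preserved via the endpoint matching $\xi_0=\eta_0$, $\xi_{N-1}=\eta_{N-1}$.
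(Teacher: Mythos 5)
Your proposal is correct and takes essentially the same route as the paper's proof: your swap classes are exactly the fibres of the paper's substitution map $F_\ell$ (which replaces every block $W$ at a position $0,N,\ldots,(\ell-1)N$ by $W'$), your measure formula $\mathbb{P}^-([\omega_T])=C\,\alpha^{|T|}\beta^{|B|-|T|}$ with $\alpha\le\beta$ is the paper's Claim~\ref{c.substitutionmap1}, and your ``at most one class member in $S_{\ell N}^x(s)$'' is precisely the injectivity of $F_\ell$ from Lemma~\ref{l.c.substitutionmap}, proved by the same first-discrepancy-plus-splitting argument.
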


\begin{proof}
By the  admissibility of
 $[\xi_{0}\dots \xi_{N-1}]$ we have
\begin{equation}\label{e.forinstance}
0<\mathbb{P}^{-}([\xi_{0}\ldots\xi_{N-1}])
\leq
 \mathbb{P}^{-}([\eta_{0}\ldots\eta_{N-1}]).
 \end{equation}  

Fix a projection $\pi_{s}$. Take $x\in \pi_{s}(M)$ and  for each $\ell\geq 1$ 
define the following two families of cylinders:
 \begin{equation}\label{e.sigmaxj}
 \Sigma_{x}^{\ell}(s)\eqdef\{[a_{0}\ldots a_{\ell N-1}]\subset
  \Sigma_{k}\colon x\in \pi_{s}(f_{a_{0}}\circ\cdots\circ
f_{ a_{\ell N-1}}(M))\}
 \end{equation}
 and 
 \begin{equation}
 \label{e.Eell}
 E^{\ell}\eqdef\{[a_{0}\ldots a_{\ell N-1}]\subset
  \Sigma_{k}\colon  \sigma^{iN}([a_{0}\ldots a_{\ell N-1}])
  \cap W=\emptyset,\, \forall \,i=0,\ldots ,\ell-1\} .
  \end{equation}
Note that
 \begin{equation}
 \label{e.twoidentities}
 S_{\ell N}^{x}(s)=
  \bigcup_{C
 \in\Sigma_{x}^{\ell}(s)} C \quad \mbox{and}\quad \Sigma_{\ell}^W=\bigcup_{C
 \in E_{\ell}} C.
\end{equation}

 For each $\ell\geq 1$ we now  introduce a ``substitution function" 
 $F_{\ell}: \Sigma_{x}^{\ell}(s)\to E^{\ell}$.
First, for each cylinder $C=[\alpha_{0}\dots \alpha_{\ell N-1}]\in \Sigma_{x}^{\ell}(s)$ 
 we consider its sub-cylinders
 $[\alpha_0\dots \alpha_{N-1}]$, $[\alpha_N\dots \alpha_{2N-1}], \dots, 
 [\alpha_{(\ell-1)N}\dots \alpha_{\ell N-1}]$
 and use the   concatenation notation
 $$
  [\alpha_0\dots \alpha_{\ell N-1}]\eqdef
   [\alpha_0\dots \alpha_{N-1}]\ast [\alpha_N\dots \alpha_{2N-1}]\ast
 \cdots  \ast [\alpha_{(\ell-1)N}\dots \alpha_{\ell N-1}].
 $$
 In a compact way, we write
 $$
 C=C_0 \ast C_1 \ast \cdots \ast C_{\ell-1},
 \quad C_i\eqdef  [\alpha_{iN}\dots \alpha_{(i+1)N-1}].
 $$
  With this notation we define $F_\ell$ by
 $$
 F_\ell(C)\eqdef
 F_\ell(C_0 \ast C_1 \ast \cdots \ast C_{\ell-1})=
 C_0' \ast C_1' \ast \cdots \ast C_{\ell-1}',
 $$
 where $C_i'=C_i$ if $C_i\ne [\xi_0\dots \xi_{N-1}]$ and 
 $C_i'=[\eta_0\dots \eta_{N-1}]$ otherwise.
 Note that by definition  $ F_\ell(C)\subset E^\ell$ for each $C\in\Sigma_x^\ell(s)$
 and hence the map is well defined.

\begin{claim}\label{c.substitutionmap1}
For every
$C\in \Sigma_{x}^{\ell}(s)$ it holds
$\mathbb{P}^{-}(C)\leq \mathbb{P}^{-} (F_{\ell}(C))$.  
\end{claim}  

\begin{proof}
Recalling that
 $\eta_{0}=\xi_{0}$ and $\eta_{N-1}=\xi_{N-1}$,
 from equation~\eqref{e.forinstance}
 we immediately get the following:
 For every  $r, j\ge 0$ and  every pair of cylinders $[a_{0}\ldots a_{j}]$ and 
 $[b_{0}\ldots b_{r}]$ it holds
\begin{enumerate} 
\item 
 $\mathbb{P}^{-}([a_{0}\ldots a_{j}\xi_{0}\ldots\xi_{N-1}
 b_{0}\ldots b_{r}])\leq 
 \mathbb{P}^{-}([a_{0}\ldots a_{j}\eta_{0}\ldots\eta_{N-1}
 b_{0}\ldots b_{r}])$,
 \item$
  \mathbb{P}^{-}([\xi_{0}\ldots\xi_{N-1}
 b_{0}\ldots b_{r}])\leq 
 \mathbb{P}^{-}([\eta_{0}\ldots\eta_{N-1}
 b_{0}\ldots b_{r}])$, and
 \item
  $
  \mathbb{P}^{-}([a_{0}\ldots a_{j}\xi_{0}\ldots\xi_{N-1}]\leq 
 \mathbb{P}^{-}([a_{0}\ldots a_{j}\eta_{0}\ldots\eta_{N-1}].
  $
  \end{enumerate}
The inequality $\mathbb{P}^{-}(C)\leq \mathbb{P}^{-} (F_{\ell}(C))$ now follows from the definition of
$F_\ell$.
 \end{proof}

\begin{lemma}\label{l.c.substitutionmap}
The map $F_{\ell}$ is injective.
\end{lemma}  

\begin{proof}
Using the concatenation notation above, consider cylinders 
  $ C=C_0 \ast C_1 \ast \cdots \ast C_{\ell-1}$ 
  and $\widetilde C =\widetilde C_0 \ast \widetilde C_1 \ast \cdots  \ast \widetilde C_{\ell-1}$
  with $C\ne \widetilde C$ in $\Sigma_{x}^{\ell}(s)$. Write
  $$
 F_\ell(C)=C_0' \ast C_1' \ast \cdots \ast C_{\ell-1}'
 \quad
 \mbox{and}
 \quad
 F_\ell(\widetilde C )=\widetilde C_0' \ast \widetilde C_1' \ast \cdots  \ast \widetilde C_{\ell-1}'.
$$

 Assume, by contradiction, that
$F_{\ell}(C)=F_{\ell}(\widetilde{C})$ and hence
$C_i'=\widetilde C_i'$ for all $i=0,\dots, N-1$. 
Since $C\ne \widetilde C$
there is a first  $i$ such that $C_i\ne \widetilde C_i$. 
Then, by the definition of $F_\ell$, either $C_i= [\xi_0\dots \xi_{N-1}]$ and 
$\widetilde C_i= [\eta_0\dots \eta_{N-1}]$  or vice-versa. Let us assume that the
first case occurs.

If  $i=0$ then 
the definition of $\Sigma_{x}^{\ell}(s)$ 
implies that
$$
x\in \pi_{s}(f_{\xi_0}\circ \cdots \circ f_{\xi_{N-1}} (M)) \cap
\pi_{s}(f_{\eta_0}\circ \cdots \circ f_{\eta_{N-1}} (M)),
$$
contradicting the hypothesis in  \eqref{injective}. Thus we can assume that $i>0$.
Write $(i-1)N-1=r$ and consider the cylinders
\[
\begin{split}
[\gamma_{0}\ldots \gamma_{r}] & \eqdef C_0 \ast C_1 \ast \cdots \ast C_{i-1}=
\widetilde C_0 \ast \widetilde C_1 \ast \cdots  \ast \widetilde C_{i-1},\\
[\gamma_{r+N}\ldots \gamma_{\ell N-1}] & \eqdef C_{i+1} \ast C_1 \ast \cdots \ast C_{\ell-1},\\
[\widetilde \gamma_{r+N}\ldots \widetilde \gamma_{\ell N-1}] & \eqdef
\widetilde C_{i+1} \ast C_1 \ast \cdots \ast \widetilde C_{\ell-1}
\end{split}
\]
and the corresponding finite sequences
\[
\begin{split}
\gamma_0 \cdots \gamma_{\ell N-1} & \eqdef
 {\gamma_0}  \dots {\gamma_{r}} \, {\xi_0} \cdots {\xi_{N-1}}\,
{\gamma_{r+N}}  \cdots \gamma_{\ell N-1},\\
\widetilde \gamma_0 \cdots  \widetilde \gamma_{\ell N-1} & \eqdef
 {\gamma_0}  \dots {\gamma_{r}} \, {\eta_0} \cdots {\eta_{N-1}}\,
\widetilde \gamma_{r+N}  \cdots \widetilde \gamma_{\ell N-1}.
\end{split}
\]
Since  $f_{i}(M)\subset M$ we have
\[
\begin{split}
 f_{\gamma_0} \circ \cdots \circ f_{\gamma_{\ell N-1}}
(M)
&\subset
 f_{\gamma_0} \circ \cdots \circ f_{\gamma_{r}}
\circ f_{\xi_0}\circ \cdots \circ f_{\xi_{N-1}} (M),\\
 f_{\widetilde \gamma_0} \circ \cdots \circ f_{\widetilde \gamma_{\ell N-1}}
(M)
&\subset
 f_{\gamma_0} \circ \cdots \circ f_{\gamma_{r}}
\circ f_{\eta_0}\circ \cdots \circ f_{\eta_{N-1}} (M).
\end{split}
\]
Hence,
by the definition of $\Sigma_x^\ell(s)$ in \eqref{e.sigmaxj}, we have 
\[
\label{e.contradiction18abril}
x\in \pi_{s}(f_{\gamma_0} \circ \cdots \circ f_{\gamma_{r}}
\circ f_{\xi_0}\circ \cdots \circ f_{\xi_{N-1}} (M)) \cap \pi_{s}(f_{\gamma_0} \circ \cdots \circ f_{\gamma_{r}}
\circ f_{\eta_0}\circ \cdots \circ f_{\eta_{N-1}} (M)),
\]
contradicting \eqref {injective}.
Thus $C=\widetilde C$, proving the lemma.
\end{proof}

Condition $\mathbb{P}^{-}(W)>0$ follows from the choice of $W$.
To prove that
$\mathbb{P}^{-}(S_{\ell N}^x(s))\leq\mathbb{P}^{-}(\Sigma_{\ell }^W)$ note that
 \[
 \begin{split}
 \mathbb{P}^{-}(S_{\ell N}^x(s))
 &\underset{(\textrm{a})}{=}\sum_{C\in\Sigma^{\ell}_{x}(s)} \mathbb{P}^{-}(C)
  \underset{(\textrm{b})}{\leq}
 \sum_{C\in\Sigma^{\ell}_{x}(s)} \mathbb{P}^{-}(F_{\ell}(C))
\\
   &
  \underset{(\textrm{c})}{=}\mathbb{P}^{-}\big( \bigcup_{C\in\Sigma^{\ell}_{x}(s)} F_{\ell}(C)
  \big) \underset{(\textrm{d})}{\le}  \mathbb{P}^{-} (\Sigma_{\ell }^W),
  \end{split}
  \] 
  where (a) follows from the disjointedness of the cylinders $C\in \Sigma_x^j(s)$,
  (b)
   from  Claim~\ref{c.substitutionmap1}, 
   (c)
 from the injectivity of $F_{j}$ (Lemma~\ref{l.c.substitutionmap}), and 
  (d) from  $F_{j}(C)\in E_j \subset Q^{j}$.
The proof of  the proposition is now complete. 
\end{proof}

\section{Consequences of the splitting hypothesis in $m$-dimensional spaces}
\label{s.consequences}
Throughout this section $\varphi$ denotes a  random product  on a compact  and connected
metric space $M\subset Y^{m}$ over a Markov shift $(\Sigma_{k},\mathscr{F},\mathbb{P},\sigma)$. To state the main theorem of this section (Theorem~\ref{abundance}) we need
some definitions.

For each $\xi\in \Sigma_{k}$ we consider its \emph{fibre} defined by 
$$
I_{\xi}\eqdef \bigcap_{n\geq 0}f_{\xi_{0}}\circ \dots\circ f_{\xi_{n}}(M).
$$
Every fibre is a non-empty compact set: note that  $(f_{\xi_{0}}\circ \dots\circ f_{\xi_{n}}(M))_{n\in \mathbb{N}}$ is a 
sequence of nested compact sets. 
Also note that 
$I_\xi$ is a connected set.

The subset $S_{\varphi}\subset \Sigma_{k}$ of \emph{weakly hyperbolic sequences} is defined by 
\begin{equation}\label{weakly}
S_{\varphi}\eqdef \{\xi \in \Sigma_{k}\colon I_{\xi}\,\, \text{is a singleton}\}.
\end{equation}

The main result in this section is the following:

\begin{theorem}\label{abundance} 
Let $Y$ be a separable metric space such that
 every (non-singleton) connected subset of $Y$ has non-empty interior. 
Consider a random product $\varphi$  over a 
primitive Markov shift $(\Sigma_{k},\mathscr{F},\mathbb{P},\sigma)$
defined
on a compact and connected subset $M$ of $Y^{m}$. Suppose that $\varphi$ splits.
 Then
$\mathbb{P}^{-}(S_{\varphi})=1$.
\end{theorem}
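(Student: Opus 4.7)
The plan is to reduce the conclusion to Proposition~\ref{p.l.menorouigual} by constructing, using primitivity, two admissible cylinders of equal length that share both endpoints and inherit the splitting behaviour from $M_1$ and $M_2$.

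First I would note that each fibre $I_\xi$ is a non-empty compact connected subset of $Y^m$, since $(f_{\xi_0}\circ\cdots\circ f_{\xi_n}(M))_n$ is a decreasing sequence of continuous images of the connected compact set $M$. Thus $\xi\notin S_\varphi$ forces some projection $\pi_s(I_\xi)$ to be a non-singleton connected subset of $Y$, which by hypothesis has non-empty interior. Fix a countable dense subset $\{x_j\}_{j\in\mathbb{N}}\subset Y$. The interior of $\pi_s(I_\xi)$ must contain some $x_j$, and since $\pi_s$ commutes with decreasing intersections of compact sets we obtain $x_j\in\pi_s(f_{\xi_0}\circ\cdots\circ f_{\xi_{n-1}}(M))$ for every $n$, i.e.\ $\xi\in S_n^{x_j}(s)$ for every $n\ge 1$. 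Hence
$$
S_\varphi^c\subset \bigcup_{s=1}^m\bigcup_{j\in\mathbb{N}}\bigcap_{n\ge 1}S_n^{x_j}(s),
$$
and it suffices to show $\mathbb{P}^-\bigl(\bigcap_n S_n^x(s)\bigr)=0$ for every $s$ and every $x$.

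Next I would construct cylinders meeting the hypotheses of Proposition~\ref{p.l.menorouigual}. The $\mathbb{P}$-admissible splitting sequences $(a_1,\dots,a_\ell)$ and $(b_1,\dots,b_r)$ have $a_\ell=b_r$, and reversing them produces $\mathbb{P}^-$-admissible words (since $q_{ij}>0\iff p_{ji}>0$) that both begin at $a_\ell$. Primitivity of $P$ immediately gives primitivity of $Q$, so for every sufficiently large integer there is a $\mathbb{P}^-$-admissible path of that length between any two states. Fixing an auxiliary state $v^*$ and a common large length $N$, I extend the two reversed sequences by $\mathbb{P}^-$-admissible paths from $a_1$ and from $b_1$ to $v^*$, producing cylinders $[\xi_0\cdots\xi_{N-1}]$ and $[\eta_0\cdots\eta_{N-1}]$, both $\mathbb{P}^-$-admissible, with $\xi_0=\eta_0=a_\ell$ and $\xi_{N-1}=\eta_{N-1}=v^*$. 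Because $f_i(M)\subset M$ the extensions satisfy $f_{\xi_0}\circ\cdots\circ f_{\xi_{N-1}}(M)\subset M_1$ and analogously for $\eta$ with $M_2$, so the splitting property~\eqref{splitgene} transfers to condition~\eqref{injective}. Arranging without loss of generality that $\mathbb{P}^-([\xi_0\cdots\xi_{N-1}])\le\mathbb{P}^-([\eta_0\cdots\eta_{N-1}])$ and taking $W=[\xi_0\cdots\xi_{N-1}]$, Proposition~\ref{p.l.menorouigual} yields
$$
\mathbb{P}^-(S_{LN}^x(s))\le \mathbb{P}^-(\Sigma_L^W) \quad\text{for every } L\ge 1.
$$

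Finally I would argue that $\mathbb{P}^-(\Sigma_L^W)\to 0$. Primitivity of $P$ (equivalently of $Q$) implies that the Markov shift $(\Sigma_k,\mathscr{F},\mathbb{P}^-,\sigma)$ is mixing, so $\sigma^N$ is ergodic, and since $\mathbb{P}^-(W)>0$ the Poincar\'e recurrence theorem forces $\mathbb{P}^-$-a.e.\ orbit of $\sigma^N$ to visit $W$; hence $\mathbb{P}^-(\bigcap_L\Sigma_L^W)=0$. Combined with the monotonicity \eqref{e.inclusionSn} and the bound above, this gives $\mathbb{P}^-(\bigcap_n S_n^x(s))=0$ for every $x$ and $s$, whence $\mathbb{P}^-(S_\varphi)=1$. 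The main obstacle I expect is the bookkeeping in the cylinder construction: navigating the forward-versus-inverse admissibility, matching lengths and endpoints via primitivity, and checking that the extended cylinders still satisfy the splitting equation~\eqref{injective}; the remaining ingredients are a standard topological fact about nested compact connected sets and a routine ergodicity input.
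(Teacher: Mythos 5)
Your proposal is correct and follows essentially the same route as the paper's proof: the same reduction of $S_\varphi^c$ to a countable union of the sets $\bigcap_n S_n^x(s)$ via connectivity of $\pi_s(I_\xi)$ and a countable dense subset of $Y$, the same construction of two equal-length $\mathbb{P}^-$-admissible cylinders with matching endpoints by reversing the splitting words and extending them via primitivity to a common state, the same appeal to Proposition~\ref{p.l.menorouigual}, and the same ergodicity input to force $\mathbb{P}^-(\Sigma_\infty^W)=0$ (the paper invokes Birkhoff, you invoke Poincar\'e recurrence together with ergodicity of $\sigma^N$, which amounts to the same thing). The only cosmetic deviation is that the paper routes its extensions through the fixed symbol $1$ while you use an auxiliary state $v^*$, and you invoke the fact that $\pi_s$ commutes with nested compact intersections, which is true but not actually needed since only the trivial inclusion $\pi_s(I_\xi)\subset\pi_s(f_{\xi_0}\circ\cdots\circ f_{\xi_{n-1}}(M))$ is used.
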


This theorem is an important step of the proof of Theorem \ref{stable} 
and its proof is inspired by the ideas in \cite{Yuri}.

 \subsection{Proof of Theorem  \ref{abundance}}
 \label{ss.proofofabundance}
 
Fix $s$. Given $x\in \pi_{s}(M)$ define the set 
$$
\Sigma_{x}(s)\eqdef \{\xi\in \Sigma_{k}\colon x\in \pi_{s}(I_{\xi})\}.
$$
Consider also the set $S_{\varphi}(s)\eqdef\{\xi\in \Sigma_k \colon \pi_{s}(I_{\xi})\, \, \mbox{is a singleton}\}$.
Since $Y$ is separable there is a dense and countable subset $D$ of $Y$. 
Note that if
$\xi\not\in S_\varphi(s)$ then the set $\pi_{s}(I_\xi)$ is a connected subset of $Y$ which is not a singleton and hence, 
by hypothesis, its interior is not empty and thus
$\pi_{s}(I_\xi)$ contains
a point of $D$. This implies that
\begin{equation}\label{prin2}
( S_{\varphi}(s))^c=
\Sigma_{k}\setminus S_{\varphi}(s)\subset \bigcup_{x\in D\cap \pi_{s}(M)}\Sigma_{x}(s).
\end{equation}

\begin{proposition}\label{p.pzero}
$\mathbb{P}^{-}(\Sigma_x(s))=0$ for every $x\in  \pi_{s}(M)$.
\end{proposition}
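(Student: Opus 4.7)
The plan is to bound $\mathbb{P}^-(\Sigma_x(s))$ by a quantity furnished by Proposition~\ref{p.l.menorouigual} and then use ergodicity of the shift under $\mathbb{P}^-$ to see that the bound vanishes.

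First, I would re-express $\Sigma_x(s)$ in terms of the nested family $S_n^x(s)$. Since $\pi_s$ is continuous and $\{f_{\xi_0}\circ\cdots\circ f_{\xi_n}(M)\}_{n\ge 0}$ is a decreasing sequence of non-empty compact sets,
\[
\pi_s(I_\xi)\;=\;\bigcap_{n\ge 0}\pi_s\bigl(f_{\xi_0}\circ\cdots\circ f_{\xi_n}(M)\bigr).
\]
Thus $\xi\in\Sigma_x(s)$ iff $\xi\in S_n^x(s)$ for every $n\ge 1$, so $\Sigma_x(s)=\bigcap_{n\ge 1}S_n^x(s)$, and by \eqref{e.inclusionSn} together with continuity of $\mathbb{P}^-$ from above,
\[
\mathbb{P}^-(\Sigma_x(s))\;=\;\lim_{n\to\infty}\mathbb{P}^-(S_n^x(s)).
\]

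Next I would construct cylinders satisfying the hypotheses of Proposition~\ref{p.l.menorouigual}. The splitting condition yields $P$-admissible sequences $(a_1,\ldots,a_\ell)$ and $(b_1,\ldots,b_r)$ with $a_\ell=b_r$ and the disjointness \eqref{splitgene}. Using primitivity, I would fix $n_0$ such that every entry of $P^{n_0}$ is positive, pick any state $u$, set $N=\max(\ell,r)+n_0+1$, and select $P$-admissible paths from $u$ of lengths $N-\ell$ and $N-r$ ending at predecessors of $a_1$ and $b_1$ respectively. Concatenating these with $(a_1,\ldots,a_\ell)$ and $(b_1,\ldots,b_r)$ produces two $P$-admissible forward itineraries of common length $N$, each starting at $u$ and ending at $a_\ell=b_r$. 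Reversing them yields $\mathbb{P}^-$-admissible cylinders $V_1=[\xi_0\ldots\xi_{N-1}]$ and $V_2=[\eta_0\ldots\eta_{N-1}]$ with $\xi_0=\eta_0=a_\ell$ and $\xi_{N-1}=\eta_{N-1}=u$. Since $f_i(M)\subset M$, one checks that $f_{\xi_0}\circ\cdots\circ f_{\xi_{N-1}}(M)\subset M_1$ and $f_{\eta_0}\circ\cdots\circ f_{\eta_{N-1}}(M)\subset M_2$, so \eqref{splitgene} forces \eqref{injective} for $V_1,V_2$. After possibly swapping $V_1,V_2$ so that $\mathbb{P}^-(V_1)\le \mathbb{P}^-(V_2)$, setting $W:=V_1$, Proposition~\ref{p.l.menorouigual} gives
\[
\mathbb{P}^-(S_{\ell N}^x(s))\;\le\;\mathbb{P}^-(\Sigma_\ell^W),\qquad \ell\ge 1.
\]

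Finally I would show $\mathbb{P}^-(\Sigma_\ell^W)\to 0$. Put $\Sigma_\infty^W=\bigcap_\ell \Sigma_\ell^W$; the inclusion $\sigma^N(\Sigma_\infty^W)\subset \Sigma_\infty^W$ is immediate from the definition, and $\sigma^N$-invariance of $\mathbb{P}^-$ makes $\Sigma_\infty^W$ essentially $\sigma^N$-invariant. Because $P$ is primitive, so is the reverse matrix $Q$ in \eqref{e.Q} (since $(Q^n)_{ij}>0$ iff $(P^n)_{ji}>0$), hence $\mathbb{P}^-$ is mixing under $\sigma$ and a fortiori ergodic under $\sigma^N$. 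Therefore $\mathbb{P}^-(\Sigma_\infty^W)\in\{0,1\}$; it cannot equal $1$ because $\Sigma_\infty^W\cap W=\emptyset$ while $\mathbb{P}^-(W)>0$, so it vanishes. Combining the three steps,
\[
\mathbb{P}^-(\Sigma_x(s))\;=\;\lim_\ell \mathbb{P}^-(S_{\ell N}^x(s))\;\le\;\lim_\ell \mathbb{P}^-(\Sigma_\ell^W)\;=\;\mathbb{P}^-(\Sigma_\infty^W)\;=\;0.
\]
The hard part will be the cylinder construction in the second paragraph: padding the splitting sequences via primitivity so that the two cylinders share the same length and the same first and last symbols, while correctly reversing the orders to pass between $P$-admissibility of forward itineraries (which is what the splitting gives) and $\mathbb{P}^-$-admissibility of cylinders (which is what Proposition~\ref{p.l.menorouigual} requires).
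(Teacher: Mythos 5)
Your proof is correct and follows essentially the same route as the paper's: reduce $\Sigma_x(s)$ to a limit of $\mathbb{P}^-(S_n^x(s))$, use primitivity to pad the splitting sequences into two equal-length $\mathbb{P}^-$-admissible cylinders with matching endpoints, apply Proposition~\ref{p.l.menorouigual} to bound $\mathbb{P}^-(S_{\ell N}^x(s))$ by $\mathbb{P}^-(\Sigma_\ell^W)$, and show the latter tends to zero by ergodicity of $\sigma^N$ under $\mathbb{P}^-$. The only cosmetic differences are that you observe the equality $\Sigma_x(s)=\bigcap_n S_n^x(s)$ via compactness where the paper uses only the inclusion, and you conclude $\mathbb{P}^-(\Sigma_\infty^W)=0$ from $\sigma^N$-invariance and the $0$--$1$ law rather than citing Birkhoff explicitly; both are equivalent in substance.
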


In view of \eqref{prin2} this proposition implies  that $\mathbb{P}^{-}(S_{\varphi}(s))=1$. 
The theorem follows noting that 
$
S_{\varphi}=\bigcap_{s=1}^{n} S_{\varphi}(s).
$


\begin{proof}[Proof of Proposition~\ref{p.pzero}]
Fix $x\in \pi_{s}(M)$, recall the definition of $S_{n}^{x}(s)$ in \eqref{e.snx} and note
that for every
$n\geq 1$ it holds
$\Sigma_x(s)\subset S_{n}^{x}(s)$. Hence
$$
\Sigma_x(s) \subset \bigcap_{n\ge 1} S_{n}^x(s).
$$
Therefore, recalling that $S_{n+1}^{x}(s)\subset S_{n}^{x}(s)$, \eqref{e.inclusionSn}, it follows
\begin{equation}\label{SS}
\mathbb{P}^{-} (\Sigma_x(s)) \le 
\mathbb{P}^{-} \Big( \bigcap_{n\ge 1} S_{n}^x(s) \Big) =\lim_{n\to \infty} \mathbb{P}^{-} (S_n^x(s)).
\end{equation}
Hence to prove the proposition it is enough to see that 

\begin{lemma} \label{l.limite}
$\lim_{n\to \infty} \mathbb{P}^{-} (S_n^x(s))=0$. 
\end{lemma}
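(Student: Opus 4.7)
The plan is to apply Proposition \ref{p.l.menorouigual} to a suitable pair of cylinders and then bound the resulting $\mathbb{P}^-(\Sigma_\ell^W)$ using ergodicity of the shift $(\Sigma_k,\mathbb{P}^-,\sigma)$. First, I would construct the two cylinders required by that proposition. From the splitting condition we are given $P$-admissible sequences $(a_1,\ldots,a_\ell)$ and $(b_1,\ldots,b_r)$ with common final symbol $a_\ell=b_r$. Since $q_{ij}>0\iff p_{ji}>0$ by \eqref{e.Q}, the reversals $(\xi_0,\ldots,\xi_{\ell-1})\eqdef(a_\ell,\ldots,a_1)$ and $(\eta_0,\ldots,\eta_{r-1})\eqdef(b_r,\ldots,b_1)$ are $\mathbb{P}^-$-admissible and share initial symbol $\xi_0=\eta_0=a_\ell$; moreover $f_{\xi_0}\circ\cdots\circ f_{\xi_{\ell-1}}(M)=M_1$ and $f_{\eta_0}\circ\cdots\circ f_{\eta_{r-1}}(M)=M_2$. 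To force equal length and a common last symbol, I exploit primitivity of $P$: fix $m_0$ such that every entry of $P^n$ is positive for every $n\ge m_0$, choose any state $c$ and any $N\ge \max(\ell,r)+m_0$, and prolong each reversed sequence by a $\mathbb{P}^-$-admissible tail ending at $c$ (equivalently, by a $P$-admissible path of the correct length in the reversed chain, which exists by primitivity). This produces $\mathbb{P}^-$-admissible cylinders $W_1=[\xi_0\ldots\xi_{N-1}]$ and $W_2=[\eta_0\ldots\eta_{N-1}]$ of equal length with matching first and last symbols. Because $f_i(M)\subset M$ for every $i$, we still have $f_{\xi_0}\circ\cdots\circ f_{\xi_{N-1}}(M)\subset M_1$ and $f_{\eta_0}\circ\cdots\circ f_{\eta_{N-1}}(M)\subset M_2$, so the splitting condition \eqref{splitgene} immediately yields \eqref{injective} for $W_1$ and $W_2$.

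Next, relabeling the two cylinders if necessary so that $\mathbb{P}^-(W_1)\le \mathbb{P}^-(W_2)$, I set $W\eqdef W_1$; Proposition \ref{p.l.menorouigual} then delivers
\[
\mathbb{P}^-(S_{\ell N}^x(s))\le \mathbb{P}^-(\Sigma_\ell^W)\qquad\text{for every }\ell\ge 1.
\]
Since $P$ is primitive, so is $Q$, and hence the Markov shift $(\Sigma_k,\sigma,\mathbb{P}^-)$ is mixing and $\sigma^N$ is ergodic for every $N$. Observing that $\Sigma_\ell^W=\bigcap_{i=0}^{\ell-1}\sigma^{-iN}(W^c)$ and applying Birkhoff's ergodic theorem to the system $(\sigma^N,\mathbb{P}^-)$ with observable $\mathbf{1}_W$ gives, for $\mathbb{P}^-$-a.e.\ $\omega$,
\[
\frac{1}{\ell}\sum_{i=0}^{\ell-1}\mathbf{1}_W(\sigma^{iN}\omega)\longrightarrow \mathbb{P}^-(W)>0,
\]
so almost every $\omega$ satisfies $\sigma^{iN}\omega\in W$ for infinitely many $i$. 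Therefore $\mathbb{P}^-\bigl(\bigcap_\ell \Sigma_\ell^W\bigr)=0$, and continuity of measure together with the nesting \eqref{e.inclusioncylinders} yield $\mathbb{P}^-(\Sigma_\ell^W)\to 0$. Combined with the estimate above this gives $\mathbb{P}^-(S_{\ell N}^x(s))\to 0$, and the monotonicity \eqref{e.inclusionSn} of the sequence $(S_n^x(s))_n$ promotes this subsequential limit to the full limit $\mathbb{P}^-(S_n^x(s))\to 0$, proving the lemma.

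The delicate step is the cylinder construction: one must remember that $\mathbb{P}^-$-admissibility reverses the orientation of sequences compared to the forward iteration of $\varphi$, match the first \emph{and} the last symbols of the two cylinders simultaneously by invoking primitivity of $P$, and verify that the enlarged cylinders still inherit the splitting equation \eqref{injective} from \eqref{splitgene}. Once this is in place, both the application of Proposition \ref{p.l.menorouigual} and the ergodic-theoretic control of $\mathbb{P}^-(\Sigma_\ell^W)$ are essentially routine.
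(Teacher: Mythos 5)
Your proposal is correct and follows essentially the same route as the paper: reverse and extend the sequences (via primitivity) to produce equal-length $\mathbb{P}^{-}$-admissible cylinders with matching first and last symbols (this is the paper's Claim~\ref{c.claimera52}), feed them into Proposition~\ref{p.l.menorouigual}, and then use ergodicity of $(\Sigma_k,\sigma^N,\mathbb{P}^-)$ together with the nesting of the sets $\Sigma_\ell^W$ and $S_n^x(s)$ to conclude. The cylinder construction is organized a bit differently (you extend directly to a common endpoint, while the paper routes through specific $[1 c_n\dots c_1 a_1]$ cylinders and tunes two lengths), and you spell out the Birkhoff argument more explicitly, but these are cosmetic variations of the same proof.
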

%
%
%

\begin{proof}
  By the splitting hypothesis there is a pair of $\mathbb{P}$-admissible cylinders $[a_{1}\dots a_{\ell}]$ and 
 $[b_{1}\dots b_{r}]$ with $a_{\ell}=b_{r}$ such that 
 
 \begin{equation}\label{e.splittingcondition}
\pi_{s}(f_{\omega}^{n}( f_{a_{\ell}}\circ\dots\circ f_{a_{1}}(M)))\cap \pi_{s}(f_{\omega}^{n}(f_{b_{r}}\circ\dots\circ f_{b_{1}}(M)))=\emptyset
\end{equation}
for every  $\pi_{s}$,  every $n\geq 0$, and every $\omega\in \Sigma_k$.

Next claim restates the splitting property adding the condition that the two sequences in that condition have the same length.

 \begin{claim} \label{c.claimera52}
 There are $\mathbb{P}^{-}$-admissible cylinders $[\xi_{0}\dots \xi_{N-1}]$
 and $[\eta_{0}\dots\eta_{N-1}]$
  with $\xi_{0}=\eta_{0}$, $\xi_{N-1}=\eta_{N-1}$, such that
\[
 \begin{split}
&\pi_{s}(f_{\omega}^{n}( f_{\xi_{0}}\circ\cdots\circ f_{\xi_{N-1}}(M)))\cap 
\pi_{s}(f_{\omega}^{n}(f_{\eta_{0}}\circ\cdots\circ f_{\eta_{N-1}}(M)))=\emptyset
\end{split}
\]
for every $s$, $n\geq 0$, and every $\omega$.
\end{claim}

\begin{proof}
Consider $a_1,\dots,a_\ell$ and $b_1,\dots,b_r$ as in \eqref{e.splittingcondition}.
 Since the transition matrix of  $\mathbb{P}$ is primitive then  
 there is $n_{0}$ such that
 for every $n\geq n_{0}$ there are $\mathbb{P}$-admissible cylinders of the form
  $[1 c_{n}\dots c_1 a_1]$ and $[1 d_{n}\dots d_1 b_1]$.
  Take now $n_{1},n_{2}\geq n_{0}$ 
 with $n_{1}+\ell=n_{2}+s$ and 
 consider $\mathbb{P}^{-}$-admissible cylinders
  $[1 c_{n_1}\dots c_1 a_1]$ and $[1 d_{n_2}\dots d_1 b_1]$.
  Let $N=n_{1}+\ell+2$ and observe that by construction (and since
 $[a_{1}\dots a_{\ell}]$ and 
 $[b_{1}\dots b_{r}]$ are both admissible)
   the cylinders 
   $$
  [\xi_{0}\dots \xi_{N-1}]=[a_{\ell}\dots a_{1}c_{1}\dots c_{n_{1}}1] \quad\mbox{and} \quad
[\eta_{0}\dots \eta_{N-1}]=[b_{r}\dots b_{1}d_{1}\dots d_{n_{2}}1]
$$  
are $\mathbb{P}^{-}$-admissible.
Note also that
$$
f_{c_{1}}\circ \dots \circ f_{c_{n_{1}}}\circ f_{1}(M)\subset M,
\qquad f_{d_{1}}\circ \dots \circ f_{d_{n_{2}}}\circ f_{1}(M)\subset M.
$$
Hence the splitting condition in \eqref{e.splittingcondition} for $a_{1}\dots a_{\ell}$ and 
 $b_{1}\dots b_{r}$  implies that
$\xi_{0}=\eta_{0}$, $\xi_{N-1}=\eta_{N-1}$, and  
$\xi_{0}\dots \xi_{N-1}$
 and $\eta_{0}\dots\eta_{N-1}$
satisfy the empty intersection condition in the claim.
\end{proof}

Let $[\xi_{0}\dots \xi_{N-1}]$ and  $[\eta_{0}\dots\eta_{N-1}]$ the cylinders given by 
Claim~\ref{c.claimera52}. Suppose that 
$0<\mathbb{P}^{-} ([\xi_{0}\dots \xi_{N-1}]) \le \mathbb{P}^{-} ([\eta_{0}\dots\eta_{N-1}]) $
and let  $W=[\xi_{0}\dots \xi_{N-1}]$.
We can now apply 
Proposition \ref{p.l.menorouigual} to
$W$ obtaining
\begin{equation}\label{e.lemma}
 \mathbb{P}^{-}(S_{\ell N}^x(s))\leq \mathbb{P}^{-}(\Sigma_{\ell}^W),
\quad \mbox{ for every $\ell\geq 1$.}  
\end{equation}

 We now estimate the right hand side  of \eqref{e.lemma}.  Let 
$$
\Sigma_{\infty}^{W}\eqdef \bigcap_{\ell\geq 1} \Sigma_{\ell}^{W}=
\{\omega\in \Sigma_{k} \colon \sigma^{iN}(\omega)
 \cap W=\emptyset\,\, \mbox{for all}\,\, i\geq 0\}. 
 $$
 
 \begin{remark}
 \label{r.primitive}
 {\em{
Since $P$ is primitive 
the shift
$(\Sigma_{k},\mathscr{F},\mathbb{P}^{-},\sigma)$ 
is mixing
and hence
the system
$(\Sigma_{k},\mathscr{F},\mathbb{P}^{-},\sigma^\ell)$ is ergodic for every $\ell\ge 1$,
see for instance \cite[page 64]{Mane}. }}
\end{remark}

Since $0<\mathbb{P}^{-} (W)$, by Remark~\ref{r.primitive} we can apply 
the Birkhoff's ergodic theorem to get that $\mathbb{P}^{-}(\Sigma_{\infty}^{W})=0$.
 Hence condition $\Sigma_{\ell+1 }^{W}\subset \Sigma_{\ell}^{W}$, 
 recall  \eqref{e.inclusioncylinders}, implies that 
 $$
 \lim_{\ell\to \infty} \mathbb{P}^{-}( \Sigma_{\ell}^{W})=0.
 $$
It follows from 
\eqref{e.lemma} that
$$
\lim_{\ell\to \infty} \mathbb{P}^{-} (S_{\ell N}^x(s))\leq \lim_{\ell\to \infty} \mathbb{P}^{-} (\Sigma_{\ell}^{W})=0.
$$ 
The lemma follows recalling again that $S_{n+1}^{x}(s)\subset S_{n}^{x}(s)$, see \eqref{e.inclusionSn}.
\end{proof}
The proof of Proposition \ref{p.pzero} is now complete
\end{proof}
The proof of Theorem \ref{abundance} is now complete. 
  \hfill \qed

\section{Stability of the Markov operator}
\label{s.Markovop}

Throughout this section $\varphi$ denotes a  random product  on a compact and connected 
metric space $(M,d)$ over a Markov shift $(\Sigma_{k},\mathscr{F},\mathbb{P},\sigma)$ as in \eqref{e.randomproductmap}.
In this section we will prove Theorem~\ref{stable} and Corollary  \ref{ergodictheorem}.
We begin by providing a handy form of the Markov operator of $\varphi$. 

\subsection{The Markov operator}
\label{ss.Markovop}
 Let $\widehat M\eqdef \{1,\dots,k\}\times M$. Given a subset $\widehat{B}\subset\widehat{M}$, its 
{\emph{$i$-section}} is defined by
$$
\widehat B_{i}\eqdef\{x\in M\colon (i,x)\in \widehat{B}\}. 
$$
The {\emph{$i$-section of a probability 
measure $\widehat{\mu}$}} on $\widehat{M}$ is the measure defined on  $M$ by
$$
\mu_{i}(B)\eqdef \widehat{\mu}(\{i\}\times B ), 
\quad \mbox{
where $B$ is any Borel subset of $M$.}
$$
Observe that 
${\mu}_i$ is a finite measure on $M$ but, in general, it is 
not a probability measure. 
Since the measure $\widehat \mu$ is completely defined by its sections
we write $\widehat{\mu}=(\mu_{1},\dots,\mu_{k})$
and note that   
$$
\widehat{\mu}(\widehat B)=\sum_{j=1}^{k}\mu_{j}(\widehat B_{j})
\quad \mbox{for every Borel subset $\widehat B$
of $\widehat{M}$}.
$$
Similarly,
given a function  $\widehat g\colon \widehat{M}\to \mathbb{R}$
we define its $i$-section
$g_i\colon M \to \mathbb{R}$
 by 
$g_{i}(x)\eqdef \widehat g(i,x)$ and write $\widehat g= \langle g_{1},\dots,g_{k} \rangle$.
By definition, it follows that
\begin{equation}\label{desintegracao}
\int \widehat g\, d\widehat\mu=\sum_{i=1}^{k} \int g_{i}\, d\mu_{i},
\quad 
\mbox{for every} 
\quad
\widehat{\mu}=(\mu_{1},\dots,\mu_{k})\in \mathcal{M}_{1}(\widehat{M}).
\end{equation}

For the next lemma recall that 
 $\phi_* \mu$ denotes the pushforward of the measure $\mu$ by $\phi$
(i.e., $\phi_{*} \mu (A)=  \mu (\phi^{-1} (A))$).

\begin{lemma} Consider a random product $\varphi(n,\omega,x)=f_{\omega}^{n}(x)$ on $M$ 
over a Markov shift $(\Sigma_{k},\mathscr{F},\mathbb{P},\sigma)$. Let $P=(p_{ij})$ be the transition
matrix 
of  $\mathbb{P}$. The Markov operator associated to $\varphi$ is given by 
$$
  T\widehat{\mu}(\widehat{B})\eqdef \sum_{i,j}p_{ij}f_{j*}\mu_{i}(\widehat B_{j}), 
$$
where $\widehat{\mu}=(\mu_{1},\dots,\mu_{k})\in \mathcal{M}_{1}(\widehat{M})$ and 
$\widehat  B$ is
any Borel subset of $\widehat M$. In particular,
$$
(T\widehat{\mu})_{j}=\sum_{i,j}p_{ij}f_{j*}\mu_{i}.
$$
\end{lemma}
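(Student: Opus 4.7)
The plan is to prove the lemma by direct computation, unpacking the definitions in the right order. Starting from the definition in \eqref{handyform},
\[
T\widehat\mu(\widehat B) = \int \widehat P((i,z),\widehat B)\, d\widehat\mu(i,z),
\]
I would first rewrite the transition kernel $\widehat P$ using that $P$ on the finite set $E=\{1,\dots,k\}$ is given by the matrix $(p_{ij})$. From \eqref{transition} we have
\[
\widehat P((i,z),\widehat B) = P(i,\{j : (j,f_j(z))\in \widehat B\}) = \sum_{j=1}^{k} p_{ij}\, \mathbf{1}_{\{(j,f_j(z))\in \widehat B\}} = \sum_{j=1}^{k} p_{ij}\, \mathbf{1}_{\widehat B_j}(f_j(z)),
\]
using the definition of the $j$-section $\widehat B_j = \{x\in M : (j,x)\in\widehat B\}$.

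Next, I would use the identity \eqref{desintegracao} applied to the function $\widehat g(i,z)\eqdef \widehat P((i,z),\widehat B)$, whose $i$-section is the function $z\mapsto \sum_{j} p_{ij}\,\mathbf{1}_{\widehat B_j}(f_j(z))$, to break the integral against $\widehat\mu$ into a sum of integrals against the sections $\mu_i$:
\[
T\widehat\mu(\widehat B) = \sum_{i=1}^{k}\int \sum_{j=1}^{k} p_{ij}\,\mathbf{1}_{\widehat B_j}(f_j(z))\, d\mu_i(z) = \sum_{i,j} p_{ij}\int \mathbf{1}_{\widehat B_j}(f_j(z))\, d\mu_i(z).
\]
The inner integral is, by the definition of pushforward, $\mu_i(f_j^{-1}(\widehat B_j)) = f_{j*}\mu_i(\widehat B_j)$, yielding the desired formula.

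Finally, to extract the expression for the sections $(T\widehat\mu)_j$, I would apply the just-proved formula to sets of the form $\widehat B=\{j\}\times B$ for an arbitrary Borel $B\subset M$; then $\widehat B_j = B$ and $\widehat B_{j'}=\emptyset$ for $j'\ne j$, so only the terms with that fixed index $j$ survive, giving $(T\widehat\mu)_j(B) = \sum_{i} p_{ij}\, f_{j*}\mu_i(B)$. Since nothing beyond the definitions of $\widehat P$, of the sections, and of the pushforward is used, there is no real obstacle; the only point requiring minor care is the bookkeeping of indices and the correct identification of the integrand as an element of the class covered by \eqref{desintegracao} (a Fubini-type interchange for the sum over $j$ inside the integral, which is legitimate since the sum is finite).
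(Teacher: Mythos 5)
Your proof is correct and follows essentially the same route as the paper: unpack $\widehat P$ as a finite sum of indicators, apply the disintegration identity \eqref{desintegracao}, interchange the finite sum with the integral, and recognize the pushforward. The only minor addition is your explicit extraction of $(T\widehat\mu)_j$ by testing against $\widehat B=\{j\}\times B$, which the paper leaves implicit.
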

\begin{proof}
Let $\widehat B$ be a Borel subset of $\widehat M$. The transition probability
on the set $\widehat M=\{1,\dots,k\}\times M $
associated to $\varphi$ is given by (recall \eqref{transition})
$$
\widehat P((i,z),\widehat{B})= \sum_{j=1}^{k}p_{ij}\mathds{1}_{\widehat{B}}(j,f_{j}(z)),
$$
and hence the corresponding Markov operator is given by (recall \eqref{handyform})
\[
\begin{split}
  T\widehat\mu(\widehat{B})&=\int \sum_{j=1}^{k}p_{ij}\mathds{1}_{\widehat{B}}(j,f_{j}(z))  \,d\widehat\mu(i,z)
  =\sum_{j=1}^{k}\int p_{ij}\mathds{1}_{\widehat{B}}(j,f_{j}(z))  \,d\widehat\mu(i,z)\\
& \underset{\textrm{by \eqref{desintegracao}}}=\sum_{j=1}^{k}\int p_{ij}\mathds{1}_{\widehat{B_{j}}}(f_{j}(z))  \,d\mu_{i}(z)\\
 &=\sum_{j=1}^{k}p_{ij}\int\mathds{1}_{\widehat{B_{j}}}(z)  \,df_{j*}\mu_{i}(z)
=\sum_{i,j}p_{ij}f_{j*}\mu_{i}(\widehat{B_{j}}),
\end{split}
\] 
proving the lemma. 
\end{proof}

\subsection{Shrinking of the reverse order iterates} 
\label{ss.shrink}
Recall the definition of the set 
 $S_{\varphi}$  
 in \eqref{weakly}
 and define  the
 {\emph{coding map}}\footnote{This is the standard terminology for the map $\pi$ 
 when
$S_{\varphi}=\Sigma_k$.}
\begin{equation}\label{coding}
\pi\colon S_\varphi \to M, 
\quad
\pi(\omega)\eqdef\lim_{n\to \infty}f_{\omega_{0}}\circ\cdots\circ f_{\omega_{n}}(p),
\end{equation}
where $p$ is any point of $M$.
By definition of the set $S_{\varphi}$, this limit always exists 
and does not depend on $p\in M$.

\begin{lemma}\label{interessante}
For every sequence $(\mu_{n})$
of probabilities of $\mathcal{M}_{1}(M)$  and every
 $\omega\in S_{\varphi}$ it holds
$$
\lim_{n\to \infty}f_{\omega_{0}*}\ldots_\ast f_{\omega_{n}*}\mu_{n}=\delta_{\pi(\omega)}.
$$
\end{lemma}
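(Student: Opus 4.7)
The plan is to exploit the fact that for $\omega\in S_\varphi$ the nested compact sets $K_n(\omega)\eqdef f_{\omega_0}\circ\cdots\circ f_{\omega_n}(M)$ shrink to the single point $\pi(\omega)$, and then observe that every measure of the form $f_{\omega_0 *}\cdots f_{\omega_n *}\mu_n$ is supported inside $K_n(\omega)$.

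First, I would record the geometric input. By definition of $S_\varphi$ and of the coding map $\pi$ in \eqref{coding}, one has
$$
\bigcap_{n\geq 0} K_n(\omega) = I_\omega = \{\pi(\omega)\},
$$
and the sequence $(K_n(\omega))_n$ is a decreasing sequence of non-empty compact subsets of $M$. A standard compactness argument then yields the uniform shrinking statement: for every $\varepsilon>0$ there exists $N=N(\omega,\varepsilon)$ such that $K_n(\omega)\subset B(\pi(\omega),\varepsilon)$ for all $n\geq N$. (Otherwise one could pick $x_n\in K_n(\omega)\setminus B(\pi(\omega),\varepsilon)$ and extract an accumulation point in $I_\omega\setminus\{\pi(\omega)\}$, a contradiction.)

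Next, I would translate this into weak-$\ast$ convergence. Fix a continuous test function $g\colon M\to\mathbb{R}$. By the change-of-variables formula for the pushforward,
$$
\int g\, d\big(f_{\omega_0 *}\cdots f_{\omega_n *}\mu_n\big)
= \int_M g\circ f_{\omega_0}\circ\cdots\circ f_{\omega_n}\, d\mu_n.
$$
The image of the integrand lies in $g(K_n(\omega))$. Since $g$ is continuous on the compact set $M$, for any $\varepsilon'>0$ uniform continuity of $g$ supplies an $\varepsilon>0$ so that $|g(y)-g(\pi(\omega))|<\varepsilon'$ whenever $y\in B(\pi(\omega),\varepsilon)$. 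For $n\geq N(\omega,\varepsilon)$ the previous step forces $f_{\omega_0}\circ\cdots\circ f_{\omega_n}(M)\subset B(\pi(\omega),\varepsilon)$, hence
$$
\Big|\int g\, d\big(f_{\omega_0 *}\cdots f_{\omega_n *}\mu_n\big) - g(\pi(\omega))\Big|
\leq \int_M \big|g\circ f_{\omega_0}\circ\cdots\circ f_{\omega_n}(x) - g(\pi(\omega))\big|\, d\mu_n(x) < \varepsilon',
$$
where we used that $\mu_n$ is a probability measure. Letting $\varepsilon'\to 0$ gives $\int g\, d(f_{\omega_0 *}\cdots f_{\omega_n *}\mu_n)\to g(\pi(\omega))=\int g\, d\delta_{\pi(\omega)}$.

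Since this holds for every continuous $g$, the claimed weak-$\ast$ convergence follows. I do not expect a genuine obstacle here: the only non-trivial ingredient is the uniform shrinking of the nested compacts, which is immediate from the definition of $S_\varphi$ together with compactness of $M$; the crucial point is that the sequence $(\mu_n)$ is allowed to vary, and what makes the argument insensitive to this variation is precisely that the pushforward measures are all supported on the common tiny set $K_n(\omega)$, so the integrand is uniformly close to the constant $g(\pi(\omega))$ on all of $M$.
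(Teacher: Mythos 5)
Your proof is correct and follows essentially the same route as the paper's: fix a continuous test function $g$, use continuity of $g$ near $\pi(\omega)$ together with the uniform shrinking of the nested compacts $f_{\omega_0}\circ\cdots\circ f_{\omega_n}(M)$ to $\{\pi(\omega)\}$ (which is exactly what $\omega\in S_\varphi$ gives after a compactness argument), and then bound the integral difference by $\varepsilon$ using that each $\mu_n$ is a probability measure. The only cosmetic difference is that you make the nested-compacts argument explicit where the paper simply asserts the resulting uniform estimate.
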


\begin{proof}
 Consider a sequence of probabilities $(\mu_n)$ and  $\omega\in S_{\varphi}$. Fix any 
$g\in C^{0}(M)$. Then  given any $\epsilon>0$ there is $\delta>0$ 
such that 
$$
|g(y)-g\circ\pi(\omega)|<\epsilon
\quad \mbox{for all $y\in M$ with $d(y,\pi(\omega))<\delta$.}
$$
Since $\omega \in S_{\varphi}$ there is $n_{0}$
such that  $d(f_{\omega_{0}}\circ\cdots \circ f_{\omega_{n}}(x),\pi(\omega))< \delta$ for every
$x\in M$ and every $n\geq n_{0}$. Therefore 
for $n\geq n_{0}$ we have
\[
\begin{split}
\left|
g\circ \pi(\omega)-
\int g\, df_{\omega_{0}*}\ldots_\ast f_{\omega_{n}*}\mu_{n}\right|&=
\left|
\int 
g\circ \pi(\omega)\,d\mu_{n}-
\int g\circ f_{\omega_{0}}\circ\cdots\circ f_{\omega_{n}}(x)\,d \mu_{n}
\right|\\
&\leq \int 
|g\circ \pi(\omega)- g\circ f_{\omega_{0}}\circ\cdots\circ f_{\omega_{n}}(x)|\, d\mu_{n}
\le \epsilon.
\end{split}
\]
This implies that
$$
\lim_{n\to \infty}\int g\, df_{\omega_{0}*}\ldots_\ast f_{\omega_{n}*}\mu_{n}=g\circ\pi(\omega)
$$
Since this holds for every continuous map $g$ the lemma follows.
\end{proof}

\subsection{Proof of Theorem \ref{stable}}
Let $\pi\colon S_{\varphi}\to M$ be the coding map in \eqref{coding} and
 define 
 $$
 \widehat\pi\colon   S_{\varphi}\to \widehat M, \quad 
\widehat \pi(\omega)\eqdef (\omega_{0},\pi(\omega)).
 $$
Since the random product $\varphi$ splits it follows from Theorem \ref{abundance}
 that $\mathbb{P}^{-}( S_{\varphi})=1$.
 Hence the map $\widehat \pi$ is defined $\mathbb{P}^{-}$-almost 
 everywhere in $\Sigma_{k}$. This allows us to consider the probability measure $\widehat \pi_{*}\mathbb{P}^{-}$.
 
 The next result is a reformulation of Theorem~\ref{stable}  (indeed, a stronger version of it)
 and implies that
 the probability measure $\widehat \pi_{*}\mathbb{P}^{-}$ is  the unique stationary measure
of the Markov operator and is attracting.

\begin{theorem}\label{t.p.attractingmarkov}
Let $\varphi$ be a random product on a compact metric space $M$ over a primitive Markov shift and suppose that 
$\mathbb{P}^{-}(S_{\varphi})=1$.
Given any  
measure
$\widehat{\mu}\in \mathcal{M}_{1}(\widehat{M})$
 and any continuous 
  $\widehat{g}\in C^{0}(\widehat{M} )$
it holds 
\[
\lim_{n\to \infty}\int \widehat{g}\, dT^{n}{\widehat{\mu}}=\int \widehat{g}
  \, d\widehat \pi_{*}\mathbb{P}^{-},
 \]
 where $T$ is the Markov operator of $\varphi$.
\end{theorem}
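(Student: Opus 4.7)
The plan is to reduce the convergence statement to an asymptotic computation that combines Theorem~\ref{abundance}, Lemma~\ref{interessante}, and the mixing of the shift under $\mathbb{P}^{-}$. First, I would iterate the handy form $(T\widehat{\mu})_j = \sum_i p_{ij} f_{j*}\mu_i$ derived in Section~\ref{ss.Markovop} to obtain the dual identity
\[
\int \widehat{g}\, dT^n\widehat{\mu} \;=\; \sum_{j_0,\dots,j_n} p_{j_0 j_1}\cdots p_{j_{n-1}j_n}\int g_{j_n}(f_{j_n}\circ\cdots\circ f_{j_1}(z))\, d\mu_{j_0}(z),
\]
valid for every test function $\widehat{g} = \langle g_1,\ldots,g_k\rangle$.

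Next, I would reverse the path by setting $\omega_\ell = j_{n-\ell}$, so that $f_{j_n}\circ\cdots\circ f_{j_1} = f_{\omega_0}\circ\cdots\circ f_{\omega_{n-1}}$. A direct computation from $q_{ij}=p_jp_{ji}/p_i$ gives
\[
\mathbb{P}^{-}([\omega_0\cdots\omega_n]) \;=\; p_{\omega_n}\cdot p_{\omega_n\omega_{n-1}}\cdots p_{\omega_1\omega_0},
\]
so the transition weight equals $\mathbb{P}^{-}([\omega_0\cdots\omega_n])/p_{\omega_n}$. Substituting and writing $\mu_j = \mu_j(M)\,\widetilde{\mu}_j$ for a probability normalization $\widetilde{\mu}_j$ (chosen arbitrarily when $\mu_j=0$), I obtain
\[
\int \widehat{g}\, dT^n\widehat{\mu} \;=\; \int k(\omega_n)\, G_n(\omega)\, d\mathbb{P}^{-}(\omega),
\]
where $k(j)\eqdef \mu_j(M)/p_j$ and $G_n(\omega)\eqdef\int g_{\omega_0}(f_{\omega_0}\circ\cdots\circ f_{\omega_{n-1}}(z))\, d\widetilde{\mu}_{\omega_n}(z)$.

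By Theorem~\ref{abundance}, $\mathbb{P}^{-}(S_\varphi)=1$, so Lemma~\ref{interessante} applied to the varying sequence of probabilities $(\widetilde{\mu}_{\omega_n})_n$ gives $G_n(\omega)\to g_{\omega_0}(\pi(\omega))$ for $\mathbb{P}^{-}$-a.e.\ $\omega$, with the sequence uniformly bounded by $\|\widehat{g}\|_\infty$. Primitivity of $P$ forces $\bar p$ to be strictly positive, so $k$ is bounded by $1/\min_j p_j$. Dominated convergence then lets me replace $G_n$ by $g_{\omega_0}\circ\pi$ up to a vanishing error, reducing the goal to
\[
\int k(\omega_n)\, g_{\omega_0}(\pi(\omega))\, d\mathbb{P}^{-}(\omega) \;\xrightarrow[n\to\infty]{}\; \int \widehat{g}\, d\widehat{\pi}_*\mathbb{P}^{-}.
\]

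The main obstacle is this final step: the factor $k(\omega_n)$ depends on the tail coordinate and does not converge pointwise, so a decoupling argument is required. Writing $k(\omega_n) = \widetilde{k}\circ\sigma^n(\omega)$ with $\widetilde{k}(\omega)\eqdef k(\omega_0)$ and observing $\int \widetilde{k}\, d\mathbb{P}^{-} = \sum_j p_j\cdot\mu_j(M)/p_j = 1$, I would invoke the mixing of $(\Sigma_k,\mathscr{F},\mathbb{P}^{-},\sigma)$ from Remark~\ref{r.primitive} applied to the bounded measurable functions $\widetilde{k}$ and $\omega\mapsto g_{\omega_0}(\pi(\omega))$ to obtain
\[
\int \widetilde{k}(\sigma^n\omega)\, g_{\omega_0}(\pi(\omega))\, d\mathbb{P}^{-} \;\longrightarrow\; \int \widetilde{k}\, d\mathbb{P}^{-}\cdot \int g_{\omega_0}(\pi(\omega))\, d\mathbb{P}^{-} \;=\; \int \widehat{g}\, d\widehat{\pi}_*\mathbb{P}^{-}.
\]
This concludes the proof, and the continuity of $T$ in the weak-$*$ topology then automatically gives $T(\widehat{\pi}_*\mathbb{P}^{-}) = \widehat{\pi}_*\mathbb{P}^{-}$, confirming that the limit is the unique (attracting) stationary measure.
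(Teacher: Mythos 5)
Your proof is correct, and it takes a genuinely different route from the paper's. The paper works section by section: it decomposes $\mu_{\xi_n}(M) = (\mu_{\xi_n}(M)-p_{\xi_n}) + p_{\xi_n}$, shows via Lemma~\ref{interessante} and dominated convergence that the $p_{\xi_n}$-term converges to $\int_{[j]} g\circ\pi\,d\mathbb{P}^-$, bounds the remainder by $k\|g\|\max_i|\mu_i(M)-p_i|$, and then invokes the Perron--Frobenius theorem (Lemma~\ref{l.perronfrob}) together with a two-stage $\limsup$ argument (Lemma~\ref{l.quase}) to drive the error to zero. You instead reverse the sum entirely into a single $\mathbb{P}^-$-integral $\int k(\omega_n)G_n(\omega)\,d\mathbb{P}^-$ with the Radon--Nikodym-type correction $k(j)=\mu_j(M)/p_j$, replace $G_n$ by $g_{\omega_0}\circ\pi$ via Lemma~\ref{interessante} and dominated convergence (primitivity gives $\min_j p_j>0$, so $k$ is bounded), and then decouple the tail factor $k(\omega_n)=\widetilde k\circ\sigma^n$ using mixing of $(\Sigma_k,\mathbb{P}^-,\sigma)$ (Remark~\ref{r.primitive}), noting $\int\widetilde k\,d\mathbb{P}^-=1$. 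The two proofs consume the same hypothesis (primitivity) in equivalent forms --- Perron--Frobenius convergence of $\widehat p\,P^n$ versus mixing of $\mathbb{P}^-$ --- but your packaging eliminates the $\limsup$-iteration of Lemma~\ref{l.quase} and makes the role of the normalization $k$ transparent; the paper's version stays closer to the elementary linear-algebra of $P^n$ and is more modular (Lemma~\ref{l.primitiveee} as a quantitative estimate). One minor remark: your $G_n$ depends on $\omega_n$ through $\widetilde\mu_{\omega_n}$, but because $k$ is bounded and $G_n-g_{\omega_0}\circ\pi\to 0$ a.e.\ and is uniformly bounded, dominated convergence still applies to the product, so the step is sound.
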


%
%

\begin{remark}\emph{
In \cite{Recurrent} it is proved that the condition $\mathbb{P}^{-}(S_{\varphi})=1$
implies the existence 
of a unique stationary measure and that $T^{n}\widehat{\mu}$ converge in the weak$*$-topology to the unique stationary measure provided that $\widehat{\mu}=(\mu_{1},\dots,\mu_{k})$ satisfy $\mu_{i}(M)=p_{i}$ for every
$i$, where $(p_{1},\dots,p_{k})$ is the stationary probability vector.
 The  result in \cite{Recurrent}  is a version of Letac principle \cite{Letac}
for a Markovian random product. Here we prove that $T^{n}\widehat{\mu}$ converges in the weak$*$-topology to the unique stationary measure  for every $\widehat{\mu}$ (and not only for measures 
$\widehat{\mu}=(\mu_{1},\dots,\mu_{k})$ with $\mu_{i}(M)=p_{i}$).}
\end{remark}

\begin{proof}[Proof of Theorem \ref{t.p.attractingmarkov}]
It follows from \eqref{desintegracao} that
 \begin{equation}\label{e.3101}
\int \widehat g\, dT^{n}{\widehat{\mu}}=\sum_{j=1}^{k}\int g_{j}
\,d  (T^{n}{\widehat{\mu}})_{j},
\quad 
T^{n}{\widehat{\mu}}=\big( (T^{n}{\widehat{\mu}})_{1}, \dots, (T^{n}{\widehat{\mu}})_{k}\big).
 \end{equation}


We prove 
the convergence of the integrals of the sum in  \eqref{e.3101}  
in three steps corresponding to Lemmas~\ref{l.primitiveee}, \ref{l.perronfrob}, and
\ref{l.quase} below.
 First,
 given a continuous function $g\colon M \to \mathbb{R}$
denote by $\Vert g\Vert $ its uniform
 norm.

\begin{lemma}\label{l.primitiveee}
Consider $\widehat{\mu}=(\mu_{1},\dots,\mu_{k})\in \mathcal{M}_{1}(\widehat{M})$ such that 
$\mu_{i}(M)>0$ for every $i\in \{1,\dots, k\}$. Then for every $g\in C^{0}(M)$ it holds
$$
\limsup_{n}\left|\int g \, d (T^{n}{\widehat{\mu}})_{j}
-\int_{[j]} g\circ\pi\, d\,\mathbb{P}^{-} \right|\leq k\, \Vert g\Vert\, \max_{i}|\mu_{i}(M)-p_i|,
$$
where $\bar p=(p_1,\dots,p_k)$ is the unique stationary vector of $P=(p_{ij})$ is the transition matrix of
$\mathbb{P}$. 
\end{lemma}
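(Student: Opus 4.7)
My plan is to prove the lemma by expressing $(T^n\widehat{\mu})_j$ as an integral against the inverse Markov measure $\mathbb{P}^-$, and then comparing it with $\int_{[j]} g\circ \pi\, d\mathbb{P}^-$ via Lemma~\ref{interessante} and the Perron--Frobenius theorem for the primitive matrix $P$. Iterating the formula $(T\widehat\mu)_j=\sum_i p_{ij} f_{j*}\mu_i$ and reindexing the sum by setting $\omega_0=j$, one obtains
$$
(T^n\widehat\mu)_j=\sum_{\omega_1,\dots,\omega_n}\Big(\prod_{r=1}^n p_{\omega_r\omega_{r-1}}\Big)\,(f_{\omega_0}\circ\cdots\circ f_{\omega_{n-1}})_*\mu_{\omega_n}.
$$
A direct computation from $q_{ij}=(p_j/p_i)p_{ji}$ and the definition of $\mathbb{P}^-$ gives the key algebraic identity
$$
\prod_{r=1}^n p_{\omega_r\omega_{r-1}}=\frac{\mathbb{P}^-([\omega_0\,\omega_1\cdots\omega_n])}{p_{\omega_n}},
$$
and therefore, interpreting the sum as an integral over the cylinder $[j]$,
$$
\int g\, d(T^n\widehat\mu)_j=\int_{[j]}\frac{1}{p_{\omega_n}}\int g\circ f_{\omega_0}\circ\cdots\circ f_{\omega_{n-1}}\,d\mu_{\omega_n}\,d\mathbb{P}^-(\omega).
$$

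Next I would normalize: set $\nu_i\eqdef\mu_i/\mu_i(M)$ (well defined since $\mu_i(M)>0$) and write $\alpha_n(\omega)\eqdef\int g\circ f_{\omega_0}\circ\cdots\circ f_{\omega_{n-1}}\,d\nu_{\omega_n}$, so that the inner integrand above equals $(\mu_{\omega_n}(M)/p_{\omega_n})\,\alpha_n(\omega)$. Subtracting $\int_{[j]}g\circ\pi\,d\mathbb{P}^-$ and rearranging gives
$$
\int g\, d(T^n\widehat\mu)_j-\int_{[j]}g\circ\pi\,d\mathbb{P}^-=\int_{[j]}\bigl(\alpha_n(\omega)-g(\pi(\omega))\bigr)\,d\mathbb{P}^- + \int_{[j]}\Big(\tfrac{\mu_{\omega_n}(M)}{p_{\omega_n}}-1\Big)\alpha_n(\omega)\,d\mathbb{P}^-.
$$
Lemma~\ref{interessante} applied (for each fixed $\omega\in S_\varphi$) to the probabilities $\nu_{\omega_n}$ yields $\alpha_n(\omega)\to g(\pi(\omega))$; combining with $|\alpha_n|\le\|g\|$ and the standing hypothesis $\mathbb{P}^-(S_\varphi)=1$ of Theorem~\ref{t.p.attractingmarkov}, dominated convergence makes the first summand vanish.

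For the second summand, bounding $|\alpha_n|\le\|g\|$ and partitioning according to the value $\omega_n=i$ gives the upper bound
$$
\|g\|\sum_{i=1}^k\frac{|\mu_i(M)-p_i|}{p_i}\,\mathbb{P}^-(\omega_0=j,\,\omega_n=i).
$$
Writing $Q=D^{-1}P^T D$ with $D=\mathrm{diag}(p_1,\dots,p_k)$, a short computation shows $\mathbb{P}^-(\omega_0=j,\,\omega_n=i)=p_i\,[P^n]_{ij}$, so the bound simplifies to $\|g\|\sum_i|\mu_i(M)-p_i|\,[P^n]_{ij}$. Since $P$ is primitive, Perron--Frobenius gives $[P^n]_{ij}\to p_j\le 1$, whence
$$
\limsup_{n\to\infty}\|g\|\sum_{i=1}^k|\mu_i(M)-p_i|\,[P^n]_{ij}\le \|g\|\sum_{i=1}^k|\mu_i(M)-p_i|\le k\|g\|\max_i|\mu_i(M)-p_i|,
$$
and combining with the vanishing first summand proves the lemma. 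The step I expect to be the main obstacle is the bookkeeping that rewrites the iterated Markov operator as an integral against $\mathbb{P}^-$, with the density $1/p_{\omega_n}$ accounting for the discrepancy between $\mathbb{P}$ and its time reversal; once this identification is in place, Lemma~\ref{interessante} and the primitivity of $P$ handle the rest.
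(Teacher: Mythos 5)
Your proposal is correct and follows essentially the same route as the paper: it uses the same decomposition of the error into a vanishing part (handled by Lemma~\ref{interessante} and dominated convergence against $\mathbb{P}^-$) and a part controlled by $\max_i|\mu_i(M)-p_i|$; the only cosmetic difference is that you phrase everything as integrals against $\mathbb{P}^-$ from the outset and bound the second term via $[P^n]_{ij}\le 1$ (your appeal to Perron--Frobenius here is more than is needed but harmless), whereas the paper expands the same sums coordinatewise.
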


%

\begin{proof}
Take $\widehat{\mu}\in\mathcal{M}_{1}(\widehat{M})$ 
as in the lemma
and for each $i$ define
the
probability measure $\overline{\mu}_{i}$ 
$$
\overline{\mu}_{i}(B)\eqdef \frac{\mu_{i}(B)}{\mu_{i}(M)},
\quad \mbox{where $B$ is a Borel subset of $M$}.
$$
A straightforward calculation and the previous definition imply that  
\[
\begin{split}
 (T^{n}{\widehat{\mu}})_{j}&=\sum_{\xi_1,\dots ,\xi_{n}}
 p_{\xi_{n}\xi_{n-1}}\dots p_{\xi_{2}\xi_{1}}p_{\xi_{1}j} \, f_{j*}f_{\xi_{1}*}
 \dots_{*}f_{\xi_{n-1}*}\mu_{\xi_{n}}\\    
&=
 \sum_{\xi_1,\dots ,\xi_{n}}
 \mu_{\xi_{n}}(M)\, p_{\xi_{n}\xi_{n-1}}\dots p_{\xi_{2}\xi_{1}}p_{\xi_{1}j}
  f_{j*}f_{\xi_{1}*}\dots_{*}f_{\xi_{n-1}*}\overline{\mu}_{\xi_{n}}. 
 \end{split}
 \]
  Thus given any $g\in C^{0}(M)$ we have that
 \[ \int g\, d(T^{n}{\widehat{\mu}})_{j}= \sum_{\xi_1,\dots ,\xi_{n}}
 \mu_{\xi_{n}}(M)\, p_{\xi_{n}\xi_{n-1}}\dots 
 p_{\xi_{1}j}
  \int g \,d f_{j*}f_{\xi_{1}*}\dots_{*}f_{\xi_{n-1}*}\overline{\mu}_{\xi_{n}}.
\]
%
  
  Let 
  $$
  L_{n}\eqdef\left|\int g \, d (T^{n}{\widehat{\mu}})_{j}
-\int_{[j]} g\circ\pi\, d\,\mathbb{P}^{-} \right|
$$
 and write  $ \mu_{\xi_{n}}(M)=
 (\mu_{\xi_{n}}(M)-p_{\xi_{n}})+ p_{\xi_{n}}$. Then

\[
\begin{split}
L_{n}&\leq \left| \sum_{\xi_1,\dots ,\xi_{n}}
 (\mu_{\xi_{n}}(M)-p_{\xi_{n}})\, p_{\xi_{n}\xi_{n-1}}\dots 
 p_{\xi_{1}j}
  \int g \,d f_{j*}f_{\xi_{1}*}\dots_{*}f_{\xi_{n-1}*}\overline{\mu}_{\xi_{n}}\right|\\
  & + \left|\sum_{\xi_1,\dots ,\xi_{n}}
 p_{\xi_{n}}\, p_{\xi_{n}\xi_{n-1}}\dots 
 p_{\xi_{1}j}
  \int g \,d f_{j*}f_{\xi_{1}*}\dots_{*}f_{\xi_{n-1}*}\overline{\mu}_{\xi_{n}}-
  \int_{[j]} g\circ\pi\, d\,\mathbb{P}^{-}\right|\\
  &\leq \max_{i}|\mu_{i}(M)-p_i|
\Vert g\Vert\sum_{\xi_1,\dots ,\xi_{n}}
 p_{\xi_{n}\xi_{n-1}}\dots 
 p_{\xi_{1}j}\\
 & +
  \left|\sum_{\xi_1,\dots ,\xi_{n}}
 p_{\xi_{n}}\, p_{\xi_{n}\xi_{n-1}}\dots 
 p_{\xi_{1}j}
  \int g \,d f_{j*}f_{\xi_{1}*}\dots_{*}f_{\xi_{n-1}*}\overline{\mu}_{\xi_{n}}-
  \int_{[j]} g\circ\pi\, d\,\mathbb{P}^{-}\right|.\\
\end{split}
\]
To estimate the first term of this inequality note that $\sum_{\xi_1,\dots ,\xi_{n-1}}
 p_{\xi_{n}\xi_{n-1}}\dots
 p_{\xi_{1}j}$ is the entry $(\xi_{n},j)$ of the matrix $P^{n}$. Hence
 $$
 \sum_{\xi_1,\dots ,\xi_{n}}
 p_{\xi_{n}\xi_{n-1}}\dots 
 p_{\xi_{1}j}=\sum_{\xi_{n}=1}^k \, \sum_{\xi_1,\dots ,\xi_{n-1}}
 p_{\xi_{n}\xi_{n-1}}\dots
 p_{\xi_{1}j}\leq k.
  $$
  Therefore
  \begin{equation} \label{e.sumsum}
   \max_{i}|\mu_{i}(M)-p_i|
\Vert g\Vert\sum_{\xi_1,\dots ,\xi_{n}}
 p_{\xi_{n}\xi_{n-1}}\dots 
 p_{\xi_{1}j} \le 
 k\, \Vert g\Vert\, \max_{i}|\mu_{i}(M)-p_i|.
  \end{equation}
 
 We now estimate the second term in the sum above. 
 \begin{claim}\label{c.segunda25} For every continuous function $g$ it holds
 $$
 \lim_{n\to \infty}\sum_{\xi_1,\dots ,\xi_{n}}
 p_{\xi_{n}}p_{\xi_{n}\xi_{n-1}}\dots 
 p_{\xi_{1}j}
  \int g \,d f_{j*}f_{\xi_{1}*}\dots_{*}f_{\xi_{n-1}*}\overline{\mu}_{\xi_{n}}
  =\int_{[j]} g\circ\pi\, d\,\mathbb{P}^{-}.$$
 \end{claim}
 
Observe that equation \eqref{e.sumsum} and the claim imply the
 lemma.

 \begin{proof}[Proof of Claim \ref{c.segunda25}]
 Consider the sequence of functions 
given by
$$
G_{n}:\Sigma_{k}^+\rightarrow \mathbb{R}, \quad 
G_{n}(\xi) \eqdef\displaystyle\int g\,d f_{\xi_{0}*}f_{\xi_{1}*}\dots_{*}f_{\xi_{n-1}*}\overline{\mu}_{\xi_{n}}.
$$
By definition,
for every $n$ the 
 map
$G_{n}$ is constant in the cylinders $[\xi_{0},\ldots,\xi_{n}]$ and thus
it is  measurable. By definition of $\mathbb{P}^{-}$, for every $j$ we have that 
$$
p_{\xi_{n}}p_{\xi_{n}\xi_{n-1}}\dots p_{\xi_{2}\xi_{1}}p_{\xi_{1}j}
=\mathbb{P}([\xi_{n}\xi_{n-1}\dots\xi_{1}j])=\mathbb{P}^{-}([j\xi_{1}\xi_{2}\dots\xi_{n}]).
$$
Hence 
$$
 \sum_{\xi_1,\dots ,\xi_{n}}
 p_{\xi_{n}}p_{\xi_{n}\xi_{n-1}}\dots 
 p_{\xi_{1}j}
  \int g \,d f_{j*}f_{\xi_{1}*}\dots_{*}f_{\xi_{n-1}*}\overline{\mu}_{\xi_{n}}=\int_{[j]} G_{n}\,d\,\mathbb{P}^{-}.
$$
It follows from  Lemma \ref{interessante} that
\begin{equation}\label{e.limitFn2}
\lim_{n\rightarrow\infty} G_{n}(\xi)=g\circ \pi(\xi)
\quad 
\mbox{for  $\mathbb{P}^{-}$-almost every  $\xi$}.
\end{equation}

  Now note that $|G_{n} (\xi)|\leq \|g\|$ for every $\xi\in \Sigma_{k}^{+}$. 
From \eqref{e.limitFn2}, using the dominated convergence theorem, 
we get
$$
\lim_{n\rightarrow\infty}\int_{[j]} G_{n}\, d\,\mathbb{P}^{-}
=\int_{[j]} g\circ\pi\, d\,\mathbb{P}^{-},
$$
proving the claim.
 \end{proof}
%
%
The proof of the lemma is now complete.
 \end{proof}
 
 \begin{remark}{\em{
 Recall that, by hypothesis,  the transition matrix $P$ is primitive. Hence 
by the Perron-Frobenius theorem (see for instance \cite[page 64]{Mane})
there is a unique positive stationary probability vector  $\bar p=(p_1,\dots, p_k)$  of $P$
such that for every probability vector $\widehat{p}$ we have
\begin{equation}\label{e.perronfrob}
\lim_{n\to \infty}\widehat{p}\,P^{n}=\bar{p}
\quad
\mbox{for every probability vector $\widehat{p}$}.
\end{equation}
The vector $\bar p$ is the stationary vector of $P$.}}
\end{remark}
 
\begin{lemma}\label{l.perronfrob}
Let $\widehat{\mu}=(\mu_1,\dots, \mu_k)\in \mathcal{M}_{1}(\widehat{M})$. 
Then
 $$
 \lim_{n\to \infty}\max_{i}
|(T^{n}\widehat{\mu})_{i}(M)-p_i| = 0.
$$  
In particular, there is  
$n_{0}$ such that 
the vector $\big((T^{n}\widehat{\mu})_{1}(M),\dots,(T^{n}\widehat{\mu})_{k}(M)\big)$
 is positive for every $n\geq n_{0}$.
\end{lemma}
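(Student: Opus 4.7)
The plan is to reduce the statement to a direct application of the Perron–Frobenius convergence recorded in the remark preceding the lemma. The key observation is that, since every $f_j$ maps $M$ into $M$, we have $f_{j*}\mu_i(M)=\mu_i(f_j^{-1}(M))=\mu_i(M)$ for all $i,j$. Combined with the formula $(T\widehat{\mu})_j=\sum_{i}p_{ij}\,f_{j*}\mu_i$ established in the previous lemma, this gives
\[
(T\widehat{\mu})_j(M)=\sum_{i=1}^{k}p_{ij}\,\mu_i(M).
\]

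Next, I would define the row vector $v^{(n)}\eqdef\bigl((T^{n}\widehat{\mu})_{1}(M),\dots,(T^{n}\widehat{\mu})_{k}(M)\bigr)$. The identity above says exactly that $v^{(n+1)}=v^{(n)}P$, and hence by induction $v^{(n)}=v^{(0)}P^{n}$. Since $\widehat{\mu}$ is a probability measure on $\widehat{M}=\{1,\ldots,k\}\times M$, we have $\mu_i(M)\geq 0$ for each $i$ and $\sum_{i=1}^{k}\mu_i(M)=\widehat{\mu}(\widehat{M})=1$, so $v^{(0)}$ is an honest probability vector.

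Now I would invoke the Perron–Frobenius convergence \eqref{e.perronfrob} (stated in the remark just before the lemma), which applies because $P$ is primitive: for any probability vector $\widehat{p}$ one has $\lim_{n\to\infty}\widehat{p}\,P^{n}=\bar p$. Applying this to $\widehat{p}=v^{(0)}$ yields $v^{(n)}\to \bar p$ in $\mathbb{R}^{k}$, which in the maximum norm gives exactly
\[
\lim_{n\to\infty}\max_{i}\bigl|(T^{n}\widehat{\mu})_{i}(M)-p_{i}\bigr|=0.
\]
For the ``in particular'' statement, since $\bar p=(p_1,\dots,p_k)$ is the \emph{positive} stationary probability vector, set $\epsilon=\tfrac{1}{2}\min_i p_i>0$; the above convergence then yields some $n_0$ with $(T^{n}\widehat{\mu})_i(M)\geq p_i-\epsilon>0$ for all $i$ and all $n\geq n_0$. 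There is no genuine obstacle here beyond checking carefully that the $M$-total masses of the sections evolve exactly under the transition matrix $P$, which is immediate from the forward invariance $f_j(M)\subset M$.
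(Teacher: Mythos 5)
Your proof is correct and takes essentially the same approach as the paper: you identify that the sectionwise total masses evolve by right multiplication by $P$, so $\bigl((T^{n}\widehat{\mu})_i(M)\bigr)_i = v^{(0)}P^n$, and then invoke the Perron--Frobenius convergence \eqref{e.perronfrob}. You merely spell out the verification that $(T\widehat{\mu})_j(M)=\sum_i p_{ij}\mu_i(M)$ and that $v^{(0)}$ is a probability vector, which the paper leaves implicit.
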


 \begin{proof}
Note that by definition of the Markov operator
  $$
  \big( (T\widehat{\mu})_{1}(M),\dots,(T\widehat{\mu})_{k}(M)\big)=
 \widehat{p}\,P, \quad
 \mbox{ where $\widehat{p}=(\mu_{1}(M),\dots,\mu_{k}(M))$}.
 $$
  Hence for every $n\geq 1$
 \begin{equation}\label{e.perron} 
( T^n \widehat \mu(M))=
  \big( (T^{n}\widehat{\mu})_{1}(M),\dots,(T^{n}\widehat{\mu})_{k}(M) \big)=
 \widehat{p}\,P^{n}.
  \end{equation}
  Now the lemma follows from \eqref{e.perronfrob}.
  \end{proof}

\begin{lemma}\label{l.quase}
Let $\widehat{\mu}=(\mu_1,\dots, \mu_k)\in \mathcal{M}_{1}(\widehat{M})$. Then 
for every function $g\in C^{0}(M)$ it holds
 \[
 \lim _{n\to \infty}\int g \, d (T^{n}{\widehat{\mu}})_{j}
=\int_{[j]} g\circ\pi\, d\,\mathbb{P}^{-}.
 \]
\end{lemma}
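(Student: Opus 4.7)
The plan is to reduce Lemma \ref{l.quase} to Lemma \ref{l.primitiveee} by exploiting Lemma \ref{l.perronfrob} to ``fix'' the sections of $\widehat\mu$ after sufficiently many iterates of the Markov operator $T$. The point is that Lemma \ref{l.primitiveee} requires all sections $\mu_i(M)$ to be strictly positive, a hypothesis the given $\widehat\mu$ need not satisfy; however, Lemma \ref{l.perronfrob} tells us that after $n_0$ steps the section masses $(T^{n_0}\widehat\mu)_i(M)$ are close to the positive coordinates $p_i$ of the stationary vector.

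More precisely, fix $g \in C^0(M)$ and $\varepsilon > 0$. By Lemma \ref{l.perronfrob} there exists $n_0 = n_0(\varepsilon)$ such that the measure $\widehat{\nu} \eqdef T^{n_0}\widehat\mu \in \mathcal{M}_1(\widehat M)$ has all sections $\nu_i(M) > 0$ and satisfies
\[
\max_i |\nu_i(M) - p_i| < \frac{\varepsilon}{k\|g\| + 1}.
\]
Since $\widehat\nu$ now verifies the positivity hypothesis of Lemma \ref{l.primitiveee}, applying that lemma to $\widehat\nu$ yields
\[
\limsup_n \left| \int g \, d(T^{n}\widehat\nu)_j - \int_{[j]} g\circ\pi \, d\mathbb{P}^{-}\right| \le k\|g\|\max_i |\nu_i(M) - p_i| < \varepsilon.
\]

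Next I would use the semigroup property $T^n \widehat\nu = T^{n+n_0}\widehat\mu$ and the fact that a shift of the index by a constant does not affect the value of the $\limsup$, to rewrite the previous inequality as
\[
\limsup_n \left| \int g \, d(T^{n}\widehat\mu)_j - \int_{[j]} g\circ\pi \, d\mathbb{P}^{-}\right| < \varepsilon.
\]
Letting $\varepsilon \to 0$ forces this $\limsup$ to equal zero, which is precisely the claimed convergence.

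There is no serious obstacle here: the argument is a standard absorption trick, with the only care being to verify that $T^{n_0}\widehat\mu$ remains a probability in $\mathcal{M}_1(\widehat M)$ (which follows since $T$ preserves total mass by \eqref{desintegracao} and the definition of $T$) and to invoke the shift-invariance of $\limsup$ in $n$. Once Lemma \ref{l.quase} is established, combining it with \eqref{e.3101} and \eqref{desintegracao} immediately yields Theorem \ref{t.p.attractingmarkov} and hence Theorem \ref{stable}.
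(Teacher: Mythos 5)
Your argument is exactly the one in the paper: apply Lemma~\ref{l.perronfrob} to produce $n_0$ so that $T^{n_0}\widehat\mu$ has positive sections and masses close to $\bar p$, apply Lemma~\ref{l.primitiveee} to that measure, and use shift-invariance of $\limsup$ together with Lemma~\ref{l.perronfrob} again to push the bound to $0$. The paper phrases the final step as ``for every $n_1\ge n_0$'' and lets $n_1\to\infty$ rather than an explicit $\varepsilon$-argument, but the content is identical.
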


\begin{proof}
By Lemma~\ref{l.perronfrob}, we can apply
 Lemma \ref{l.primitiveee} to the measure $T^{n_{1}}({\widehat{\mu}})$ 
 for every $n_{1}\geq n_{0}$, 
 obtaining  the following inequality for every $g\in C^{0}(M)$,
$$
\limsup_{n}\left|\int g \, d (T^{n+n_{1}}{\widehat{\mu}})_{j}
-\int_{[j]} g\circ\pi\, d\,\mathbb{P}^{-} \right|\leq k\, \Vert g\Vert\, \max_{i}
|(T^{n_{1}}\widehat{\mu})_{i}(M)-p_i|.
$$
 It follows from the definition of $\limsup$ and  
 the previous inequality  that 
$$
 \limsup_{n}\left|\int g \, d (T^{n}{\widehat{\mu}})_{j}
-\int_{[j]} g\circ\pi\, d\,\mathbb{P}^{-} \right|\leq k
\, \Vert g\Vert \,
\max_{i}
|(T^{n_{1}}\widehat{\mu})_{i}(M)-p_i|
 $$
 for every $n_{1}\geq n_{0}$. 
 The lemma now follows from Lemma~\ref{l.perronfrob}.
 \end{proof}

 To get the limit in the proposition 
  take  $\widehat{g}=\langle g_{1},\dots,g_{k} \rangle$,
 apply  Lemma \ref{l.quase}
 to the maps $g_{i}$,  and use \eqref{e.3101}
 to get
 $$
 \lim_{n\to \infty}\int \widehat g\, d \,T^{n}{\widehat{\nu}}
 \underset{\tiny{\mbox{\eqref{e.3101}}}}{=}
\sum_{j=1}^{k}\lim_{n\to \infty}  \int g_{j}
\,d \, (T^{n}{\widehat{\nu}})_{j}
 \underset{\tiny{\mbox{L. \ref{l.quase}}}}{=}
\sum_{j=1}^{k}\int_{[j]} g_{j}\circ\pi\,d\,\mathbb{P}^{-}.
$$
Now observing that $g_{j}\circ\pi(\xi)=\widehat g\circ \widehat \pi (\xi)$ for every $\xi\in[j]$,
we conclude that 
$$
 \lim_{n\to \infty}\int \widehat g\, d \,T^{n}({\widehat{\nu}})
 =\sum_{j=1}^{k}\int_{[j]} \widehat g\circ \widehat \pi \, d\, \mathbb{P}^{-}=\int \widehat g\, 
 d\widehat \pi_{*}\mathbb{P}^{-},
 $$
 ending the proof of Theorem~\ref{t.p.attractingmarkov}.
\end{proof}

\subsection{Proof of Corollary \ref{ergodictheorem} }
We first see that
 the coding map defined in 
\eqref{coding} is
the unique invariant map  of the random product (recall the definition in 
 \eqref{f1omega}). 
 
 Observe that by the definition of $\pi$ we have
 \begin{equation}\label{inv}
 f_{\omega_{0}}(\pi(\sigma(\omega)))= \pi(\omega), \quad \mbox{for every $\omega\in S_{\varphi}$}.
 \end{equation}
By Theorem \ref{abundance} we have that \eqref{inv} holds $\mathbb{P}^{-}$-almost everywhere, 
getting the invariance of the coding map. 
Let us now prove that $\pi$ is the unique invariant map.
Let $\rho$ be an invariant map. Inductively we get
\begin{equation}\label{iteinv}
f_{\omega_{0}}\circ \dots \circ f_{\omega_{n}}(\rho(\sigma^{n}(\omega))=\rho(\omega) 
\end{equation}
for $\mathbb{P}^{-}$-almost everywhere. Thus, 
it is enough to consider $\omega\in S_\varphi$ satisfying \eqref{iteinv},
obtaining that 
$$
\rho(\omega)=\lim_{n\to \infty} f_{\omega_{0}}\circ \dots \circ f_{\omega_{n}}(\rho(\sigma^{n}(\omega))=\pi(\omega),
$$
 for $\mathbb{P}^{-}$-almost everywhere. Note that the existence of the limit follows from $\omega \in S_\varphi$.

We now prove the convergence result in the corollary. It follows from the proof of Theorem \ref{stable}
that for every $x\in M$ the Markov chain $ Z_{n}^{x}(\omega)=(\omega_{n-1}, X_{n}^{x}(\omega))$ has a unique stationary measure given by $\widehat{\pi}_{*}\mathbb{P}^{-}$, where $\widehat{\pi}(\omega)=(\omega_{0},\pi(\omega))$.  This allows us to apply  the Breiman ergodic theorem, see \cite{Breiman},
to obtain that for $\mathbb{P}$-almost every $\omega$ it holds
$$
 \lim_{n\to \infty}\frac{1}{n}\sum_{j=0}^{n-1}\widehat\phi( Z_{j}^{x}(\omega))=\int \widehat\phi(i,x)\,d\widehat\pi_{*}\mathbb{P}^{-}(i,x),
 $$
for every continuous function $\widehat\phi\colon \widehat X\to \mathbb{R}$. 
Hence
for $\mathbb{P}$-almost every $\omega$ we have that 
\begin{equation}\label{ergodic2}
 \lim_{n\to \infty}\frac{1}{n}\sum_{j=0}^{n-1}\widehat\phi( Z_{j}^{x}(\omega))=\int \widehat\phi(\widehat\pi(\omega))\,d\mathbb{P}^{-}(\omega)=\int \widehat\phi((\omega_{0},\pi(\omega))\,d\mathbb{P}^{-}(\omega).
\end{equation}

Now take a continuous function $\phi\colon M\to \mathbb{R}$. If we see $\phi$ as a function on $\widehat{M}$ that 
does not depend on its first coordinate then it follows from \eqref{ergodic2} that for $\mathbb{P}$-almost every $\omega$ 
we have 
$$
\lim_{n\to \infty}\frac{1}{n}\sum_{i=0}^{n-1}\phi(X_{i}^{x}(\omega))=
\int \phi (\pi(\omega)) d\mathbb{P}^- (\omega)=
\int \phi\,d(\pi_{*}\mathbb{P}^{-}),
$$
completing the proof of the corollary.
 \hfill \qed
 
\section{Synchronization of Markovian random products}
\label{s.sincro}
In this section, we prove Theorems \ref{presincronizacao} and~\ref{t.mcsincronizacao},
see Sections~\ref{s.presincronizacao}  and \ref{s.mcsincronizacao}.
In what follows, let $Y$ be metric space and 
consider a random product $\varphi(n,\omega,x)=f^{n}_{\omega}(x)$ defined on 
a compact subset  $M$ of $Y^{m}$ over an irreducible Markov
shift $(\Sigma_{k},\mathscr{F},\mathbb{P}, \sigma)$ and  suppose that $\varphi$ splits.

\subsection{An auxiliary lemma}
For every $n\geq 0$ and every $s$ we define two sequences of  random sets
$$
J^{s}_{n}(\omega)\eqdef \pi_{s}(f_{\omega_{0}}\circ \dots \circ f_{\omega_{n-1}}(M))\quad\mbox{and}\quad
 I^{s}_{n}(\omega)\eqdef
  \pi_{s}(f_{\omega_{n-1}}\circ\dots \circ f_{\omega_{0}}(M))=\pi_{s}(f_{\omega}^{n}(M)).
$$
It follows from the definition of the inverse Markov measure that 
$$
\mathbb{P}(x\in I^{s}_{n})\eqdef\mathbb{P}(\{\omega\in\Sigma_{k} \colon x\in I^{s}_{n}(\omega)\})=\mathbb{P}^{-}(\{\omega\in \Sigma_{k}\colon x\in J^{s}_{ n}(\omega)\})\eqdef \mathbb{P}^{-}(x\in J^{s}_{n}).
$$  

We begin with the following auxiliary  lemma:

\begin{lemma}\label{fastfast}
There is $N\geq 1$ and $0<\lambda<1$ such that for every $s$ we have 
$$
\mathbb{P}(x\in I^{s}_{n})\leq \lambda^{n},
\quad 
\mbox{for every $n\geq N$}
$$ 
\end{lemma}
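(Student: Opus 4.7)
\emph{Plan.} The strategy is to refine the argument of Lemma~\ref{l.limite}, extracting an \emph{exponential} rate in place of mere decay to $0$, and to observe that the resulting bound is uniform in $x$ and $s$.

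First, the assumption that some row $u$ of $P$ is strictly positive forces $p_{uu}>0$, so the chain has period $1$ at the state $u$. Combined with irreducibility this means that both $P$ and its time-reverse $Q$ are \emph{primitive}, so Claim~\ref{c.claimera52} and Remark~\ref{r.primitive} apply verbatim. Using the splitting hypothesis together with Claim~\ref{c.claimera52}, I produce $\mathbb{P}^{-}$-admissible cylinders $[\xi_{0}\dots\xi_{N-1}]$ and $[\eta_{0}\dots\eta_{N-1}]$ with $\xi_{0}=\eta_{0}$, $\xi_{N-1}=\eta_{N-1}$, satisfying condition~\eqref{injective} \emph{simultaneously} for every projection $\pi_{s}$. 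After swapping the two cylinders if necessary so that $W\eqdef[\xi_{0}\dots\xi_{N-1}]$ has the smaller $\mathbb{P}^{-}$-measure, Proposition~\ref{p.l.menorouigual} and \eqref{e.inclusionSn} yield
\[
\mathbb{P}(x\in I^{s}_{n})=\mathbb{P}^{-}(S^{x}_{n}(s))\leq \mathbb{P}^{-}(S^{x}_{\ell N}(s))\leq \mathbb{P}^{-}(\Sigma^{W}_{\ell}),\qquad \ell N\leq n,
\]
and, crucially, the right-hand side is independent of $x$ and $s$.

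The heart of the proof is the exponential bound $\mathbb{P}^{-}(\Sigma^{W}_{\ell})\leq C_{0}\rho^{\ell}$ for some $\rho<1$. Consider the aggregated process $Y_{i}\eqdef(\omega_{iN},\ldots,\omega_{(i+1)N-1})$ under $\mathbb{P}^{-}$. It is a Markov chain on the finite set $V$ of $\mathbb{P}^{-}$-admissible length-$N$ blocks, with transition matrix $M$ inherited from $Q$; since $Q$ is primitive so is $Q^{N}$, hence so is $M$. The event $\Sigma^{W}_{\ell}$ is exactly $\{Y_{i}\neq W\text{ for }0\leq i\leq \ell-1\}$. Let $\widetilde{M}$ be the substochastic matrix on $V\setminus\{W\}$ obtained by deleting the row and column corresponding to $W$ (equivalently, by killing the $Y$-chain at its first visit to $W$). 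By the Perron--Frobenius theorem its spectral radius $\rho(\widetilde{M})$ is an eigenvalue, and $\rho(\widetilde{M})<1$: otherwise a Perron eigenvector of $\widetilde{M}$ would produce a second invariant probability measure of $M$ supported off $W$, contradicting primitivity of $M$ together with $\mathbb{P}^{-}(W)>0$. Standard estimates then give $\mathbb{P}^{-}(\Sigma^{W}_{\ell})\leq C_{0}\rho^{\ell}$ for some constant $C_{0}$.

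Combining the two displays, for every $n\geq N$ set $\ell\eqdef\lfloor n/N\rfloor\geq 1$, so that $\ell\geq n/N-1$ and
\[
\mathbb{P}(x\in I^{s}_{n})\leq C_{0}\rho^{\ell}\leq C_{0}\rho^{-1}\bigl(\rho^{1/N}\bigr)^{n}.
\]
Fixing $\lambda\in(\rho^{1/N},1)$ and enlarging $N$ if necessary to absorb the constant $C_{0}\rho^{-1}$ into $(\lambda/\rho^{1/N})^{n}$ yields $\mathbb{P}(x\in I^{s}_{n})\leq \lambda^{n}$ for every $n\geq N$, uniformly in $x$ and $s$.

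\emph{Main obstacle.} The only genuinely new ingredient compared with Lemma~\ref{l.limite} is the exponential decay $\mathbb{P}^{-}(\Sigma^{W}_{\ell})\leq C_{0}\rho^{\ell}$. The ergodic-theoretic argument used there gives only $\mathbb{P}^{-}(\Sigma^{W}_{\infty})=0$; upgrading this to an exponential rate requires the quantitative spectral input that the killed matrix on $V\setminus\{W\}$ has spectral radius strictly less than $1$. Pinning down this strict inequality (which follows from primitivity of $M$ and $\mathbb{P}^{-}(W)>0$ via Perron--Frobenius) is the delicate point.
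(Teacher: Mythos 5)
Your proof is correct, but it takes a genuinely different route from the paper's. You first observe that the assumption $p_{uj}>0$ for all $j$ forces $p_{uu}>0$, hence aperiodicity, hence primitivity of $P$ (and thus of $Q$); this lets you re-use Claim~\ref{c.claimera52} off the shelf and then obtain the exponential decay of $\mathbb{P}^{-}(\Sigma^{W}_{\ell})$ by passing to the $N$-blocked chain under $\mathbb{P}^{-}$ and invoking the standard Perron--Frobenius fact that the substochastic matrix obtained by killing an irreducible finite chain at one state has spectral radius strictly less than $1$. The paper instead proves a new claim (Claim~\ref{c.flavor}) tailored to this lemma: using irreducibility and the full row $u$ it produces splitting cylinders whose first symbol $\xi_0$ equals $u$, so that $q_{j\xi_0}>0$ for every $j$; this extra positivity then feeds a short self-contained induction (Claim~\ref{fast}) giving $\mathbb{P}^{-}(\Sigma_{\ell}^{W})\leq(1-\rho_0)^{\ell}$ directly, with $\rho_0=\min\{\mathbb{P}^{-}(W),\inf_j q_{j\xi_0}q_{\xi_0\xi_1}\cdots q_{\xi_{N-2}\xi_{N-1}}\}$. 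Your version imports slightly heavier machinery (spectral radius of the killed chain) but avoids the ad hoc construction of a cylinder anchored at $u$, and your observation that the hypotheses already imply primitivity is a clean simplification that the paper does not exploit; the paper's version is more elementary and keeps the constant and rate fully explicit. Both the reduction $\mathbb{P}(x\in I^{s}_{n})=\mathbb{P}^{-}(S^{x}_{n}(s))\leq\mathbb{P}^{-}(\Sigma^{W}_{\ell})$ via Proposition~\ref{p.l.menorouigual} and the final absorption of the multiplicative constant by enlarging $N$ and choosing $\lambda\in(\rho^{1/N},1)$ are handled correctly and are uniform in $x$ and $s$, as required.
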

\begin{proof}

We need the following claim that has the same flavor as  Claim~\ref{c.claimera52}.

\begin{claim}\label{c.flavor}
There are $\mathbb{P}^{-}$ admissible cylinders 
  $[\xi_{0}\dots \xi_{N-1}]$ and $[\eta_{0}\dots \eta_{N-1}]$ such that
$\xi_{0}=\eta_{0}$, $\xi_{N-1}=\eta_{N-1}$, and $p_{\xi_0 j}>0$ for all $j$, 
satisfying 
$$
 \pi_{s}(f_{\omega}^{n}( f_{\xi_{0}}\circ\cdots\circ f_{\xi_{N-1}}(M)))\cap 
\pi_{s}(f_{\omega}^{n}(f_{\eta_{0}}\circ\cdots\circ f_{\eta_{N-1}}(M)))=\emptyset
$$
for every $n\geq 0$, every $s$, and every $\omega$.
\end{claim}
\begin{proof}
By the splitting hypothesis, there is a pair of $\mathbb{P}$-admissible cylinders $[a_{1}\dots a_{\ell}]$ and 
 $[b_{1}\dots b_{r}]$ with $a_{\ell}=b_{r}$ such that for every $u$ it holds
 \begin{equation}\label{splitsplit}
\pi_{s}(f_{\omega}^{n}( f_{a_{\ell}}\circ\dots\circ f_{a_{1}}(M)))\cap \pi_{s}(f_{\omega}^{n}(f_{b_{r}}\circ\dots\circ f_{b_{1}}(M)))=\emptyset
 \end{equation}
for every $n\geq 0$, every $s$, and every $\omega$.
By hypothesis, there is $u$ such that $p_{uj}>0$ for every $j$, thus we can assume that $r=\ell$ and $b_{1}=a_{1}=u$. Since the transition matrix $P$ is irreducible there is 
a finite sequence $c_{1}\dots c_{\ell_{0}}$ such that the cylinder $[uc_{1}\dots c_{\ell_{0}}a_{\ell}]$ is $\mathbb{P}^{-}$-
admissible. Consider the two $\mathbb{P}^{-}$- admissible cylinders defined by
  $$
  [\xi_{0}\dots \xi_{N-1}]\eqdef[uc_{1}\dots c_{\ell_{0}} a_{\ell}\dots a_{1}] \quad\mbox{and} \quad
[\eta_{0}\dots \eta_{N-1}]\eqdef[uc_{1}\dots c_{\ell_{0}} b_{r}\dots b_{1}].
$$  
By construction, $u=\xi_{0}=\eta_{0}$ and $\xi_{N-1}=\eta_{N-1}$. It follows from the splitting hypothesis 
\eqref{splitsplit}
that 
$$
 \pi_{s}(f_{\omega}^{n}(f_{\xi_{0}}\circ\cdots\circ f_{\xi_{N-1}}(M)))\cap 
\pi_{s}(f_{\omega}^{n}(f_{\eta_{0}}\circ\cdots\circ f_{\eta_{N-1}}(M)))=\emptyset,
$$
proving the claim.
\end{proof}

Consider the two
$\mathbb{P}^{-}$ admissible cylinders 
  $[\xi_{0}\dots \xi_{N-1}]$ and $[\eta_{0}\dots \eta_{N-1}]$ given by the claim.
Assume that $0<\mathbb{P}^-( [\xi_{0}\dots \xi_{N-1}])\le \mathbb{P}^-( [\eta_{0}\dots \eta_{N-1}])$.
It follows from Proposition~\ref{p.l.menorouigual} that the cylinder $W=
[\xi_{0}\dots \xi_{N-1}]$ satisfies 
\begin{equation}\label{uni}
\mathbb{P}^{-}( x\in J_{\ell N}^{s})\leq \mathbb{P}^{-}(\Sigma_{\ell}^{W}), 
\quad 
\mbox{for every $\ell\geq 1$ and every $s$,}
\end{equation}
 where $\Sigma_{\ell}^{W}$ is the set defined in \eqref{particao}.
Next claim states that $(\mathbb{P}^{-}(\Sigma_{\ell}^{W}))_\ell$ converges exponentially fast to $0$
as $\ell\to \infty$.
\begin{claim}\label{fast}
There is $\lambda_{0}<1$ such that
$\mathbb{P}^{-}(\Sigma_{\ell}^{W})\leq \lambda_{0}^{\ell}$
 for every $\ell\geq 1$.
 \end{claim}
\begin{proof}
Let $Q=(q_{ij})$ and $\bar p=(p_{1},\dots,p_{k})$ be the transition matrix and the stationary 
probability vector, respectively, that determine the measure $\mathbb{P}^{-}$.
 Let 
$$
\rho\eqdef\inf_{j} q_{j\xi_{0}}q_{\xi_{0}\xi_{1}}\dots q_{\xi_{N-2}\xi_{N-1}}
\quad \mbox{and}\quad
\rho_{0}\eqdef\min\{\rho,\mathbb{P}^{-}(W)\}.
$$
Note that since 
  $p_{\xi_0 j}>0$ (and hence $q_{j \xi_0}>0$, recall \eqref{e.Q}) for all $j$ we have that $\rho>0$ 
and hence $0<\rho_{0}<1$. 
Note that 
$$
\mathbb{P}^{-}(\Sigma_{1}^{W})=1-\mathbb{P}^{-}(W)\leq 1-\rho_{0}.
$$ 
 Suppose, by induction, that $\mathbb{P}^{-}(\Sigma_{n}^{W})\leq (1-\rho_{0})^{n}$ for every 
 $1\le n\le \ell$.

%
%

Given   $C=[a_{0}\ldots a_{\ell N-1}]$ consider the concatenated
cylinder $C\ast [\xi_{0}\dots \xi_{N-1}]=[a_{0}\ldots a_{\ell N-1}\xi_{0}\dots \xi_{N-1}]$. 
Note that by  the definitions of $C$ and $\rho_0$ we have
$$
\mathbb{P}^- (C\ast [\xi_{0}\dots \xi_{N-1}] )\ge \rho_0 \mathbb{P}^- (C).
$$
Recall  the definition of the cylinders $E_\ell$ in \eqref{e.Eell}. By definition 
and \eqref{e.twoidentities},
 $$
 \Sigma_{\ell +1}^{W}=\bigcup_{C\in E^{\ell}}(C -C\ast [\xi_{0}\dots \xi_{N-1}]).
 $$ 
Since the above union is disjoint, we have that
\[
 \begin{split}
 \mathbb{P}^{-}(\Sigma_{\ell +1}^{W})&=\sum_{{C\in E^{\ell}}}\
 \mathbb{P}^{-}(C)-\mathbb{P}^{-}(C\ast [\xi_{0}\dots \xi_{N-1}])\\
 &\leq  \sum_{{C\in E^{\ell}}}\
 \mathbb{P}^{-}(C)-\mathbb{P}^{-}(C)\rho_{0}\\
 &=(1-\rho_{0})\sum_{{C\in E^{\ell}}}
 \mathbb{P}^{-}(C)\leq (1-\rho_{0})^{\ell +1}.
 \end{split}
 \]
 The claim now follows taking $\lambda_{0}=1-\rho_{0}\in (0,1)$. 
%
 \end{proof}

 We are now ready to conclude the proof of the lemma. 
 Let $\lambda_{1}=\lambda_{0}^{\frac{1}{N}}$, where  $\lambda_{0}$ is as in Claim \ref{fast}.
 From \eqref{uni} and Claim~\ref{fast} it follows that
 $$
 \mathbb{P}^{-}( x\in J_{\ell N}^{s})\leq \lambda_{1}^{\ell N}, \quad
 \mbox{for every $\ell\geq 1$.}
 $$
 Now take $\lambda<1$ with $\lambda_{1}\leq \lambda^{N}<\lambda$.
 We claim that 
$$
\mathbb{P}^{-}( x\in I_{ n}^{s})\leq \lambda^{n}, \quad 
\mbox{for every 
$n\geq N$.} 
$$
To see why this is so given any $n\geq N$ write $n=\ell N+r$ for some integer $\ell \geq 1 $ 
and $r\in \{0,\dots ,N-1\}$. Thus, observing that $J_{ n+1}^{s}(\omega) \subset J_n^{s}(\omega)$,  we have 
$$
\mathbb{P} ( x\in I_{ n}^{s})= \mathbb{P}^{-}( x\in J_{ n}^{s})\leq
\mathbb{P}^{-}(x\in J_{\ell N}^{s}) \leq \lambda_{1}^{\ell N}\leq 
\lambda^{\ell N-1}\lambda^{r+1}=\lambda^{n},
$$
proving  the lemma.
\end{proof}

\subsection{Proof of Theorem~\ref{presincronizacao}}
\label{s.presincronizacao}
Let $\mu$ be a probability measure defined on $Y$.
To prove the theorem we need to see that there is $q<1$ such that
for $\mathbb{P}$-almost every $\omega$ there is constant 
$C=C(\omega)$ such that
 $$
\mu (I^{s}_{n}(\omega))\leq C q^{n}, \quad
\mbox{for every $n\geq 1$}.
$$
Applying Fubini's theorem and Lemma \ref{fastfast} we get 
$$
\mathbb{E}(\mu( (I_{n}^{s})))\eqdef\int \mu(I_{n}^{s}(\omega))\, d\mathbb{P}=
\int\mathbb{P}( x\in I_{n}^{s})\, d\mu \leq \lambda^{n},
$$
for every $n\geq N$, where $N$ and $\lambda$ are as in Lemma \ref{fastfast}.

In particular, taking  any $q<1$ with $\lambda<q$ and applying the 
Monotone Convergence Theorem we have that 
$$
\displaystyle\int\sum _{n=1}^{\infty}\frac{ \mu (I_{n}^{s}(\omega))}{q^{n}}\, d\mathbb{P}=
\displaystyle\sum _{n=1}^{\infty}\frac{\mathbb{E}(\mu (I_{n}^{s}))}{q^{n}}<\infty.
$$ 
It follows that 
$$
\sum _{n=1}^{\infty}\frac{ \mu(I_{n}^{s}(\omega))}{q^{n}}< \infty,
$$
 for 
$\mathbb{P}$-almost every $\omega$.
Therefore for $\mathbb{P}$-almost every $\omega$ there 
is $C_{s}=C_{s}(\omega)$ such that 
$$
\mu(I_{n}^{s}(\omega))\leq C q^{n}.
$$ 
 To complete the proof of the theorem note that 
 $\mu ( \pi_{s}(\,f^{n}_{\omega}(M)))=\mu(I^{s}_{n}(\omega))$.
 \hfill \qed

\subsection{Proof of Theorem~\ref{t.mcsincronizacao}}
\label{s.mcsincronizacao}
To prove the theorem  observe that for any sub-interval $J$ of  $[0,1]$ we have that $\text{diam}\, J=m(J)$, where $m$ is the Lebesgue measure on $[0,1]$. Recall that we are considering 
diameter with respect to the metric $d(x,y)=\sum_{i}|x_{i}-y_{i}| $. Hence for every $X\subset \mathbb{R}^{m}$ we have that 
\begin{equation}\label{diam}
\mbox{diam} (X) \leq \sum_{s=1}^{m}\mbox{diam}\,(\pi_{s}(X)).
\end{equation}

Now it  is enough to apply Theorem~\ref{presincronizacao} to the Lebesgue measure on $\mathbb{R}$. Then 
 for every $s$ there is a set $\Omega_{s}$ with $\mathbb{P}(\Omega_{s})=1$ such that for every  $\omega\in\Omega_{s}$ there is  $C_{s}(\omega)>0$ such that
\begin{equation}\label{ssss}
\mbox{diam} ( \,\pi_{s}(f^{n}_{\omega}(M)))\leq C_{s}(\omega) q^{n}, \quad \mbox{for every} \quad n\geq 1,
\end{equation}
for some constant $0<q<1$ independent of $s$.
Let   $\Omega_0 \eqdef \bigcap \Omega_{s}$.
 Given  $\omega\in \Omega_0$ define
 $C(\omega)\eqdef\max_{s}C_{s}(\omega)$. Then it follows from \eqref{diam} and \eqref{ssss} that 
$$
\mbox{diam}(f^{n}_{\omega}(M))\leq C(\omega)q^{n},
\quad
\mbox{for every $\omega\in \Omega_{0}$}. 
$$
Since $\mathbb{P}(\Omega_{0})=1$ the theorem follows. 
\hfill \qed

\section{The splitting condition}
\label{ss.splittingcondition}
In this section, we prove Theorem \ref{specialclass}.
Given $(t_{1},\dots,t_{m})\in \{+,-\}^{m}$
consider the subset $A=A(t_{1},\dots,t_{m})$
of $\mathbb{R}^m\times \mathbb{R}^m$  
defined as follows. A point $(x,y)\in \mathbb{R}^m\times \mathbb{R}^m$ belongs to
$A(t_{1},\dots,t_{m})$ if and only if
\begin{itemize}
\item 
$\pi_s(x)< \pi_s(y)$ if $t_{s}=+$ and
\item  
$\pi_s(x)> \pi_s(y)$ if $t_{s}=-$.
\end{itemize}
Recall also the definition of 
$S_{M}(t_{1},\dots t_{m})$ in Section~\ref{sss.monotone}.

\begin{claim}
\label{tclaim}
Let $f\in S_{M}(t_{1},\dots t_{m})$.
\begin{itemize}
\item
If $t_{1}=+$ then for every $n\geq 0$ it holds
$$
(x,y)\in A \implies (f_{\omega}^{n}(x),f_{\omega}^{n}(y))\in A
$$
\item
If $t_{1}=-$ then for every $n\geq0$ it holds
$$
(x,y)\in A \implies (f_{\omega}^{2n}(x),f_{\omega}^{2n}(y))\in A\quad \mbox{and}\quad
(f_{\omega}^{2n+1}(y),f_{\omega}^{2n+1}(x))\in A.
$$
\end{itemize}
\end{claim}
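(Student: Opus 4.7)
The plan is to establish, as a one-step lemma, that a single map $f=(f^1,\ldots,f^m)\in\mathcal{S}_M(t_1,\ldots,t_m)$ acts on a pair $(x,y)\in A$ as follows: when $t_1=+$ one has $(f(x),f(y))\in A$, while when $t_1=-$ one has $(f(y),f(x))\in A$. Once this one-step statement is in hand, both bullets of Claim~\ref{tclaim} follow by induction on $n$: for $t_1=+$ the set $A$ is preserved by each $f_i$ in the random product, so $(f_\omega^n(x),f_\omega^n(y))\in A$ for all $n$; for $t_1=-$ each factor swaps the order of the pair in $A$, so after $2n$ factors the pair is back in $A$ with its original order and after $2n+1$ factors with the components interchanged.

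To prove the one-step statement, fix a coordinate $s$ and use the telescoping identity
\[
f^s(y)-f^s(x)=\sum_{j=1}^m\bigl[f^s(y_1,\ldots,y_j,x_{j+1},\ldots,x_m)-f^s(y_1,\ldots,y_{j-1},x_j,x_{j+1},\ldots,x_m)\bigr],
\]
whose $j$-th summand depends only on the two scalars $y_j$ and $x_j$. By the definition of $\mathcal{S}_M(t_1,\ldots,t_m)$, the coordinate function $f^s$ is of type $(t_1,\ldots,t_m)$ when $t_s=t_1$ and of the pointwise flipped type when $t_s\neq t_1$; in particular the monotonicity direction of $f^s$ in its $j$-th variable is known explicitly in terms of $t_j$, $t_s$, and $t_1$. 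The definition of $A$ likewise fixes the sign of $y_j-x_j$ in terms of $t_j$. A direct case analysis over the four combinations $(t_1,t_s)\in\{+,-\}^2$ shows that in every case all $m$ summands carry the same sign, and this common sign is exactly the one required to place $(f(x),f(y))$ (when $t_1=+$) or $(f(y),f(x))$ (when $t_1=-$) in $A$ with respect to the $s$-th coordinate.

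I do not expect a substantial obstacle; the argument is essentially sign bookkeeping. The only mildly delicate point is the ``doubly flipped'' case $t_1=-$, $t_s=+$, in which both the monotonicity pattern of $f^s$ is reversed (because $t_s\neq t_1$) and the target inequality $\pi_s(f(y))<\pi_s(f(x))$ is the reverse of $\pi_s(f(x))<\pi_s(f(y))$. The whole argument goes through precisely because these two reversals compose to reproduce the same sign pattern as in the ``direct'' case $t_1=+$, $t_s=+$; this is the reason for the two-clause form of the definition of $\mathcal{S}_M(t_1,\ldots,t_m)$, and checking it is the main (but routine) piece of work.
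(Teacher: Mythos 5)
Your proof is correct and takes essentially the same approach as the paper's: both reduce the claim to the one-step fact that a single $f\in\mathcal{S}_M(t_1,\dots,t_m)$ sends a pair in $A$ to a pair in $A$ when $t_1=+$, and to the reflected pair when $t_1=-$, and then conclude by iterating along the random composition. The paper simply asserts this one-step fact and leaves the sign bookkeeping to the reader, while you make it explicit via the coordinate-wise telescoping decomposition, which is a helpful amplification but not a different method.
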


\begin{proof}
For the first item just note that if $f\in S_{M}(t_{1},\dots,t_{m})$ then
 $(x,y)\in A$ implies that $(f(x),f(y))\in A$.

For the second item it is sufficient to note that if $f\in S_{M}(t_{1},\dots, t_{m})$ then
 $(x,y)\in A$ implies that $(f(y),f(x))\in A$.
\end{proof}

To prove the theorem first observe that if a
subset $I\times J$  of $\mathbb{R}^m \times \mathbb{R}^m$ is contained in $A(t_{1},\dots,t_{m})$ then  
\begin{equation}\label{decom}
\pi_{s}(I)\cap \pi_{s}(J)=\emptyset, \quad \mbox{for every $\pi_{s}$}. 
\end{equation}

Recall that the sets $M_{1}=
f_{a_{\ell}}\circ\dots\circ f_{a_{1}}(M)$ and $M_{2}=f_{b_{r}}\circ\dots\circ f_{b_{1}}(M)$
satisfy 
\begin{itemize}
\item 
$\pi_s(M_1)< \pi_s(M_2)$ if $t_{s}=+$ and
\item  
$\pi_s(M_1)> \pi_s(M_2)$ if $t_{s}=-$.
\end{itemize}
Hence $M_1\times M_2\subset A$.

Suppose first that $t_{1}=+$.
It follows from the first item in Claim \ref{tclaim} that for every $n\geq 0$ we have $f_{\omega}^{n}(M_{1})\times f_{\omega}^{n}(M_{2})\subset A$. Applying  
\eqref{decom} we get
that 
$$
\pi_{s}(f_{\omega}^{n}(M_{1}))\cap \pi_{s}(f_{\omega}^{n}(M_{2}))= \emptyset, 
\quad
\mbox{for every $\pi_{s}$.}
$$

We now consider the case $t_{1}=-$. It follows from the second item in Claim \ref{tclaim} that for every $n\geq 0$ we have that 
 either $f_{\omega}^{n}(M_{1})\times f_{\omega}^{n}(M_{2})\subset A$ or $f_{\omega}^{n}(M_{2})\times f_{\omega}^{n}(M_{1})\subset A$. Therefore, again from \eqref{decom}, it follows  that for every $n\geq 0$ we have that 
 $$
\pi_{s}(f_{\omega}^{n}(M_{1}))\cap \pi_{s}(f_{\omega}^{n}(M_{2}))= \emptyset,
\quad
\mbox{for every $\pi_{s}$,}
$$
ending the proof of the theorem.
\hfill \qed

\bibliographystyle{acm}
\bibliography{references}

\begin{thebibliography}{10}

\bibitem{Antonov}
{\sc Antonov, V.~A.}
\newblock Modeling of processes of cyclic evolution type. {S}ynchronization by
  a random signal.
\newblock {\em Vestnik Leningrad. Univ. Mat. Mekh. Astronom.}, vyp. 2 (1984),
  67--76.

\bibitem{Recurrent}
{\sc Barnsley, M.~F., Elton, J.~H., and Hardin, D.~P.}
\newblock Recurrent iterated function systems.
\newblock {\em Constr. Approx. 5}, 1 (1989), 3--31.

\bibitem{BhLe}
{\sc Bhattacharya, R.~N., and Lee, O.}
\newblock Asymptotics of a class of {M}arkov processes which are not in general
  irreducible.
\newblock {\em Ann. Probab. 16}, 3 (1988), 1333--1347.

\bibitem{Breiman}
{\sc Breiman, L.}
\newblock The strong law of large numbers for a class of {M}arkov chains.
\newblock {\em Ann. Math. Statist. 31\/} (1960), 801--803.


\bibitem{Dubins}
{\sc Dubins, L.~E., and Freedman, D.~A.}
\newblock Invariant probabilities for certain {M}arkov processes.
\newblock {\em Ann. Math. Statist. 37\/} (1966), 837--848.

\bibitem{Edalat}
{\sc Edalat, A.}
\newblock Power domains and iterated function systems.
\newblock {\em Inform. and Comput. 124}, 2 (1996), 182--197.

\bibitem{Fur}
{\sc Furstenberg, H.}
\newblock Noncommuting random products.
\newblock {\em Trans. Amer. Math. Soc. 108\/} (1963), 377--428.

\bibitem{Gui}
{\sc Guivarc'h, Y.}
\newblock Marches al\'eatoires \`a pas markoviens.
\newblock {\em C. R. Acad. Sci. Paris S\'er. A-B 289}, 10 (1979), A541--A543.

\bibitem{Huygens}
{\sc Hugenii~(Huygens), C.~H.}
\newblock {\em Horologium {O}scillatorium}.
\newblock Paris, France, (1673).

\bibitem{Hu}
{\sc Hutchinson, J.~E.}
\newblock Fractals and self-similarity.
\newblock {\em Indiana Univ. Math. J. 30}, 5 (1981), 713--747.

\bibitem{Kemeny}
{\sc Kemeny, J.~G., and Snell, J.~L.}
\newblock {\em Finite {M}arkov chains}.
\newblock Springer-Verlag, New York-Heidelberg, 1976.
\newblock Reprinting of the 1960 original, Undergraduate Texts in Mathematics.

\bibitem{Kifer}
{\sc Kifer, Y.}
\newblock {\em Ergodic theory of random transformations}, vol.~10 of {\em
  Progress in Probability and Statistics}.
\newblock Birkh\"auser Boston, Inc., Boston, MA, 1986.

\bibitem{Volk}
{\sc Kleptsyn, V., and Volk, D.}
\newblock Physical measures for nonlinear random walks on interval.
\newblock {\em Mosc. Math. J. 14}, 2 (2014), 339--365, 428.

\bibitem{KlepAntonov}
{\sc Kleptsyn, V.~A., and Nal'skii, M.~B.}
\newblock Convergence of orbits in random dynamical systems on a circle.
\newblock {\em Funktsional. Anal. i Prilozhen. 38}, 4 (2004), 36--54, 95--96.

\bibitem{Yuri}
{\sc Kudryashov, Y.~G.}
\newblock Bony attractors.
\newblock {\em Funktsional. Anal. i Prilozhen. 44}, 3 (2010), 73--76.

\bibitem{Letac}
{\sc Letac, G.}
\newblock A contraction principle for certain {M}arkov chains and its
  applications.
\newblock In {\em Random matrices and their applications ({B}runswick, {M}aine,
  1984)}, vol.~50 of {\em Contemp. Math.} Amer. Math. Soc., Providence, RI,
  1986, pp.~263--273.

\bibitem{Malicet}
{\sc Malicet, D.}
\newblock Random walks on {H}omeo(${S}^{1}$).
\newblock {\em arXiv:1412.8618\/}.



\bibitem{Mane}
{\sc Ma\~n\'e, R.}
\newblock {\em Ergodic theory and differentiable dynamics}, vol.~8 of {\em
  Ergebnisse der Mathematik und ihrer Grenzgebiete (3). }
\newblock Springer-Verlag, Berlin, 1987.

\bibitem{syncrhonizationbook}
{\sc Pikovsky, A., Rosenblum, M., and Kurths, J.}
\newblock {\em Synchronization}, vol.~12 of {\em Cambridge Nonlinear Science
  Series}.
\newblock Cambridge University Press, Cambridge, 2001.
\newblock A universal concept in nonlinear sciences.

\bibitem{Revuz}
{\sc Revuz, D.}
\newblock {\em Markov chains}, second~ed., vol.~11 of {\em North-Holland
  Mathematical Library}.
\newblock North-Holland Publishing Co., Amsterdam, 1984.

\bibitem{Royer}
{\sc Royer, G.}
\newblock Croissance exponentielle de produits markoviens de matrices
  al\'eatoires.
\newblock {\em Ann. Inst. H. Poincar\'e Sect. B (N.S.) 16}, 1 (1980), 49--62.

\bibitem{Anna}
{\sc Szarek, T., and Zdunik, A.}
\newblock Stability of iterated function systems on the circle.
\newblock {\em Bull. Lond. Math. Soc. 48}, 2 (2016), 365--378.

\bibitem{Virtser}
{\sc Virtser, A.~D.}
\newblock On the simplicity of the spectrum of characteristic {L}yapunov
  exponents of the product of random matrices.
\newblock {\em Teor. Veroyatnost. i Primenen. 28}, 1 (1983), 115--128.

\end{thebibliography}

\end{document}